\newtheorem{theorem}{Theorem}[section]
\newtheorem{proposition}[theorem]{Proposition}
\newtheorem{corollary}[theorem]{Corollary}
\theoremstyle{definition}
\newtheorem{definition}[theorem]{Definition}
\newtheorem{example}[theorem]{Example}
\newtheorem{remark}[theorem]{Remark}
\numberwithin{equation}{section}
\newenvironment{red}{\relax\color{red}}{\hspace*{.5ex}\relax}
\newcommand{\ber}{\begin{red}}
\newcommand{\er}{\end{red}}
\def\seteq{\mathbin{:=}}
\def\OP{\mathscr{P}}
\def\OS{\mathscr{S}}
\def\g{\mathfrak g}
\def\h{\mathfrak h}
\def\ve{\varepsilon}
\def\re{{}^\triangleright}
\def\N{\mathbb N}
\def\Z{\mathbb Z}
\def\A{\mathbb A}
\def\z{\mathsf z}
\def\U{\mathsf u}
\def\V{\mathsf v}
\def\Q{\mathbb Q}
\def\wt{{\rm wt}}
\def\het{{\rm ht}}
\def\dim{{\rm dim}}
\def\vch{\overset{\circ}{\chi}}
\newcommand{\nc}{\newcommand}
\nc{\hs}{\hspace*}
\newcommand{\dtri}{
\bsegment
\move(-1 0)\lvec(0 1)\lvec(0 0)\lvec(-1 0)\ifill f:0.7
\esegment
}
\newcommand{\dsqa}{
\bsegment
\move(-1 0)\lvec(-1 1)\lvec(0 1)\lvec(0 0)\lvec(-1 0)\ifill f:0.8
\esegment
}
\newcommand{\dhsqa}{
\bsegment
\move(-0.2 0)\lvec(-0.2 1)\lvec(0.2 1)\lvec(0.2 0)\lvec(-0.2 0)\ifill f:0.7
\esegment
}
\newcommand{\dhsqA}{
\bsegment
\move(-1 0.2)\lvec(0 0.2)\lvec(0 -0.2)\lvec(-1 -0.2)\lvec(-1 0.2)\ifill f:0.7
\esegment
}
\newcommand{\dtrj}{
\bsegment
\move(0 -1)\lvec(1 -1)\lvec(1 -0.5)\lvec(0 -0.5)\ifill f:0.7
\esegment
}
\newcommand{\ublock}{
\begin{texdraw}
\drawdim em
\setunitscale 1.0
\move(0 -0.2)
\bsegment
\move(0 0)\lvec(0 1)\lvec(1 1)\lvec(1 0)\lvec(0 0)
\move(0 1)\lvec(0.5 1.5)\lvec(1.5 1.5)\lvec(1 1)
\move(1.5 1.5)\lvec(1.5 0.5)\lvec(1 0)
\htext(2 0.6){$=$}
\move(3 0.3)\lvec(3 1.3)\lvec(4 1.3)\lvec(4 0.3)\lvec(3 0.3)
\esegment
\end{texdraw}
}
\newcommand{\hhblock}{
\begin{texdraw}
\drawdim em
\setunitscale 1.0
\move(0 -0.2)
\bsegment
\move(0 0)\lvec(0 0.5)\lvec(1 0.5)\lvec(1 0)\lvec(0 0)
\move(0 0.5)\lvec(0.5 1)\lvec(1.5 1)\lvec(1 0.5)
\move(1.5 1)\lvec(1.5 0.5)\lvec(1 0)
\htext(2 0.4){$=$}
\move(3 0.3)\lvec(3 0.8)\lvec(4 0.8)\lvec(4 0.3)\lvec(3 0.3)
\esegment
\end{texdraw}
}
\newcommand{\SPatDnpO}{
 \begin{texdraw}
\fontsize{7}{7}\selectfont
\drawdim em
\setunitscale 1.9
\move(0 0)\lvec(-4 0)\lvec(-4 0.5)\lvec(0 0.5)\ifill f:0.7
\move(0 0)\rlvec(-4.3 0) \move(0 0.5)\rlvec(-4.3 0) \move(0 1)\rlvec(-4.3 0)
\move(0 2)\rlvec(-4.3 0) \move(0 3.5)\rlvec(-4.3 0) \move(0 4.5)\rlvec(-4.3 0)
\move(0 4.5)\rlvec(-4.3 0) \move(0 5)\rlvec(-4.3 0) \move(0 5.5)\rlvec(-4.3 0)
\move(0 6.5)\rlvec(-4.3 0) \move(0 8)\rlvec(-4.3 0) \move(0 9)\rlvec(-4.3 0)
\move(0 9.5)\rlvec(-4.3 0) \move(0 10)\rlvec(-4.3 0) \move(0 11)\rlvec(-4.3 0)
\move(0 0)\rlvec(0 11.3) \move(-1 0)\rlvec(0 11.3) \move(-2 0)\rlvec(0 11.3)
\move(-3 0)\rlvec(0 11.3) \move(-4 0)\rlvec(0 11.3)
\htext(-0.6 0.1){$n$} \htext(-1.6 0.1){$n$} \htext(-2.6 0.1){$n$}\htext(-3.6 0.1){$n$}
\htext(-0.6 0.6){$n$} \htext(-1.6 0.6){$n$} \htext(-2.6 0.6){$n$} \htext(-3.6 0.6){$n$}
\htext(-0.9 1.35){$n\!\!-\!\!1$} \htext(-1.9 1.35){$n\!\!-\!\!1$}
\htext(-2.9 1.35){$n\!\!-\!\!1$} \htext(-3.9 1.35){$n\!\!-\!\!1$}
\vtext(-0.6 2.6){$\cdots$} \vtext(-1.6 2.6){$\cdots$}\vtext(-2.6 2.6){$\cdots$} \vtext(-3.6 2.6){$\cdots$}
\htext(-0.6 3.85){$1$} \htext(-1.6 3.85){$1$}\htext(-2.6 3.85){$1$} \htext(-3.6 3.85){$1$}
\htext(-0.6 4.6){$0$} \htext(-1.6 4.6){$0$} \htext(-2.6 4.6){$0$}\htext(-3.6 4.6){$0$}
\htext(-0.6 5.1){$0$} \htext(-1.6 5.1){$0$} \htext(-2.6 5.1){$0$} \htext(-3.6 5.1){$0$}
\htext(-0.6 5.85){$1$} \htext(-1.6 5.85){$1$} \htext(-2.6 5.85){$1$} \htext(-3.6 5.85){$1$}
\vtext(-0.6 7.1){$\cdots$} \vtext(-1.6 7.1){$\cdots$} \vtext(-2.6 7.1){$\cdots$} \vtext(-3.6 7.1){$\cdots$}
\htext(-0.9 8.35){$n\!\!-\!\!1$} \htext(-1.9 8.35){$n\!\!-\!\!1$}
\htext(-2.9 8.35){$n\!\!-\!\!1$} \htext(-3.9 8.35){$n\!\!-\!\!1$}
\htext(-0.6 9.1){$n$} \htext(-1.6 9.1){$n$} \htext(-2.6 9.1){$n$} \htext(-3.6 9.1){$n$}
\htext(-0.6 9.6){$n$} \htext(-1.6 9.6){$n$} \htext(-2.6 9.6){$n$} \htext(-3.6 9.6){$n$}
\htext(-0.9 10.35){$n\!\!-\!\!1$} \htext(-1.9 10.35){$n\!\!-\!\!1$}
\htext(-2.9 10.35){$n\!\!-\!\!1$} \htext(-3.9 10.35){$n\!\!-\!\!1$}
\end{texdraw}
}
\newcommand{\FPatDnpO}{
\begin{texdraw}
\fontsize{7}{7}\selectfont
\drawdim em
\setunitscale 1.9
\move(0 0)\lvec(-4 0)\lvec(-4 0.5)\lvec(0 0.5)\ifill f:0.7
\move(0 0)\rlvec(-4.3 0) \move(0 0.5)\rlvec(-4.3 0) \move(0 1)\rlvec(-4.3 0)
\move(0 2)\rlvec(-4.3 0) \move(0 3.5)\rlvec(-4.3 0) \move(0 4.5)\rlvec(-4.3 0)
\move(0 4.5)\rlvec(-4.3 0) \move(0 5)\rlvec(-4.3 0) \move(0 5.5)\rlvec(-4.3 0)
\move(0 6.5)\rlvec(-4.3 0) \move(0 8)\rlvec(-4.3 0) \move(0 9)\rlvec(-4.3 0)
\move(0 9.5)\rlvec(-4.3 0) \move(0 10)\rlvec(-4.3 0) \move(0 11)\rlvec(-4.3 0)
\move(0 0)\rlvec(0 11.3) \move(-1 0)\rlvec(0 11.3) \move(-2 0)\rlvec(0 11.3)
\move(-3 0)\rlvec(0 11.3) \move(-4 0)\rlvec(0 11.3)
\htext(-0.6 0.1){$0$} \htext(-1.6 0.1){$0$} \htext(-2.6 0.1){$0$} \htext(-3.6 0.1){$0$}
\htext(-0.6 0.6){$0$} \htext(-1.6 0.6){$0$}\htext(-2.6 0.6){$0$} \htext(-3.6 0.6){$0$}
\htext(-0.6 1.35){$1$} \htext(-1.6 1.35){$1$} \htext(-2.6 1.35){$1$} \htext(-3.6 1.35){$1$}
\vtext(-0.6 2.6){$\cdots$} \vtext(-1.6 2.6){$\cdots$} \vtext(-2.6 2.6){$\cdots$} \vtext(-3.6 2.6){$\cdots$}
\htext(-0.9 3.85){$n\!\!-\!\!1$} \htext(-1.9 3.85){$n\!\!-\!\!1$}\htext(-2.9 3.85){$n\!\!-\!\!1$} \htext(-3.9 3.85){$n\!\!-\!\!1$}
\htext(-0.6 4.6){$n$} \htext(-1.6 4.6){$n$} \htext(-2.6 4.6){$n$}\htext(-3.6 4.6){$n$}
\htext(-0.6 5.1){$n$} \htext(-1.6 5.1){$n$} \htext(-2.6 5.1){$n$} \htext(-3.6 5.1){$n$}
\htext(-0.9 5.85){$n\!\!-\!\!1$} \htext(-1.9 5.85){$n\!\!-\!\!1$}\htext(-2.9 5.85){$n\!\!-\!\!1$} \htext(-3.9 5.85){$n\!\!-\!\!1$}
\vtext(-0.6 7.1){$\cdots$} \vtext(-1.6 7.1){$\cdots$}\vtext(-2.6 7.1){$\cdots$} \vtext(-3.6 7.1){$\cdots$}
\htext(-0.6 8.35){$1$} \htext(-1.6 8.35){$1$} \htext(-2.6 8.35){$1$} \htext(-3.6 8.35){$1$}
\htext(-0.6 9.1){$0$} \htext(-1.6 9.1){$0$} \htext(-2.6 9.1){$0$} \htext(-3.6 9.1){$0$}
\htext(-0.6 9.6){$0$} \htext(-1.6 9.6){$0$} \htext(-2.6 9.6){$0$} \htext(-3.6 9.6){$0$}
\htext(-0.6 10.35){$1$} \htext(-1.6 10.35){$1$} \htext(-2.6 10.35){$1$} \htext(-3.6 10.35){$1$}
\end{texdraw}
}
\newcommand{\Sevenone}{
\begin{texdraw}
\drawdim em
\move(0 -0.2)
\bsegment
\move(0 0)\lvec(1 0)\lvec(1 1)\lvec(0 1)\lvec(0 0)
\htext(0.3 0.15){$7$}
\esegment
\end{texdraw}
}
\newcommand{\Seventwo}{
\begin{texdraw}
\drawdim em
\move(0 -0.2)
\bsegment
\move(0 0)\lvec(1 0)\lvec(1 1)\lvec(0 1)\lvec(0 0)
\move(1 0)\lvec(2 0)\lvec(2 1)\lvec(1 1)\lvec(1 0)
\htext(0.3 0.15){$7$}\htext(1.3 0.15){$7$}
\esegment
\end{texdraw}
}
\newcommand{\Seventhree}{
\begin{texdraw}
\drawdim em
\move(0 -0.2)
\bsegment
\move(0 0)\lvec(1 0)\lvec(1 1)\lvec(0 1)\lvec(0 0)
\move(1 0)\lvec(2 0)\lvec(2 1)\lvec(1 1)\lvec(1 0)
\move(2 0)\lvec(3 0)\lvec(3 1)\lvec(2 1)\lvec(2 0)
\htext(0.3 0.15){$7$}\htext(1.3 0.15){$7$}\htext(2.3 0.15){$7$}
\esegment
\end{texdraw}
}
\newcommand{\Sevenfour}{
\begin{texdraw}
\drawdim em
\move(0 -1)
\bsegment
\move(0 0)\lvec(1 0)\lvec(1 1)\lvec(0 1)\lvec(0 0)
\move(1 0)\lvec(2 0)\lvec(2 1)\lvec(1 1)\lvec(1 0)
\move(2 0)\lvec(3 0)\lvec(3 1)\lvec(2 1)\lvec(2 0)
\move(3 0)\lvec(4 0)\lvec(4 1)\lvec(3 1)\lvec(3 0)
\htext(0.3 0.15){$7$}\htext(1.3 0.15){$7$}\htext(2.3 0.15){$7$}\htext(3.3 0.15){$7$}
\esegment
\end{texdraw}
}
\newcommand{\modularone}{
\begin{texdraw}
\drawdim em
\setunitscale 1.2
\move(7 3)\dsqa
\move(7 2)\dsqa
\move(7 1)\dsqa
\move(7 0)\dsqa
\move(8 0)\dsqa
\move(9 0)\dsqa
\move(10 0)\dsqa
\move(0 0)\lvec(10 0)\lvec(10 1)\lvec(0 1)\lvec(0 0)
\move(2 1)\lvec(2 2)\lvec(10 2)\lvec(10 1)
\move(3 2)\lvec(3 3)\lvec(10 3)\lvec(10 2)
\move(6 3)\lvec(6 4)\lvec(10 4)\lvec(10 3)
\move(8 4)\lvec(8 5)\lvec(10 5)\lvec(10 4)
\move(9 0)\lvec(9 5)
\move(8 0)\lvec(8 4)
\move(7 0)\lvec(7 4)
\move(6 0)\lvec(6 3)
\move(5 0)\lvec(5 3)
\move(4 0)\lvec(4 3)
\move(3 0)\lvec(3 2)
\move(2 0)\lvec(2 1)
\move(1 0)\lvec(1 1)
\htext(0.3 0.15){$1$}
\htext(1.3 0.15){$7$}
\htext(2.3 0.15){$7$}\htext(2.3 1.15){$2$}
\htext(3.3 0.15){$7$}\htext(3.3 1.15){$7$}\htext(3.3 2.15){$1$}
\htext(4.3 0.15){$7$}\htext(4.3 1.15){$7$}\htext(4.3 2.15){$7$}
\htext(5.3 0.15){$7$}\htext(5.3 1.15){$7$}\htext(5.3 2.15){$\overline{7}$}
\htext(6.3 0.15){$7$}\htext(6.3 1.15){$7$}\htext(6.3 2.15){$7$}\htext(6.3 3.15){$7$}
\htext(7.3 0.15){$7$}\htext(7.3 1.15){$7$}\htext(7.3 2.15){$7$}\htext(7.3 3.15){$\overline{7}$}
\htext(8.3 0.15){$7$}\htext(8.3 1.15){$7$}\htext(8.3 2.15){$7$}\htext(8.3 3.15){$7$}\htext(8.3 4.15){$3$}
\htext(9.3 0.15){$7$}\htext(9.3 1.15){$7$}\htext(9.3 2.15){$7$}\htext(9.3 3.15){$7$}\htext(9.3 4.15){$5$}
\end{texdraw}
}
\newcommand{\modulartwo}{
\begin{texdraw}
\drawdim em
\setunitscale 1.2
\move(5 2)\dsqa
\move(5 1)\dsqa
\move(5 0)\dsqa
\move(0 0)\lvec(10 0)\lvec(10 1)\lvec(0 1)\lvec(0 0)
\move(2 1)\lvec(2 2)\lvec(10 2)\lvec(10 1)
\move(3 2)\lvec(3 3)\lvec(10 3)\lvec(10 2)
\move(6 3)\lvec(6 4)\lvec(10 4)\lvec(10 3)
\move(8 4)\lvec(8 5)\lvec(10 5)\lvec(10 4)
\move(9 0)\lvec(9 5)
\move(8 0)\lvec(8 4)
\move(7 0)\lvec(7 4)
\move(6 0)\lvec(6 3)
\move(5 0)\lvec(5 3)
\move(4 0)\lvec(4 3)
\move(3 0)\lvec(3 2)
\move(2 0)\lvec(2 1)
\move(1 0)\lvec(1 1)
\htext(0.3 0.15){$1$}
\htext(1.3 0.15){$7$}
\htext(2.3 0.15){$7$}\htext(2.3 1.15){$2$}
\htext(3.3 0.15){$7$}\htext(3.3 1.15){$7$}\htext(3.3 2.15){$1$}
\htext(4.3 0.15){$7$}\htext(4.3 1.15){$7$}\htext(4.3 2.15){$7$}
\htext(5.3 0.15){$7$}\htext(5.3 1.15){$7$}\htext(5.3 2.15){$\overline{7}$}
\htext(6.3 0.15){$7$}\htext(6.3 1.15){$7$}\htext(6.3 2.15){$7$}\htext(6.3 3.15){$7$}
\htext(7.3 0.15){$7$}\htext(7.3 1.15){$7$}\htext(7.3 2.15){$7$}\htext(7.3 3.15){$\overline{7}$}
\htext(8.3 0.15){$7$}\htext(8.3 1.15){$7$}\htext(8.3 2.15){$7$}\htext(8.3 3.15){$7$}\htext(8.3 4.15){$3$}
\htext(9.3 0.15){$7$}\htext(9.3 1.15){$7$}\htext(9.3 2.15){$7$}\htext(9.3 3.15){$7$}\htext(9.3 4.15){$5$}
\end{texdraw}
}
\newcommand{\modularthree}{
\begin{texdraw}
\drawdim em
\setunitscale 1.2
\move(6 3)\dsqa
\move(6 2)\dsqa
\move(6 1)\dsqa
\move(6 0)\dsqa
\move(0 0)\lvec(9 0)\lvec(9 1)\lvec(0 1)\lvec(0 0)
\move(2 1)\lvec(2 2)\lvec(9 2)\lvec(9 1)
\move(3 2)\lvec(3 3)\lvec(9 3)\lvec(9 2)
\move(5 3)\lvec(5 4)\lvec(9 4)\lvec(9 3)
\move(7 4)\lvec(7 5)\lvec(9 5)\lvec(9 4)
\move(8 0)\lvec(8 5)
\move(7 0)\lvec(7 5)
\move(6 0)\lvec(6 4)
\move(5 0)\lvec(5 3)
\move(4 0)\lvec(4 3)
\move(3 0)\lvec(3 2)
\move(2 0)\lvec(2 1)
\move(1 0)\lvec(1 1)
\htext(0.3 0.15){$1$}
\htext(1.3 0.15){$7$}
\htext(2.3 0.15){$7$}\htext(2.3 1.15){$2$}
\htext(3.3 0.15){$7$}\htext(3.3 1.15){$7$}\htext(3.3 2.15){$1$}
\htext(4.3 0.15){$7$}\htext(4.3 1.15){$7$}\htext(4.3 2.15){$\overline{7}$}
\htext(5.3 0.15){$7$}\htext(5.3 1.15){$7$}\htext(5.3 2.15){$7$}\htext(5.3 3.15){$7$}
\htext(6.3 0.15){$7$}\htext(6.3 1.15){$7$}\htext(6.3 2.15){$7$}\htext(6.3 3.15){$\overline{7}$}
\htext(7.3 0.15){$7$}\htext(7.3 1.15){$7$}\htext(7.3 2.15){$7$}\htext(7.3 3.15){$7$}\htext(7.3 4.15){$3$}
\htext(8.3 0.15){$7$}\htext(8.3 1.15){$7$}\htext(8.3 2.15){$7$}\htext(8.3 3.15){$7$}\htext(8.3 4.15){$5$}
\end{texdraw}
}
\newcommand{\modularfour}{
\begin{texdraw}
\drawdim em
\setunitscale 1.2
\move(8 0)\dsqa
\move(7 0)\dsqa
\move(6 0)\dsqa
\move(5 0)\dsqa
\move(4 0)\dsqa
\move(3 0)\dsqa
\move(2 0)\dsqa
\move(0 0)\lvec(8 0)\lvec(8 1)\lvec(0 1)\lvec(0 0)
\move(2 1)\lvec(2 2)\lvec(8 2)\lvec(8 1)
\move(3 2)\lvec(3 3)\lvec(8 3)\lvec(8 2)
\move(5 3)\lvec(5 4)\lvec(8 4)\lvec(8 3)
\move(6 4)\lvec(6 5)\lvec(8 5)\lvec(8 4)
\move(7 4)\lvec(7 5)\lvec(8 5)\lvec(8 4)
\move(7 0)\lvec(7 4)
\move(6 0)\lvec(6 4)
\move(5 0)\lvec(5 3)
\move(4 0)\lvec(4 3)
\move(3 0)\lvec(3 2)
\move(2 0)\lvec(2 1)
\move(1 0)\lvec(1 1)
\htext(0.3 0.15){$1$}
\htext(1.3 0.15){$7$}
\htext(2.3 0.15){$7$}\htext(2.3 1.15){$2$}
\htext(3.3 0.15){$7$}\htext(3.3 1.15){$7$}\htext(3.3 2.15){$1$}
\htext(4.3 0.15){$7$}\htext(4.3 1.15){$7$}\htext(4.3 2.15){$\overline{7}$}
\htext(5.3 0.15){$7$}\htext(5.3 1.15){$7$}\htext(5.3 2.15){$7$}\htext(5.3 3.15){$\overline{7}$}
\htext(6.3 0.15){$7$}\htext(6.3 1.15){$7$}\htext(6.3 2.15){$7$}\htext(6.3 3.15){$7$}\htext(6.3 4.15){$3$}
\htext(7.3 0.15){$7$}\htext(7.3 1.15){$7$}\htext(7.3 2.15){$7$}\htext(7.3 3.15){$7$}\htext(7.3 4.15){$5$}
\end{texdraw}
}
\newcommand{\modularsix}{
\begin{texdraw}
\drawdim em
\setunitscale 1.2
\move(5 2)\dsqa
\move(5 1)\dsqa
\move(5 0)\dsqa
\move(0 0)\lvec(10 0)\lvec(10 1)\lvec(0 1)\lvec(0 0)
\move(2 1)\lvec(2 2)\lvec(10 2)\lvec(10 1)
\move(3 2)\lvec(3 3)\lvec(10 3)\lvec(10 2)
\move(6 3)\lvec(6 4)\lvec(10 4)\lvec(10 3)
\move(8 4)\lvec(8 5)\lvec(10 5)\lvec(10 4)
\move(9 0)\lvec(9 5)
\move(8 0)\lvec(8 4)
\move(7 0)\lvec(7 4)
\move(6 0)\lvec(6 3)
\move(5 0)\lvec(5 3)
\move(4 0)\lvec(4 3)
\move(3 0)\lvec(3 2)
\move(2 0)\lvec(2 1)
\move(1 0)\lvec(1 1)
\htext(0.3 0.15){$1$}
\htext(1.3 0.15){$7$}
\htext(2.3 0.15){$7$}\htext(2.3 1.15){$2$}
\htext(3.3 0.15){$7$}\htext(3.3 1.15){$7$}\htext(3.3 2.15){$1$}
\htext(4.3 0.15){$7$}\htext(4.3 1.15){$7$}\htext(4.3 2.15){$7$}
\htext(5.3 0.15){$7$}\htext(5.3 1.15){$7$}\htext(5.3 2.15){$\overline{7}$}
\htext(6.3 0.15){$7$}\htext(6.3 1.15){$7$}\htext(6.3 2.15){$7$}\htext(6.3 3.15){$7$}
\htext(7.3 0.15){$7$}\htext(7.3 1.15){$7$}\htext(7.3 2.15){$7$}\htext(7.3 3.15){$7$}
\htext(8.3 0.15){$7$}\htext(8.3 1.15){$7$}\htext(8.3 2.15){$7$}\htext(8.3 3.15){$7$}\htext(8.3 4.15){$3$}
\htext(9.3 0.15){$7$}\htext(9.3 1.15){$7$}\htext(9.3 2.15){$7$}\htext(9.3 3.15){$7$}\htext(9.3 4.15){$5$}
\end{texdraw}
}
\newcommand{\modularseven}{
\begin{texdraw}
\drawdim em
\setunitscale 1.2
\move(6 3)\dsqa
\move(6 2)\dsqa
\move(6 1)\dsqa
\move(6 0)\dsqa
\move(0 0)\lvec(9 0)\lvec(9 1)\lvec(0 1)\lvec(0 0)
\move(2 1)\lvec(2 2)\lvec(9 2)\lvec(9 1)
\move(3 2)\lvec(3 3)\lvec(9 3)\lvec(9 2)
\move(5 3)\lvec(5 4)\lvec(9 4)\lvec(9 3)
\move(7 4)\lvec(7 5)\lvec(9 5)\lvec(9 4)
\move(8 0)\lvec(8 5)
\move(7 0)\lvec(7 5)
\move(6 0)\lvec(6 4)
\move(5 0)\lvec(5 3)
\move(4 0)\lvec(4 3)
\move(3 0)\lvec(3 2)
\move(2 0)\lvec(2 1)
\move(1 0)\lvec(1 1)
\htext(0.3 0.15){$1$}
\htext(1.3 0.15){$7$}
\htext(2.3 0.15){$7$}\htext(2.3 1.15){$2$}
\htext(3.3 0.15){$7$}\htext(3.3 1.15){$7$}\htext(3.3 2.15){$1$}
\htext(4.3 0.15){$7$}\htext(4.3 1.15){$7$}\htext(4.3 2.15){$\overline{7}$}
\htext(5.3 0.15){$7$}\htext(5.3 1.15){$7$}\htext(5.3 2.15){$7$}\htext(5.3 3.15){$7$}
\htext(6.3 0.15){$7$}\htext(6.3 1.15){$7$}\htext(6.3 2.15){$7$}\htext(6.3 3.15){$7$}
\htext(7.3 0.15){$7$}\htext(7.3 1.15){$7$}\htext(7.3 2.15){$7$}\htext(7.3 3.15){$7$}\htext(7.3 4.15){$3$}
\htext(8.3 0.15){$7$}\htext(8.3 1.15){$7$}\htext(8.3 2.15){$7$}\htext(8.3 3.15){$7$}\htext(8.3 4.15){$5$}
\end{texdraw}
}
\newcommand{\modulareight}{
\begin{texdraw}
\drawdim em
\setunitscale 1.2
\move(8 0)\dsqa
\move(7 0)\dsqa
\move(6 0)\dsqa
\move(6 1)\dsqa
\move(6 2)\dsqa
\move(6 3)\dsqa
\move(0 0)\lvec(8 0)\lvec(8 1)\lvec(0 1)\lvec(0 0)
\move(2 1)\lvec(2 2)\lvec(8 2)\lvec(8 1)
\move(3 2)\lvec(3 3)\lvec(8 3)\lvec(8 2)
\move(5 3)\lvec(5 4)\lvec(8 4)\lvec(8 3)
\move(6 4)\lvec(6 5)\lvec(8 5)\lvec(8 4)
\move(7 4)\lvec(7 5)\lvec(8 5)\lvec(8 4)
\move(7 0)\lvec(7 4)
\move(6 0)\lvec(6 4)
\move(5 0)\lvec(5 3)
\move(4 0)\lvec(4 3)
\move(3 0)\lvec(3 2)
\move(2 0)\lvec(2 1)
\move(1 0)\lvec(1 1)
\htext(0.3 0.15){$1$}
\htext(1.3 0.15){$7$}
\htext(2.3 0.15){$7$}\htext(2.3 1.15){$2$}
\htext(3.3 0.15){$7$}\htext(3.3 1.15){$7$}\htext(3.3 2.15){$1$}
\htext(4.3 0.15){$7$}\htext(4.3 1.15){$7$}\htext(4.3 2.15){$\overline{7}$}
\htext(5.3 0.15){$7$}\htext(5.3 1.15){$7$}\htext(5.3 2.15){$7$}\htext(5.3 3.15){$7$}
\htext(6.3 0.15){$7$}\htext(6.3 1.15){$7$}\htext(6.3 2.15){$7$}\htext(6.3 3.15){$7$}\htext(6.3 4.15){$3$}
\htext(7.3 0.15){$7$}\htext(7.3 1.15){$7$}\htext(7.3 2.15){$7$}\htext(7.3 3.15){$7$}\htext(7.3 4.15){$5$}
\end{texdraw}
}
\newcommand{\modularnine}{
\begin{texdraw}
\drawdim em
\setunitscale 1.2
\move(2 0)\dsqa
\move(3 0)\dsqa
\move(4 0)\dsqa
\move(5 0)\dsqa
\move(6 0)\dsqa
\move(7 0)\dsqa
\move(0 0)\lvec(0 1)\lvec(1 1)
\move(0 0)\lvec(1 0)
\move(1 0)\lvec(7 0)\lvec(7 1)\lvec(1 1)\lvec(1 0)
\move(2 1)\lvec(2 2)\lvec(7 2)\lvec(7 1)
\move(3 2)\lvec(3 3)\lvec(7 3)\lvec(7 2)
\move(5 3)\lvec(5 4)\lvec(7 4)\lvec(7 3)
\move(7 0)\lvec(7 4)
\move(6 0)\lvec(6 4)
\move(5 0)\lvec(5 3)
\move(4 0)\lvec(4 3)
\move(3 0)\lvec(3 2)
\move(2 0)\lvec(2 1)
\move(1 0)\lvec(1 1)
\htext(0.3 0.15){$1$}
\htext(1.3 0.15){$7$}
\htext(2.3 0.15){$7$}\htext(2.3 1.15){$2$}
\htext(3.3 0.15){$7$}\htext(3.3 1.15){$7$}\htext(3.3 2.15){$1$}
\htext(4.3 0.15){$7$}\htext(4.3 1.15){$7$}\htext(4.3 2.15){$\overline{7}$}
\htext(5.3 0.15){$7$}\htext(5.3 1.15){$7$}\htext(5.3 2.15){$7$}\htext(5.3 3.15){$3$}
\htext(6.3 0.15){$7$}\htext(6.3 1.15){$7$}\htext(6.3 2.15){$7$}\htext(6.3 3.15){$5$}
\end{texdraw}
}
\newcommand{\modularNine}{
\begin{texdraw}
\drawdim em
\setunitscale 1.2
\move(1 0)\dhsqa
\move(2 0)\dhsqA
\move(3 0)\dhsqA
\move(4 0)\dhsqA
\move(5 0)\dhsqA
\move(6 0)\dhsqA
\move(0 0)\lvec(0 1)\lvec(1 1)
\move(0 0)\lvec(1 0)
\move(1 0)\lvec(6 0)\lvec(6 1)\lvec(1 1)\lvec(1 0)
\move(2 1)\lvec(2 2)\lvec(6 2)\lvec(6 1)
\move(4 2)\lvec(4 3)\lvec(6 3)\lvec(6 2)
\move(5 0)\lvec(5 3)
\move(4 0)\lvec(4 3)
\move(3 0)\lvec(3 2)
\move(2 0)\lvec(2 1)
\move(1 0)\lvec(1 1)
\htext(0.3 0.15){$1$}
\htext(1.3 0.15){$2$}
\htext(2.3 0.15){$7$}\htext(2.3 1.15){$1$}
\htext(3.3 0.15){$7$}\htext(3.3 1.15){$\overline{7}$}
\htext(4.3 0.15){$7$}\htext(4.3 1.15){$7$}\htext(4.3 2.15){$3$}
\htext(5.3 0.15){$7$}\htext(5.3 1.15){$7$}\htext(5.3 2.15){$5$}
\end{texdraw}
}
\newcommand{\modularEight}{
\begin{texdraw}
\drawdim em
\setunitscale 1.2
\move(0 0)\lvec(0 1)\lvec(1 1)
\move(0 0)\lvec(1 0)
\move(1 0)\lvec(7 0)\lvec(7 1)\lvec(1 1)\lvec(1 0)
\move(2 1)\lvec(2 2)\lvec(7 2)\lvec(7 1)
\move(3 2)\lvec(3 3)\lvec(7 3)\lvec(7 2)
\move(5 3)\lvec(5 4)\lvec(7 4)\lvec(7 3)
\move(7 0)\lvec(7 4)
\move(6 0)\lvec(6 4)
\move(5 0)\lvec(5 3)
\move(4 0)\lvec(4 3)
\move(3 0)\lvec(3 2)
\move(2 0)\lvec(2 1)
\move(1 0)\lvec(1 1)
\htext(0.3 0.15){$1$}
\htext(1.3 0.15){$7$}
\htext(2.3 0.15){$7$}\htext(2.3 1.15){$2$}
\htext(3.3 0.15){$7$}\htext(3.3 1.15){$7$}\htext(3.3 2.15){$1$}
\htext(4.3 0.15){$7$}\htext(4.3 1.15){$7$}\htext(4.3 2.15){$\overline{7}$}
\htext(5.3 0.15){$7$}\htext(5.3 1.15){$7$}\htext(5.3 2.15){$7$}\htext(5.3 3.15){$3$}
\htext(6.3 0.15){$7$}\htext(6.3 1.15){$7$}\htext(6.3 2.15){$7$}\htext(6.3 3.15){$5$}
\end{texdraw}
}
\newcommand{\modularSeven}{
\begin{texdraw}
\drawdim em
\setunitscale 1.2
\move(0 0)
\bsegment
\move(4 0)\dhsqa \move(4 1)\dhsqa \move(4 2)\dhsqa
\move(5 0)\dhsqA
\move(6 0)\dhsqA
\move(7 0)\dhsqA
\move(0 0)\lvec(0 1)\lvec(1 1)
\move(0 0)\lvec(1 0)
\move(1 0)\lvec(7 0)\lvec(7 1)\lvec(1 1)\lvec(1 0)
\move(2 1)\lvec(2 2)\lvec(7 2)\lvec(7 1)
\move(3 2)\lvec(3 3)\lvec(7 3)\lvec(7 2)
\move(5 3)\lvec(5 4)\lvec(7 4)\lvec(7 3)
\move(7 0)\lvec(7 4)
\move(6 0)\lvec(6 4)
\move(5 0)\lvec(5 3)
\move(4 0)\lvec(4 3)
\move(3 0)\lvec(3 2)
\move(2 0)\lvec(2 1)
\move(1 0)\lvec(1 1)
\htext(0.3 0.15){$1$}
\htext(1.3 0.15){$7$}
\htext(2.3 0.15){$7$}\htext(2.3 1.15){$2$}
\htext(3.3 0.15){$7$}\htext(3.3 1.15){$7$}\htext(3.3 2.15){$1$}
\htext(4.3 0.15){$7$}\htext(4.3 1.15){$7$}\htext(4.3 2.15){$\overline{7}$}
\htext(5.3 0.15){$7$}\htext(5.3 1.15){$7$}\htext(5.3 2.15){$7$}\htext(5.3 3.15){$3$}
\htext(6.3 0.15){$7$}\htext(6.3 1.15){$7$}\htext(6.3 2.15){$7$}\htext(6.3 3.15){$5$}
\esegment
\move(11 0)
\bsegment
\move(5 0)\dhsqa \move(5 1)\dhsqa \move(5 2)\dhsqa \move(5 3)\dhsqa
\move(6 0)\dhsqA
\move(7 0)\dhsqA
\move(0 0)\lvec(0 1)\lvec(1 1)
\move(0 0)\lvec(1 0)
\move(1 0)\lvec(7 0)\lvec(7 1)\lvec(1 1)\lvec(1 0)
\move(2 1)\lvec(2 2)\lvec(7 2)\lvec(7 1)
\move(3 2)\lvec(3 3)\lvec(7 3)\lvec(7 2)
\move(5 3)\lvec(5 4)\lvec(7 4)\lvec(7 3)
\move(7 0)\lvec(7 4)
\move(6 0)\lvec(6 4)
\move(5 0)\lvec(5 3)
\move(4 0)\lvec(4 3)
\move(3 0)\lvec(3 2)
\move(2 0)\lvec(2 1)
\move(1 0)\lvec(1 1)
\htext(0.3 0.15){$1$}
\htext(1.3 0.15){$7$}
\htext(2.3 0.15){$7$}\htext(2.3 1.15){$2$}
\htext(3.3 0.15){$7$}\htext(3.3 1.15){$7$}\htext(3.3 2.15){$1$}
\htext(4.3 0.15){$7$}\htext(4.3 1.15){$7$}\htext(4.3 2.15){$\overline{7}$}
\htext(5.3 0.15){$7$}\htext(5.3 1.15){$7$}\htext(5.3 2.15){$7$}\htext(5.3 3.15){$3$}
\htext(6.3 0.15){$7$}\htext(6.3 1.15){$7$}\htext(6.3 2.15){$7$}\htext(6.3 3.15){$5$}
\esegment
\end{texdraw}
}
\newcommand{\modularSix}{
\begin{texdraw}
\drawdim em
\setunitscale 1.2
\move(0 0)
\bsegment
\move(0 0)\lvec(8 0)\lvec(8 1)\lvec(0 1)\lvec(0 0)
\move(2 1)\lvec(2 2)\lvec(8 2)\lvec(8 1)
\move(3 2)\lvec(3 3)\lvec(8 3)\lvec(8 2)
\move(5 3)\lvec(5 4)\lvec(8 4)\lvec(8 3)
\move(6 4)\lvec(6 5)\lvec(8 5)\lvec(8 4)
\move(7 4)\lvec(7 5)\lvec(8 5)\lvec(8 4)
\move(7 0)\lvec(7 4)
\move(6 0)\lvec(6 4)
\move(5 0)\lvec(5 3)
\move(4 0)\lvec(4 3)
\move(3 0)\lvec(3 2)
\move(2 0)\lvec(2 1)
\move(1 0)\lvec(1 1)
\htext(0.3 0.15){$1$}
\htext(1.3 0.15){$7$}
\htext(2.3 0.15){$7$}\htext(2.3 1.15){$2$}
\htext(3.3 0.15){$7$}\htext(3.3 1.15){$7$}\htext(3.3 2.15){$1$}
\htext(4.3 0.15){$7$}\htext(4.3 1.15){$7$}\htext(4.3 2.15){$7$}
\htext(5.3 0.15){$7$}\htext(5.3 1.15){$7$}\htext(5.3 2.15){$7$}\htext(5.3 3.15){$\overline{7}$}
\htext(6.3 0.15){$7$}\htext(6.3 1.15){$7$}\htext(6.3 2.15){$7$}\htext(6.3 3.15){$7$}\htext(6.3 4.15){$3$}
\htext(7.3 0.15){$7$}\htext(7.3 1.15){$7$}\htext(7.3 2.15){$7$}\htext(7.3 3.15){$7$}\htext(7.3 4.15){$5$}
\esegment
\move(11 0)
\bsegment
\move(5 0)\dhsqa \move(5 1)\dhsqa \move(5 2)\dhsqa \move(5 3)\dhsqa
\move(0 0)\lvec(8 0)\lvec(8 1)\lvec(0 1)\lvec(0 0)
\move(2 1)\lvec(2 2)\lvec(8 2)\lvec(8 1)
\move(3 2)\lvec(3 3)\lvec(8 3)\lvec(8 2)
\move(5 3)\lvec(5 4)\lvec(8 4)\lvec(8 3)
\move(6 4)\lvec(6 5)\lvec(8 5)\lvec(8 4)
\move(7 4)\lvec(7 5)\lvec(8 5)\lvec(8 4)
\move(7 0)\lvec(7 4)
\move(6 0)\lvec(6 4)
\move(5 0)\lvec(5 3)
\move(4 0)\lvec(4 3)
\move(3 0)\lvec(3 2)
\move(2 0)\lvec(2 1)
\move(1 0)\lvec(1 1)
\htext(0.3 0.15){$1$}
\htext(1.3 0.15){$7$}
\htext(2.3 0.15){$7$}\htext(2.3 1.15){$2$}
\htext(3.3 0.15){$7$}\htext(3.3 1.15){$7$}\htext(3.3 2.15){$1$}
\htext(4.3 0.15){$7$}\htext(4.3 1.15){$7$}\htext(4.3 2.15){$\overline{7}$}
\htext(5.3 0.15){$7$}\htext(5.3 1.15){$7$}\htext(5.3 2.15){$7$}\htext(5.3 3.15){$7$}
\htext(6.3 0.15){$7$}\htext(6.3 1.15){$7$}\htext(6.3 2.15){$7$}\htext(6.3 3.15){$7$}\htext(6.3 4.15){$3$}
\htext(7.3 0.15){$7$}\htext(7.3 1.15){$7$}\htext(7.3 2.15){$7$}\htext(7.3 3.15){$7$}\htext(7.3 4.15){$5$}
\esegment
\end{texdraw}
}
\newcommand{\modularFive}{
\begin{texdraw}
\drawdim em
\setunitscale 1.2
\move(4 0)\dhsqa \move(4 1)\dhsqa \move(4 2)\dhsqa
\move(0 0)\lvec(9 0)\lvec(9 1)\lvec(0 1)\lvec(0 0)
\move(2 1)\lvec(2 2)\lvec(9 2)\lvec(9 1)
\move(3 2)\lvec(3 3)\lvec(9 3)\lvec(9 2)
\move(5 3)\lvec(5 4)\lvec(9 4)\lvec(9 3)
\move(7 4)\lvec(7 5)\lvec(9 5)\lvec(9 4)
\move(8 0)\lvec(8 5)
\move(7 0)\lvec(7 5)
\move(6 0)\lvec(6 4)
\move(5 0)\lvec(5 3)
\move(4 0)\lvec(4 3)
\move(3 0)\lvec(3 2)
\move(2 0)\lvec(2 1)
\move(1 0)\lvec(1 1)
\htext(0.3 0.15){$1$}
\htext(1.3 0.15){$7$}
\htext(2.3 0.15){$7$}\htext(2.3 1.15){$2$}
\htext(3.3 0.15){$7$}\htext(3.3 1.15){$7$}\htext(3.3 2.15){$1$}
\htext(4.3 0.15){$7$}\htext(4.3 1.15){$7$}\htext(4.3 2.15){$\overline{7}$}
\htext(5.3 0.15){$7$}\htext(5.3 1.15){$7$}\htext(5.3 2.15){$7$}\htext(5.3 3.15){$7$}
\htext(6.3 0.15){$7$}\htext(6.3 1.15){$7$}\htext(6.3 2.15){$7$}\htext(6.3 3.15){$7$}
\htext(7.3 0.15){$7$}\htext(7.3 1.15){$7$}\htext(7.3 2.15){$7$}\htext(7.3 3.15){$7$}\htext(7.3 4.15){$3$}
\htext(8.3 0.15){$7$}\htext(8.3 1.15){$7$}\htext(8.3 2.15){$7$}\htext(8.3 3.15){$7$}\htext(8.3 4.15){$5$}
\end{texdraw}
}
    \newcommand{\htblock}{
    \begin{texdraw}
    \drawdim em
    \setunitscale 1.0
    \move(0 -0.2)
    \bsegment
    \move(0.25 0.25)\lvec(0.25 1.25)\lvec(1.25 1.25)\lvec(1.25 0.25)\lvec(0.25 0.25)
    \move(0.25 1.25)\lvec(0.5 1.5)\lvec(1.5 1.5)\lvec(1.25 1.25)
    \move(1.5 1.5)\lvec(1.5 0.5)\lvec(1.25 0.25)
    \move(1.25 0.25)\lvec(1 0)\lvec(0 0)\lvec(0.25 0.25)
    \htext(2 0.6){$=$}
    \move(3 0.3)\lvec(4 0.3)\lvec(4 1.3)\lvec(3 0.3)
    \htext(4.5 0.3){$,$}
    \esegment
    \move(5.5 -0.2)
    \bsegment
    \move(0 0)\lvec(0 1)\lvec(1 1)\lvec(1 0)\lvec(0 0)
    \move(0 1)\lvec(0.25 1.25)\lvec(1.25 1.25)\lvec(1 1)
    \move(1.25 1.25)\lvec(1.25 0.25)\lvec(1 0)
    \move(1.25 0.25)\lvec(1.5 0.5)\lvec(1.25 0.5)
    \htext(2 0.6){$=$}
    \move(3 0.3)\lvec(3 1.3)\lvec(4 1.3)\lvec(3 0.3)
    \esegment
    \end{texdraw}
    }
\newcommand{\PatAtnt}{
\begin{texdraw}
\fontsize{7}{7}\selectfont
\drawdim em
\setunitscale 1.9
\move(0 0)\lvec(-4 0)\lvec(-4 0.5)\lvec(0 0.5)\ifill f:0.7
\move(0 0)\rlvec(-4.3 0) \move(0 0.5)\rlvec(-4.3 0) \move(0 1)\rlvec(-4.3 0)
\move(0 2)\rlvec(-4.3 0) \move(0 3.5)\rlvec(-4.3 0) \move(0 4.5)\rlvec(-4.3 0)
\move(0 6)\rlvec(-4.3 0) \move(0 7)\rlvec(-4.3 0) \move(0 7.5)\rlvec(-4.3 0)
\move(0 8)\rlvec(-4.3 0) \move(0 9)\rlvec(-4.3 0) \move(0 0)\rlvec(0 9.3)
\move(-1 0)\rlvec(0 9.3) \move(-2 0)\rlvec(0 9.3) \move(-3 0)\rlvec(0 9.3)
\move(-4 0)\rlvec(0 9.3) \move(0 11)\rlvec(-4.3 0)
\move(0 0)\rlvec(0 11.3) \move(-1 0)\rlvec(0 11.3) \move(-2 0)\rlvec(0 11.3)
\move(-3 0)\rlvec(0 11.3) \move(-4 0)\rlvec(0 11.3)
\htext(-0.6 0.1){$0$} \htext(-1.6 0.1){$0$} \htext(-2.6 0.1){$0$}\htext(-3.6 0.1){$0$}
\htext(-0.6 0.6){$0$} \htext(-1.6 0.6){$0$} \htext(-2.6 0.6){$0$} \htext(-3.6 0.6){$0$}
\htext(-0.6 1.35){$1$} \htext(-1.6 1.35){$1$} \htext(-2.6 1.35){$1$} \htext(-3.6 1.35){$1$}
\htext(-0.6 3.85){$n$} \htext(-1.6 3.85){$n$} \htext(-2.6 3.85){$n$} \htext(-3.6 3.85){$n$}
\htext(-0.6 6.35){$1$} \htext(-1.6 6.35){$1$} \htext(-2.6 6.35){$1$} \htext(-3.6 6.35){$1$}
\htext(-0.6 7.1){$0$} \htext(-1.6 7.1){$0$} \htext(-2.6 7.1){$0$} \htext(-3.6 7.1){$0$}
\htext(-0.6 7.6){$0$} \htext(-1.6 7.6){$0$} \htext(-2.6 7.6){$0$} \htext(-3.6 7.6){$0$}
\htext(-0.6 8.35){$1$} \htext(-1.6 8.35){$1$} \htext(-2.6 8.35){$1$} \htext(-3.6 8.35){$1$}
\vtext(-0.4 2.6){$\cdots$} \vtext(-1.4 2.6){$\cdots$} \vtext(-2.4 2.6){$\cdots$} \vtext(-3.4 2.6){$\cdots$}
\vtext(-0.4 5.1){$\cdots$} \vtext(-1.4 5.1){$\cdots$} \vtext(-2.4 5.1){$\cdots$} \vtext(-3.4 5.1){$\cdots$}
\vtext(-0.4 9.6){$\cdots$} \vtext(-1.4 9.6){$\cdots$} \vtext(-2.4 9.6){$\cdots$} \vtext(-3.4 9.6){$\cdots$}
\end{texdraw}
}
\newcommand{\SPatAtnmo}{
\begin{texdraw}
\fontsize{7}{7}\selectfont
\drawdim em
\setunitscale 1.9
\move(0 0)\dtri \move(-1 0)\dtri \move(-2 0)\dtri \move(-3 0)\dtri
\move(0 0)\rlvec(-4.3 0) \move(0 1)\rlvec(-4.3 0) \move(0 2)\rlvec(-4.3 0)
\move(0 3.5)\rlvec(-4.3 0) \move(0 4.5)\rlvec(-4.3 0) \move(0 6)\rlvec(-4.3 0)
\move(0 7)\rlvec(-4.3 0) \move(0 8)\rlvec(-4.3 0) \move(0 9)\rlvec(-4.3 0)
\move(0 0)\rlvec(0 9.3) \move(-1 0)\rlvec(0 9.3) \move(-2 0)\rlvec(0 9.3)
\move(-3 0)\rlvec(0 9.3) \move(-4 0)\rlvec(0 9.3)
\move(-1 0)\rlvec(1 1) \move(-2 0)\rlvec(1 1) \move(-3 0)\rlvec(1 1)
\move(-4 0)\rlvec(1 1) \move(-1 7)\rlvec(1 1) \move(-2 7)\rlvec(1 1)
\move(-3 7)\rlvec(1 1) \move(-4 7)\rlvec(1 1) \move(0 11)\rlvec(-4.3 0)
\move(0 0)\rlvec(0 11.3) \move(-1 0)\rlvec(0 11.3) \move(-2 0)\rlvec(0 11.3)
\move(-3 0)\rlvec(0 11.3) \move(-4 0)\rlvec(0 11.3)
\vtext(-0.4 2.5){$\cdots$} \vtext(-0.4 5.1){$\cdots$}\vtext(-1.4 2.6){$\cdots$} \vtext(-1.4 5.1){$\cdots$}
\vtext(-2.4 2.5){$\cdots$} \vtext(-2.4 5.1){$\cdots$}\vtext(-3.4 2.6){$\cdots$} \vtext(-3.4 5.1){$\cdots$}
\vtext(-0.4 9.6){$\cdots$} \vtext(-1.4 9.6){$\cdots$} \vtext(-2.4 9.6){$\cdots$} \vtext(-3.4 9.6){$\cdots$}
\htext(-0.35 7.12){$0$} \htext(-0.85 7.6){$1$} \htext(-1.35 7.12){$1$}
\htext(-1.85 7.6){$0$} \htext(-2.35 7.12){$0$} \htext(-2.85 7.6){$1$}
\htext(-3.35 7.12){$1$} \htext(-3.85 7.6){$0$}
\htext(-0.35 0.12){$0$} \htext(-0.85 0.6){$1$} \htext(-1.35 0.12){$1$}
\htext(-1.85 0.6){$0$} \htext(-2.35 0.12){$0$} \htext(-2.85 0.6){$1$}
\htext(-3.35 0.12){$1$} \htext(-3.85 0.6){$0$}
\htext(-0.6 1.35){$2$} \htext(-1.6 1.35){$2$} \htext(-2.6 1.35){$2$}\htext(-3.6 1.35){$2$}
\htext(-0.6 6.35){$2$} \htext(-1.6 6.35){$2$} \htext(-2.6 6.35){$2$}\htext(-3.6 6.35){$2$}
\htext(-0.6 8.35){$2$} \htext(-1.6 8.35){$2$} \htext(-2.6 8.35){$2$}\htext(-3.6 8.35){$2$}
\htext(-0.6 3.85){$n$} \htext(-1.6 3.85){$n$} \htext(-2.6 3.85){$n$} \htext(-3.6 3.85){$n$}
\end{texdraw}
}
\newcommand{\FPatAtnmo}{
\begin{texdraw}
\fontsize{7}{7}\selectfont
\drawdim em
\setunitscale 1.9
\move(0 0)\dtri \move(-1 0)\dtri \move(-2 0)\dtri \move(-3 0)\dtri
\move(0 0)\rlvec(-4.3 0) \move(0 1)\rlvec(-4.3 0) \move(0 2)\rlvec(-4.3 0)
\move(0 3.5)\rlvec(-4.3 0) \move(0 4.5)\rlvec(-4.3 0) \move(0 6)\rlvec(-4.3 0)
\move(0 7)\rlvec(-4.3 0) \move(0 8)\rlvec(-4.3 0) \move(0 9)\rlvec(-4.3 0)
\move(0 0)\rlvec(0 9.3) \move(-1 0)\rlvec(0 9.3) \move(-2 0)\rlvec(0 9.3)
\move(-3 0)\rlvec(0 9.3) \move(-4 0)\rlvec(0 9.3)
\move(-1 0)\rlvec(1 1) \move(-2 0)\rlvec(1 1) \move(-3 0)\rlvec(1 1)
\move(-4 0)\rlvec(1 1) \move(-1 7)\rlvec(1 1) \move(-2 7)\rlvec(1 1)
\move(-3 7)\rlvec(1 1) \move(-4 7)\rlvec(1 1) \move(0 11)\rlvec(-4.3 0)
\move(0 0)\rlvec(0 11.3) \move(-1 0)\rlvec(0 11.3) \move(-2 0)\rlvec(0 11.3)
\move(-3 0)\rlvec(0 11.3) \move(-4 0)\rlvec(0 11.3)
\vtext(-0.4 2.6){$\cdots$} \vtext(-0.4 5.1){$\cdots$} \vtext(-1.4 2.6){$\cdots$} \vtext(-1.4 5.1){$\cdots$}
\vtext(-2.4 2.6){$\cdots$} \vtext(-2.4 5.1){$\cdots$} \vtext(-3.4 2.6){$\cdots$} \vtext(-3.4 5.1){$\cdots$}
\htext(-0.35 7.12){$1$} \htext(-0.85 7.6){$0$} \htext(-1.35 7.12){$0$}
\htext(-1.85 7.6){$1$} \htext(-2.35 7.12){$1$} \htext(-2.85 7.6){$0$}
\htext(-3.35 7.12){$0$} \htext(-3.85 7.6){$1$}
\htext(-0.35 0.12){$1$} \htext(-0.85 0.6){$0$} \htext(-1.35 0.12){$0$}
\htext(-1.85 0.6){$1$} \htext(-2.35 0.12){$1$} \htext(-2.85 0.6){$0$}
\htext(-3.35 0.12){$0$} \htext(-3.85 0.6){$1$}
\htext(-0.6 1.35){$2$} \htext(-1.6 1.35){$2$} \htext(-2.6 1.35){$2$} \htext(-3.6 1.35){$2$}
\htext(-0.6 6.35){$2$} \htext(-1.6 6.35){$2$} \htext(-2.6 6.35){$2$} \htext(-3.6 6.35){$2$}
\htext(-0.6 8.35){$2$} \htext(-1.6 8.35){$2$} \htext(-2.6 8.35){$2$} \htext(-3.6 8.35){$2$}
\htext(-0.6 3.85){$n$} \htext(-1.6 3.85){$n$} \htext(-2.6 3.85){$n$}\htext(-3.6 3.85){$n$}
\vtext(-0.4 9.6){$\cdots$} \vtext(-1.4 9.6){$\cdots$} \vtext(-2.4 9.6){$\cdots$} \vtext(-3.4 9.6){$\cdots$}
\end{texdraw}
}
\newcommand{\TPatBnO}{
\begin{texdraw}
\fontsize{7}{7}\selectfont
\drawdim em
\setunitscale 1.9
\move(0 0)\lvec(-4 0)\lvec(-4 0.5)\lvec(0 0.5)\ifill f:0.7
\move(0 0)\rlvec(-4.3 0) \move(0 1)\rlvec(-4.3 0) \move(0 2)\rlvec(-4.3 0)
\move(0 3.5)\rlvec(-4.3 0) \move(0 4.5)\rlvec(-4.3 0)
\move(0 5.5)\rlvec(-4.3 0) \move(0 6.5)\rlvec(-4.3 0) \move(0 8)\rlvec(-4.3 0)
\move(0 9)\rlvec(-4.3 0) \move(0 10)\rlvec(-4.3 0) \move(0 11)\rlvec(-4.3 0)
\move(0 0)\rlvec(0 11.3) \move(-1 0)\rlvec(0 11.3) \move(-2 0)\rlvec(0 11.3)
\move(-3 0)\rlvec(0 11.3) \move(-4 0)\rlvec(0 11.3)
\move(-1 4.5)\rlvec(1 1) \move(-2 4.5)\rlvec(1 1) \move(-3 4.5)\rlvec(1 1)
\move(-4 4.5)\rlvec(1 1)
\htext(-0.6 0.1){$n$}
\htext(-0.9 1.35){$n\!\!-\!\!1$} \vtext(-0.4 2.6){$\cdots$} \htext(-0.6 3.85){$2$}
\htext(-0.4 4.6){$1$} \htext(-0.85 5.1){$0$} \htext(-0.6 5.85){$2$}
\vtext(-0.4 7.1){$\cdots$} \htext(-0.9 8.35){$n\!\!-\!\!1$}
\htext(-0.6 9.1){$n$}
\htext(-0.9 10.35){$n\!\!-\!\!1$}
\htext(-2.6 0.1){$n$}
\htext(-2.9 1.35){$n\!\!-\!\!1$} \vtext(-2.4 2.6){$\cdots$} \htext(-2.6 3.85){$2$}
\htext(-2.4 4.6){$1$} \htext(-2.85 5.1){$0$} \htext(-2.6 5.85){$2$}
\vtext(-2.4 7.1){$\cdots$} \htext(-2.9 8.35){$n\!\!-\!\!1$}
\htext(-2.6 9.1){$n$}
\htext(-2.9 10.35){$n\!\!-\!\!1$}
\htext(-1.6 0.6){$n$}
\htext(-1.9 1.35){$n\!\!-\!\!1$} \vtext(-1.4 2.6){$\cdots$} \htext(-1.6 3.85){$2$}
\htext(-1.4 4.6){$0$} \htext(-1.85 5.1){$1$} \htext(-1.6 5.85){$2$}
\vtext(-1.4 7.1){$\cdots$} \htext(-1.9 8.35){$n\!\!-\!\!1$}
\htext(-1.6 9.6){$n$}
\htext(-1.9 10.35){$n\!\!-\!\!1$}
\htext(-3.6 0.6){$n$}
\htext(-3.9 1.35){$n\!\!-\!\!1$} \vtext(-3.4 2.6){$\cdots$} \htext(-3.6 3.85){$2$}
\htext(-3.4 4.6){$0$} \htext(-3.85 5.1){$1$} \htext(-3.6 5.85){$2$}
\vtext(-3.4 7.1){$\cdots$} \htext(-3.9 8.35){$n\!\!-\!\!1$}
\htext(-3.6 9.6){$n$}
\htext(-3.9 10.35){$n\!\!-\!\!1$}
\move(0 0.5)\rlvec(-4.3 0)
\move(0 9.5)\rlvec(-4.3 0)
\htext(-3.6 9.1){$n$} \htext(-3.6 0.1){$n$}
\htext(-2.6 9.6){$n$} \htext(-2.6 0.6){$n$}
\htext(-1.6 0.1){$n$} \htext(-1.6 9.1){$n$}
\htext(-0.6 9.6){$n$} \htext(-0.6 0.6){$n$}
\end{texdraw}
}
\newcommand{\SPatBnO}{
\begin{texdraw}
\fontsize{7}{7}\selectfont
\drawdim em
\setunitscale 1.9
\move(0 0)\dtri \move(-1 0)\dtri \move(-2 0)\dtri \move(-3 0)\dtri
\move(0 0)\rlvec(-4.3 0) \move(0 1)\rlvec(-4.3 0) \move(0 2)\rlvec(-4.3 0)
\move(0 3.5)\rlvec(-4.3 0) \move(0 4.5)\rlvec(-4.3 0)
\move(0 5.5)\rlvec(-4.3 0) \move(0 6.5)\rlvec(-4.3 0)
\move(0 8)\rlvec(-4.3 0) \move(0 9)\rlvec(-4.3 0)
\move(0 10)\rlvec(-4.3 0) \move(0 11)\rlvec(-4.3 0)
\move(0 0)\rlvec(0 11.3) \move(-1 0)\rlvec(0 11.3) \move(-2 0)\rlvec(0 11.3)
\move(-3 0)\rlvec(0 11.3) \move(-4 0)\rlvec(0 11.3)
\move(-1 0)\rlvec(1 1) \move(-2 0)\rlvec(1 1) \move(-3 0)\rlvec(1 1)
\move(-4 0)\rlvec(1 1) \move(-1 9)\rlvec(1 1) \move(-2 9)\rlvec(1 1)
\move(-3 9)\rlvec(1 1) \move(-4 9)\rlvec(1 1)
\move(0 5)\rlvec(-4.3 0)
\htext(-0.4 0.1){$0$} \htext(-0.85 0.6){$1$} \htext(-0.6 1.35){$2$}
\vtext(-0.6 2.6){$\cdots$} \htext(-0.9 3.85){$n\!\!-\!\!1$}
\htext(-0.9 5.85){$n\!\!-\!\!1$} \htext(-0.6 8.35){$2$}
\htext(-0.4 9.1){$0$} \htext(-0.85 9.6){$1$} \htext(-0.6 10.35){$2$}
\htext(-2.4 0.1){$0$} \htext(-2.85 0.6){$1$} \htext(-2.6 1.35){$2$}
\vtext(-2.4 2.6){$\cdots$} \htext(-2.9 3.85){$n\!\!-\!\!1$}
\htext(-2.9 5.85){$n\!\!-\!\!1$} \htext(-2.6 8.35){$2$}
\htext(-2.4 9.1){$0$} \htext(-2.85 9.6){$1$} \htext(-2.6 10.35){$2$}
\htext(-1.4 0.1){$1$} \htext(-1.85 0.6){$0$} \htext(-1.6 1.35){$2$}
\vtext(-1.4 2.6){$\cdots$} \htext(-1.9 3.85){$n\!\!-\!\!1$}
\htext(-1.9 5.85){$n\!\!-\!\!1$} \htext(-1.6 8.35){$2$}
\htext(-1.4 9.1){$1$} \htext(-1.85 9.6){$0$} \htext(-1.6 10.35){$2$}
\htext(-3.4 0.1){$1$} \htext(-3.85 0.6){$0$} \htext(-3.6 1.35){$2$}
\vtext(-3.4 2.6){$\cdots$} \htext(-3.9 3.85){$n\!\!-\!\!1$}
\htext(-3.9 5.85){$n\!\!-\!\!1$} \htext(-3.6 8.35){$2$}
\htext(-3.4 9.1){$1$} \htext(-3.85 9.6){$0$} \htext(-3.6 10.35){$2$}
\htext(-0.6 4.6){$n$} \htext(-2.6 4.6){$n$} \htext(-1.6 5.1){$n$}
\htext(-3.6 5.1){$n$} \htext(-1.6 4.6){$n$} \htext(-3.6 4.6){$n$}
\htext(-0.6 5.1){$n$} \htext(-2.6 5.1){$n$}
\vtext(-0.6 7.1){$\cdots$} \vtext(-1.4 7.1){$\cdots$}
\vtext(-2.4 7.1){$\cdots$} \vtext(-3.4 7.1){$\cdots$}
\end{texdraw}
}
\newcommand{\FPatBnO}{
\begin{texdraw}
\fontsize{7}{7}\selectfont
\drawdim em
\setunitscale 1.9
\move(0 0)\dtri \move(-1 0)\dtri \move(-2 0)\dtri \move(-3 0)\dtri
\move(0 0)\rlvec(-4.3 0) \move(0 1)\rlvec(-4.3 0) \move(0 2)\rlvec(-4.3 0)
\move(0 3.5)\rlvec(-4.3 0) \move(0 4.5)\rlvec(-4.3 0)
\move(0 5.5)\rlvec(-4.3 0) \move(0 6.5)\rlvec(-4.3 0)
\move(0 8)\rlvec(-4.3 0) \move(0 9)\rlvec(-4.3 0)
\move(0 10)\rlvec(-4.3 0) \move(0 11)\rlvec(-4.3 0)
\move(0 0)\rlvec(0 11.3) \move(-1 0)\rlvec(0 11.3) \move(-2 0)\rlvec(0 11.3)
\move(-3 0)\rlvec(0 11.3) \move(-4 0)\rlvec(0 11.3)
\move(-1 0)\rlvec(1 1) \move(-2 0)\rlvec(1 1) \move(-3 0)\rlvec(1 1)
\move(-4 0)\rlvec(1 1) \move(-1 9)\rlvec(1 1) \move(-2 9)\rlvec(1 1)
\move(-3 9)\rlvec(1 1) \move(-4 9)\rlvec(1 1)
\move(0 5)\rlvec(-4.3 0)
\htext(-0.4 0.1){$1$} \htext(-0.85 0.6){$0$}
\vtext(-0.4 2.6){$\cdots$}
\htext(-0.9 3.85){$n\!\!-\!\!1$} \htext(-0.9 5.85){$n\!\!-\!\!1$}
\htext(-2.9 3.85){$n\!\!-\!\!1$} \htext(-2.9 5.85){$n\!\!-\!\!1$}
\htext(-1.9 3.85){$n\!\!-\!\!1$} \htext(-1.9 5.85){$n\!\!-\!\!1$}
\htext(-3.9 3.85){$n\!\!-\!\!1$} \htext(-3.9 5.85){$n\!\!-\!\!1$}
\htext(-0.6 8.35){$2$}\htext(-0.6 10.35){$2$}\htext(-2.6 1.35){$2$}
\htext(-2.6 10.35){$2$}\htext(-1.6 1.35){$2$}\htext(-0.6 1.35){$2$}
\htext(-2.6 8.35){$2$}\htext(-1.6 8.35){$2$}\htext(-1.6 10.35){$2$}
\htext(-3.6 1.35){$2$}\htext(-3.6 8.35){$2$}\htext(-3.6 10.35){$2$}
\htext(-0.4 9.1){$1$} \htext(-0.85 9.6){$0$}
\htext(-2.4 0.1){$1$} \htext(-2.85 0.6){$0$}
\vtext(-2.4 2.75){$\cdots$}
\htext(-2.4 9.1){$1$} \htext(-2.85 9.6){$0$}
\htext(-1.4 0.1){$0$} \htext(-1.85 0.6){$1$}
\vtext(-1.4 2.6){$\cdots$}
\htext(-1.4 9.1){$0$} \htext(-1.85 9.6){$1$}
\htext(-3.4 0.1){$0$} \htext(-3.85 0.6){$1$}
\vtext(-3.4 2.6){$\cdots$}
\htext(-3.4 9.1){$0$} \htext(-3.85 9.6){$1$}
\htext(-0.6 4.6){$n$} \htext(-2.6 4.6){$n$} \htext(-1.6 5.1){$n$}
\htext(-3.6 5.1){$n$} \htext(-1.6 4.6){$n$} \htext(-3.6 4.6){$n$}
\htext(-0.6 5.1){$n$} \htext(-2.6 5.1){$n$}
\vtext(-0.4 7.1){$\cdots$} \vtext(-1.4 7.1){$\cdots$}
\vtext(-2.4 7.1){$\cdots$} \vtext(-3.4 7.1){$\cdots$}
\end{texdraw}
}
\newcommand{\Bthreeoneex}{
\begin{texdraw}
\drawdim em
\setunitscale 1.7
\move(0 0)
\bsegment
\move(3 0)\dtri
\move(2 0)\dtri
\move(1 0)\dtri
\move(2 4)\lvec(3 5)\lvec(3 4)
\move(2 3)\lvec(2 4)\lvec(3 4)\lvec(3 3)
\move(2 1)\lvec(2 3)\lvec(3 3)\lvec(3 2)
\move(1 2)\lvec(1 2.5)\lvec(3 2.5)\lvec(3 2)
\move(1 1)\lvec(1 2)\lvec(3 2)\lvec(3 1)
\move(0 0)\lvec(0 1)\lvec(3 1)\lvec(3 0)
\move(0 0)\lvec(1 0)\lvec(1 1)\lvec(0 0)
\move(1 0)\lvec(2 0)\lvec(2 1)\lvec(1 0)
\move(2 0)\lvec(3 0)\lvec(3 1)\lvec(2 0)
\htext(0.65 0.12){$1$}
\htext(1.65 0.12){$0$}
\htext(2.65 0.12){$1$}
\htext(0.15 0.52){$0$}
\htext(1.15 0.52){$1$}
\htext(2.15 0.52){$0$}
\htext(2.65 4.12){$1$}
\htext(1.4 1.35){$2$}\htext(2.4 1.35){$2$}
\htext(1.4 2.1){$3$}\htext(2.4 2.1){$3$}\htext(2.4 2.6){$3$}
\htext(2.4 3.35){$2$}
\esegment
\end{texdraw}
}
\newcommand{\Bthreeoneextwo}{
\begin{texdraw}
\drawdim em
\setunitscale 1.7
\move(0 0)
\bsegment
\htext(-1.5 2){$Y^1=$}
\move(3 0)\dtri
\move(2 0)\dtri
\move(1 0)\dtri
\move(2 1)\lvec(2 2)
\move(1 1)\lvec(1 2)\lvec(3 2)\lvec(3 1)
\move(0 0)\lvec(0 1)\lvec(3 1)\lvec(3 0)
\move(0 0)\lvec(1 0)\lvec(1 1)\lvec(0 0)
\move(1 0)\lvec(2 0)\lvec(2 1)\lvec(1 0)
\move(2 0)\lvec(3 0)\lvec(3 1)\lvec(2 0)
\htext(0.65 0.12){$1$}
\htext(1.65 0.12){$0$}
\htext(2.65 0.12){$1$}
\htext(0.15 0.52){$0$}
\htext(1.15 0.52){$1$}
\htext(2.15 0.52){$0$}
\htext(1.4 1.35){$2$}\htext(2.4 1.35){$2$}
\esegment
\move(4.5 0)
\bsegment
\htext(-0.5 2){$Y^2=$}
\move(3 0)\dtri
\move(2 0)\dtri
\move(2 4)\lvec(3 5)\lvec(3 4)
\move(2 3)\lvec(2 4)\lvec(3 4)\lvec(3 3)
\move(2 1)\lvec(2 3)\lvec(3 3)\lvec(3 2)
\move(1 2.5)\lvec(1 4)\lvec(2 5)\lvec(2 4)\lvec(1 4)
\move(1 3)\lvec(2 3)
\move(1 2)\lvec(1 2.5)\lvec(3 2.5)\lvec(3 2)
\move(1 1)\lvec(1 2)\lvec(3 2)\lvec(3 1)
\move(1 0)\lvec(1 1)\lvec(3 1)\lvec(3 0)
\move(1 0)\lvec(2 0)\lvec(2 1)\lvec(1 0)
\move(2 0)\lvec(3 0)\lvec(3 1)\lvec(2 0)
\htext(1.65 0.12){$0$}
\htext(2.65 0.12){$1$}
\htext(1.15 0.52){$1$}
\htext(2.15 0.52){$0$}
\htext(2.65 4.12){$1$}\htext(1.65 4.12){$0$}
\htext(1.4 1.35){$2$}\htext(2.4 1.35){$2$}
\htext(1.4 2.6){$3$}\htext(1.4 2.1){$3$}\htext(2.4 2.1){$3$}\htext(2.4 2.6){$3$}
\htext(2.4 3.35){$2$}\htext(1.4 3.35){$2$}
\esegment
\move(10 0)
\bsegment
\htext(-1.5 2){$Y^3=$}
\move(3 0)\dtri
\move(2 0)\dtri
\move(1 0)\dtri
\move(2 4)\lvec(3 5)\lvec(3 4)
\move(2 3)\lvec(2 4)\lvec(3 4)\lvec(3 3)
\move(2 1)\lvec(2 3)\lvec(3 3)\lvec(3 2)
\move(1 2.5)\lvec(1 4)\lvec(2 5)\lvec(2 4)\lvec(1 4)
\move(1 3)\lvec(2 3)
\move(1 2)\lvec(1 2.5)\lvec(3 2.5)\lvec(3 2)
\move(1 1)\lvec(1 2)\lvec(3 2)\lvec(3 1)
\move(0 0)\lvec(0 1)\lvec(3 1)\lvec(3 0)
\move(0 0)\lvec(1 0)\lvec(1 1)\lvec(0 0)
\move(1 0)\lvec(2 0)\lvec(2 1)\lvec(1 0)
\move(2 0)\lvec(3 0)\lvec(3 1)\lvec(2 0)
\htext(0.65 0.12){$1$}
\htext(1.65 0.12){$0$}
\htext(2.65 0.12){$1$}
\htext(0.15 0.52){$0$}
\htext(1.15 0.52){$1$}
\htext(2.15 0.52){$0$}
\htext(2.65 4.12){$1$}\htext(1.65 4.12){$0$}
\htext(1.4 1.35){$2$}
\htext(1.4 1.35){$2$}\htext(2.4 1.35){$2$}
\htext(1.4 2.6){$3$}
\htext(1.4 2.1){$3$}\htext(2.4 2.1){$3$}\htext(2.4 2.6){$3$}
\htext(1.4 3.35){$2$}\htext(2.4 3.35){$2$}
\esegment
\move(16 0)
\bsegment
\htext(-1.5 2){$Y^4=$}
\move(3 0)\dtri
\move(2 0)\dtri
\move(1 0)\dtri
\move(2 4)\lvec(3 5)\lvec(2 5)
\move(2 3)\lvec(2 4)\lvec(3 4)\lvec(3 3)
\move(2 1)\lvec(2 3)\lvec(3 3)\lvec(3 2)
\move(1 2.5)\lvec(1 4)\lvec(2 5)\lvec(2 4)\lvec(1 4)
\move(1 3)\lvec(2 3)
\move(1 4)\lvec(1 5)\lvec(2 5)
\move(1 2)\lvec(1 2.5)\lvec(3 2.5)\lvec(3 2)
\move(1 1)\lvec(1 2)\lvec(3 2)\lvec(3 1)
\move(0 0)\lvec(0 1)\lvec(3 1)\lvec(3 0)
\move(0 0)\lvec(1 0)\lvec(1 1)\lvec(0 0)
\move(1 0)\lvec(2 0)\lvec(2 1)\lvec(1 0)
\move(2 0)\lvec(3 0)\lvec(3 1)\lvec(2 0)
\htext(0.65 0.12){$1$}
\htext(1.65 0.12){$0$}
\htext(2.65 0.12){$1$}
\htext(0.15 0.52){$0$}
\htext(1.15 0.52){$1$}
\htext(2.15 0.52){$0$}
\htext(1.15 4.52){$1$}
\htext(2.15 4.52){$0$}
\htext(1.4 1.35){$2$}
\htext(1.4 1.35){$2$}\htext(2.4 1.35){$2$}
\htext(1.4 2.6){$3$}
\htext(1.4 2.1){$3$}\htext(2.4 2.1){$3$}\htext(2.4 2.6){$3$}
\htext(1.4 3.35){$2$}\htext(2.4 3.35){$2$}
\esegment
\end{texdraw}
}
\newcommand{\FoPatDnO}{
\begin{texdraw}
\fontsize{7}{7}\selectfont
\drawdim em
\setunitscale 1.9
\move(0 0)\dtri \move(-1 0)\dtri \move(-2 0)\dtri \move(-3 0)\dtri
\move(0 0)\rlvec(-4.3 0) \move(0 1)\rlvec(-4.3 0) \move(0 2)\rlvec(-4.3 0)
\move(0 3.5)\rlvec(-4.3 0) \move(0 4.5)\rlvec(-4.3 0)
\move(0 5.5)\rlvec(-4.3 0) \move(0 6.5)\rlvec(-4.3 0)
\move(0 8)\rlvec(-4.3 0) \move(0 9)\rlvec(-4.3 0)
\move(0 10)\rlvec(-4.3 0) \move(0 11)\rlvec(-4.3 0)
\move(0 0)\rlvec(0 11.3) \move(-1 0)\rlvec(0 11.3)
\move(-2 0)\rlvec(0 11.3) \move(-3 0)\rlvec(0 11.3) \move(-4 0)\rlvec(0 11.3)
\move(-1 4.5)\rlvec(1 1) \move(-2 4.5)\rlvec(1 1) \move(-3 4.5)\rlvec(1 1)
\move(-4 4.5)\rlvec(1 1)
\htext(-0.85 0.6){$n$}
\htext(-0.9 1.35){$n\!\!-\!\!2$} \vtext(-0.4 2.6){$\cdots$} \htext(-0.6 3.85){$2$}
\htext(-0.4 4.6){$1$} \htext(-0.85 5.1){$0$} \htext(-0.6 5.85){$2$}
\vtext(-0.4 7.1){$\cdots$} \htext(-0.9 8.35){$n\!\!-\!\!2$}
\htext(-0.85 9.6){$n$}
\htext(-0.9 10.35){$n\!\!-\!\!2$}
\htext(-2.85 0.6){$n$}
\htext(-2.9 1.35){$n\!\!-\!\!2$} \vtext(-2.4 2.6){$\cdots$} \htext(-2.6 3.85){$2$}
\htext(-2.4 4.6){$1$} \htext(-2.85 5.1){$0$} \htext(-2.6 5.85){$2$}
\vtext(-2.4 7.1){$\cdots$} \htext(-2.9 8.35){$n\!\!-\!\!2$}
\htext(-2.85 9.6){$n$}
\htext(-2.9 10.35){$n\!\!-\!\!2$}
\htext(-1.4 0.1){$n$}
\htext(-1.9 1.35){$n\!\!-\!\!2$} \vtext(-1.4 2.6){$\cdots$} \htext(-1.6 3.85){$2$}
\htext(-1.4 4.6){$0$} \htext(-1.85 5.1){$1$} \htext(-1.6 5.85){$2$}
\vtext(-1.4 7.1){$\cdots$} \htext(-1.9 8.35){$n\!\!-\!\!2$}
\htext(-1.4 9.1){$n$}
\htext(-1.9 10.35){$n\!\!-\!\!2$}
\htext(-3.4 0.1){$n$}
\htext(-3.9 1.35){$n\!\!-\!\!2$} \vtext(-3.4 2.6){$\cdots$} \htext(-3.6 3.85){$2$}
\htext(-3.4 4.6){$0$} \htext(-3.85 5.1){$1$} \htext(-3.6 5.85){$2$}
\vtext(-3.4 7.1){$\cdots$} \htext(-3.9 8.35){$n\!\!-\!\!2$}
\htext(-3.4 9.1){$n$}
\htext(-3.9 10.35){$n\!\!-\!\!2$}
\move(-2 0)\rlvec(0.5 0.5)\rmove(0.45 0.45)\rlvec(0.05 0.05)
\move(-1 0)\rlvec(0.05 0.05)\rmove(0.45 0.45)\rlvec(0.5 0.5)
\move(-4 0)\rlvec(0.5 0.5)\rmove(0.45 0.45)\rlvec(0.05 0.05)
\move(-3 0)\rlvec(0.05 0.05)\rmove(0.45 0.45)\rlvec(0.5 0.5)
\move(-2 9)\rlvec(0.5 0.5)\rmove(0.45 0.45)\rlvec(0.05 0.05)
\move(-1 9)\rlvec(0.05 0.05)\rmove(0.45 0.45)\rlvec(0.5 0.5)
\move(-4 9)\rlvec(0.5 0.5)\rmove(0.45 0.45)\rlvec(0.05 0.05)
\move(-3 9)\rlvec(0.05 0.05)\rmove(0.45 0.45)\rlvec(0.5 0.5)
\htext(-0.9 0.1){$n\!\!-\!\!1$} \htext(-0.9 9.1){$n\!\!-\!\!1$}
\htext(-2.9 0.1){$n\!\!-\!\!1$} \htext(-2.9 9.1){$n\!\!-\!\!1$}
\htext(-1.9 0.6){$n\!\!-\!\!1$} \htext(-1.9 9.6){$n\!\!-\!\!1$}
\htext(-3.9 0.6){$n\!\!-\!\!1$} \htext(-3.9 9.6){$n\!\!-\!\!1$}
\end{texdraw}
}
\newcommand{\TPatDnO}{
\begin{texdraw}
\fontsize{7}{7}\selectfont
\drawdim em
\setunitscale 1.9
\move(0 0)\dtri \move(-1 0)\dtri \move(-2 0)\dtri \move(-3 0)\dtri
\move(0 0)\rlvec(-4.3 0) \move(0 1)\rlvec(-4.3 0) \move(0 2)\rlvec(-4.3 0)
\move(0 3.5)\rlvec(-4.3 0) \move(0 4.5)\rlvec(-4.3 0) \move(0 5.5)\rlvec(-4.3 0)
\move(0 6.5)\rlvec(-4.3 0) \move(0 8)\rlvec(-4.3 0) \move(0 9)\rlvec(-4.3 0)
\move(0 10)\rlvec(-4.3 0) \move(0 11)\rlvec(-4.3 0)
\move(0 0)\rlvec(0 11.3) \move(-1 0)\rlvec(0 11.3) \move(-2 0)\rlvec(0 11.3)
\move(-3 0)\rlvec(0 11.3) \move(-4 0)\rlvec(0 11.3)
\move(-1 4.5)\rlvec(1 1) \move(-2 4.5)\rlvec(1 1) \move(-3 4.5)\rlvec(1 1)
\move(-4 4.5)\rlvec(1 1)
\htext(-0.4 0.1){$n$}
\htext(-0.9 1.35){$n\!\!-\!\!2$} \vtext(-0.4 2.6){$\cdots$} \htext(-0.6 3.85){$2$}
\htext(-0.4 4.6){$1$} \htext(-0.85 5.1){$0$} \htext(-0.6 5.85){$2$}
\vtext(-0.4 7.1){$\cdots$} \htext(-0.9 8.35){$n\!\!-\!\!2$}
\htext(-0.4 9.1){$n$}
\htext(-0.9 10.35){$n\!\!-\!\!2$}
\htext(-2.4 0.1){$n$}
\htext(-2.9 1.35){$n\!\!-\!\!2$} \vtext(-2.4 2.6){$\cdots$} \htext(-2.6 3.85){$2$}
\htext(-2.4 4.6){$1$} \htext(-2.85 5.1){$0$} \htext(-2.6 5.85){$2$}
\vtext(-2.4 7.1){$\cdots$} \htext(-2.9 8.35){$n\!\!-\!\!2$}
\htext(-2.4 9.1){$n$}
\htext(-2.9 10.35){$n\!\!-\!\!2$}
\htext(-1.85 0.6){$n$}
\htext(-1.9 1.35){$n\!\!-\!\!2$} \vtext(-1.4 2.6){$\cdots$} \htext(-1.6 3.85){$2$}
\htext(-1.4 4.6){$0$} \htext(-1.85 5.1){$1$} \htext(-1.6 5.85){$2$}
\vtext(-1.4 7.1){$\cdots$} \htext(-1.9 8.35){$n\!\!-\!\!2$}
\htext(-1.85 9.6){$n$}
\htext(-1.9 10.35){$n\!\!-\!\!2$}
\htext(-3.85 0.6){$n$}
\htext(-3.9 1.35){$n\!\!-\!\!2$} \vtext(-3.4 2.6){$\cdots$} \htext(-3.6 3.85){$2$}
\htext(-3.4 4.6){$0$} \htext(-3.85 5.1){$1$} \htext(-3.6 5.85){$2$}
\vtext(-3.4 7.1){$\cdots$} \htext(-3.9 8.35){$n\!\!-\!\!2$}
\htext(-3.85 9.6){$n$}
\htext(-3.9 10.35){$n\!\!-\!\!2$}
\move(-1 0)\rlvec(0.5 0.5)\rmove(0.45 0.45)\rlvec(0.05 0.05)
\move(-2 0)\rlvec(0.05 0.05)\rmove(0.45 0.45)\rlvec(0.5 0.5)
\move(-3 0)\rlvec(0.5 0.5)\rmove(0.45 0.45)\rlvec(0.05 0.05)
\move(-4 0)\rlvec(0.05 0.05)\rmove(0.45 0.45)\rlvec(0.5 0.5)
\move(-1 9)\rlvec(0.5 0.5)\rmove(0.45 0.45)\rlvec(0.05 0.05)
\move(-2 9)\rlvec(0.05 0.05)\rmove(0.45 0.45)\rlvec(0.5 0.5)
\move(-3 9)\rlvec(0.5 0.5)\rmove(0.45 0.45)\rlvec(0.05 0.05)
\move(-4 9)\rlvec(0.05 0.05)\rmove(0.45 0.45)\rlvec(0.5 0.5)
\htext(-3.9 9.1){$n\!\!-\!\!1$} \htext(-3.9 0.1){$n\!\!-\!\!1$}
\htext(-2.9 9.6){$n\!\!-\!\!1$} \htext(-2.9 0.6){$n\!\!-\!\!1$}
\htext(-1.9 0.1){$n\!\!-\!\!1$} \htext(-1.9 9.1){$n\!\!-\!\!1$}
\htext(-0.9 9.6){$n\!\!-\!\!1$} \htext(-0.9 0.6){$n\!\!-\!\!1$}
\end{texdraw}
}
\newcommand{\SPatDnO}{
\begin{texdraw}
\fontsize{7}{7}\selectfont
\drawdim em
\setunitscale 1.9
\move(0 0)\dtri \move(-1 0)\dtri \move(-2 0)\dtri \move(-3 0)\dtri
\move(0 0)\rlvec(-4.3 0) \move(0 1)\rlvec(-4.3 0) \move(0 2)\rlvec(-4.3 0)
\move(0 3.5)\rlvec(-4.3 0) \move(0 4.5)\rlvec(-4.3 0)
\move(0 5.5)\rlvec(-4.3 0) \move(0 6.5)\rlvec(-4.3 0)
\move(0 8)\rlvec(-4.3 0) \move(0 9)\rlvec(-4.3 0) \move(0 10)\rlvec(-4.3 0)
\move(0 11)\rlvec(-4.3 0) \move(0 0)\rlvec(0 11.3) \move(-1 0)\rlvec(0 11.3)
\move(-2 0)\rlvec(0 11.3) \move(-3 0)\rlvec(0 11.3) \move(-4 0)\rlvec(0 11.3)
\move(-1 0)\rlvec(1 1) \move(-2 0)\rlvec(1 1)
\move(-3 0)\rlvec(1 1) \move(-4 0)\rlvec(1 1)
\move(-1 9)\rlvec(1 1) \move(-2 9)\rlvec(1 1)
\move(-3 9)\rlvec(1 1) \move(-4 9)\rlvec(1 1)
\htext(-0.4 0.1){$0$} \htext(-0.85 0.6){$1$} \htext(-0.6 1.35){$2$}
\vtext(-0.4 2.6){$\cdots$} \htext(-0.9 3.85){$n\!\!-\!\!2$}
\htext(-0.9 5.85){$n\!\!-\!\!2$} \htext(-0.6 8.35){$2$}
\htext(-0.4 9.1){$0$} \htext(-0.85 9.6){$1$} \htext(-0.6 10.35){$2$}
\htext(-2.4 0.1){$0$} \htext(-2.85 0.6){$1$} \htext(-2.6 1.35){$2$}
\vtext(-2.4 2.6){$\cdots$} \htext(-2.9 3.85){$n\!\!-\!\!2$}
\htext(-2.9 5.85){$n\!\!-\!\!2$} \htext(-2.6 8.35){$2$}
\htext(-2.4 9.1){$0$} \htext(-2.85 9.6){$1$} \htext(-2.6 10.35){$2$}
\htext(-1.4 0.1){$1$} \htext(-1.85 0.6){$0$} \htext(-1.6 1.35){$2$}
\vtext(-1.4 2.6){$\cdots$} \htext(-1.9 3.85){$n\!\!-\!\!2$}
\htext(-1.9 5.85){$n\!\!-\!\!2$} \htext(-1.6 8.35){$2$}
\htext(-1.4 9.1){$1$} \htext(-1.85 9.6){$0$} \htext(-1.6 10.35){$2$}
\htext(-3.4 0.1){$1$} \htext(-3.85 0.6){$0$} \htext(-3.6 1.35){$2$}
\vtext(-3.4 2.6){$\cdots$} \htext(-3.9 3.85){$n\!\!-\!\!2$}
\htext(-3.9 5.85){$n\!\!-\!\!2$} \htext(-3.6 8.35){$2$}
\htext(-3.4 9.1){$1$} \htext(-3.85 9.6){$0$} \htext(-3.6 10.35){$2$}
\htext(-0.4 4.6){$n$} \htext(-2.4 4.6){$n$} \htext(-1.85 5.1){$n$}
\htext(-3.85 5.1){$n$}
\move(-1 4.5)\rlvec(0.5 0.5)\rmove(0.45 0.45)\rlvec(0.05 0.05)
\move(-2 4.5)\rlvec(0.05 0.05)\rmove(0.45 0.45)\rlvec(0.5 0.5)
\move(-3 4.5)\rlvec(0.5 0.5)\rmove(0.45 0.45)\rlvec(0.05 0.05)
\move(-4 4.5)\rlvec(0.05 0.05)\rmove(0.45 0.45)\rlvec(0.5 0.5)
\htext(-1.9 4.6){$n\!\!-\!\!1$} \htext(-3.9 4.6){$n\!\!-\!\!1$}
\htext(-0.9 5.1){$n\!\!-\!\!1$} \htext(-2.9 5.1){$n\!\!-\!\!1$}
\vtext(-0.4 7.1){$\cdots$} \vtext(-1.4 7.1){$\cdots$}
\vtext(-2.4 7.1){$\cdots$} \vtext(-3.4 7.1){$\cdots$}
\end{texdraw}
}
\newcommand{\FPatDnO}{
\begin{texdraw}
\fontsize{7}{7}\selectfont
\drawdim em
\setunitscale 1.9
\move(0 0)\dtri \move(-1 0)\dtri \move(-2 0)\dtri \move(-3 0)\dtri
\move(0 0)\rlvec(-4.3 0) \move(0 1)\rlvec(-4.3 0) \move(0 2)\rlvec(-4.3 0)
\move(0 3.5)\rlvec(-4.3 0) \move(0 4.5)\rlvec(-4.3 0)
\move(0 5.5)\rlvec(-4.3 0) \move(0 6.5)\rlvec(-4.3 0)
\move(0 8)\rlvec(-4.3 0) \move(0 9)\rlvec(-4.3 0)
\move(0 10)\rlvec(-4.3 0) \move(0 11)\rlvec(-4.3 0)
\move(0 0)\rlvec(0 11.3) \move(-1 0)\rlvec(0 11.3) \move(-2 0)\rlvec(0 11.3)
\move(-3 0)\rlvec(0 11.3) \move(-4 0)\rlvec(0 11.3)
\move(-1 0)\rlvec(1 1) \move(-2 0)\rlvec(1 1) \move(-3 0)\rlvec(1 1)
\move(-4 0)\rlvec(1 1) \move(-1 9)\rlvec(1 1) \move(-2 9)\rlvec(1 1)
\move(-3 9)\rlvec(1 1) \move(-4 9)\rlvec(1 1)
\htext(-0.4 0.1){$1$} \htext(-0.85 0.6){$0$} \htext(-0.6 1.35){$2$}
\vtext(-0.4 2.6){$\cdots$} \htext(-0.9 3.85){$n\!\!-\!\!2$}
\htext(-0.9 5.85){$n\!\!-\!\!2$} \htext(-0.6 8.35){$2$}
\htext(-0.4 9.1){$1$} \htext(-0.85 9.6){$0$} \htext(-0.6 10.35){$2$}
\htext(-2.4 0.1){$1$} \htext(-2.85 0.6){$0$} \htext(-2.6 1.35){$2$}
\vtext(-2.6 2.6){$\cdots$} \htext(-2.9 3.85){$n\!\!-\!\!2$}
\htext(-2.9 5.85){$n\!\!-\!\!2$} \htext(-2.6 8.35){$2$}
\htext(-2.4 9.1){$1$} \htext(-2.85 9.6){$0$} \htext(-2.6 10.35){$2$}
\htext(-1.4 0.1){$0$} \htext(-1.85 0.6){$1$} \htext(-1.6 1.35){$2$}
\vtext(-1.4 2.75){$\cdots$} \htext(-1.9 3.85){$n\!\!-\!\!2$}
\htext(-1.9 5.85){$n\!\!-\!\!2$} \htext(-1.6 8.35){$2$}
\htext(-1.4 9.1){$0$} \htext(-1.85 9.6){$1$} \htext(-1.6 10.35){$2$}
\htext(-3.4 0.1){$0$} \htext(-3.85 0.6){$1$} \htext(-3.6 1.35){$2$}
\vtext(-3.4 2.6){$\cdots$} \htext(-3.9 3.85){$n\!\!-\!\!2$}
\htext(-3.9 5.85){$n\!\!-\!\!2$} \htext(-3.6 8.35){$2$}
\htext(-3.4 9.1){$0$} \htext(-3.85 9.6){$1$} \htext(-3.6 10.35){$2$}
\htext(-0.4 4.6){$n$} \htext(-2.4 4.6){$n$} \htext(-1.85 5.1){$n$}
\htext(-3.85 5.1){$n$}
\move(-1 4.5)\rlvec(0.5 0.5)\rmove(0.45 0.45)\rlvec(0.05 0.05)
\move(-2 4.5)\rlvec(0.05 0.05)\rmove(0.45 0.45)\rlvec(0.5 0.5)
\move(-3 4.5)\rlvec(0.5 0.5)\rmove(0.45 0.45)\rlvec(0.05 0.05)
\move(-4 4.5)\rlvec(0.05 0.05)\rmove(0.45 0.45)\rlvec(0.5 0.5)
\htext(-1.9 4.6){$n\!\!-\!\!1$} \htext(-3.9 4.6){$n\!\!-\!\!1$}
\htext(-0.9 5.1){$n\!\!-\!\!1$} \htext(-2.9 5.1){$n\!\!-\!\!1$}
\vtext(-0.4 7.1){$\cdots$} \vtext(-1.4 7.1){$\cdots$}
\vtext(-2.4 7.1){$\cdots$} \vtext(-3.4 7.1){$\cdots$}
\end{texdraw}
}
\newcommand{\sbk}[1]
{
\fontsize{9}{9}\selectfont
\xy (-1.5,1.5)*{}="T1"; (1.5,1.5)*{}="T2"; (-1.5,-1.5)*{}="B1"; (1.5,-1.5)*{}="B2";
"T1"; "T2" **\dir{-};"T1"; "B1" **\dir{-};"T2"; "B2" **\dir{-}; "B1"; "B2" **\dir{-};
(-0.2,0)*{#1};
\endxy
\fontsize{10}{10}\selectfont
}
\newcommand{\Bhbk}[1]
{
\fontsize{5}{5}\selectfont
\xy (-1.5,1.5)*{}="T1"; (1.5,1.5)*{}="T2"; (-1.5,0)*{}="B1"; (1.5,0)*{}="B2";
"T1"; "T2" **\dir{-};"T1"; "B1" **\dir{-};"T2"; "B2" **\dir{-}; "B1"; "B2" **\dir{-};
(-0.2,0.6)*{#1};
\endxy
\fontsize{10}{10}\selectfont
}
\newcommand{\hbk}[1]
{
\fontsize{5}{5}\selectfont
\xy (-1.5,1.5)*{}="T1"; (1.5,1.5)*{}="T2"; (-1.5,0)*{}="B1"; (1.5,0)*{}="B2";
"T1"; "T2" **\dir{-};"T1"; "B1" **\dir{-};"T2"; "B2" **\dir{-};
(-0.2,0.6)*{#1};
\endxy
\fontsize{10}{10}\selectfont
}
\newcommand{\sone}
{\xy (0,0)*++{\Bhbk{0}};\endxy}
\newcommand{\stwo}
{\xy (0,0)*++{\Bhbk{0}};(0,2.8)*++{\sbk{1}};\endxy}
\newcommand{\sthree}
{\xy (0,0)*++{\Bhbk{0}};(0,2.8)*++{\sbk{1}};(0,4.5)*++{\hbk{2}};\endxy}
\newcommand{\sfour}
{\xy (0,0)*++{\Bhbk{0}};(0,2.8)*++{\sbk{1}};(0,4.5)*++{\hbk{2}};(0,6.1)*++{\hbk{2}};\endxy}
\newcommand{\sfive}
{\xy (0,0)*++{\Bhbk{0}};(0,2.8)*++{\sbk{1}};(0,4.5)*++{\hbk{2}};(0,6.1)*++{\hbk{2}};(0,8.9)*++{\sbk{1}};\endxy}
\newcommand{\ssix}
{\xy (0,0)*++{\Bhbk{0}};(0,2.8)*++{\sbk{1}};(0,4.5)*++{\hbk{2}};(0,6.1)*++{\hbk{2}};(0,8.9)*++{\sbk{1}};
     (0,10.7)*++{\hbk{0}};\endxy}
\newcommand{\sseven}
{\xy (0,0)*++{\Bhbk{0}};(0,2.8)*++{\sbk{1}};(0,4.5)*++{\hbk{2}};(0,6.1)*++{\hbk{2}};(0,8.9)*++{\sbk{1}};
     (0,10.7)*++{\hbk{0}};(0,12.3)*++{\hbk{0}};\endxy}
\newcommand{\seight}
{\xy (0,0)*++{\Bhbk{0}};(0,2.8)*++{\sbk{1}};(0,4.5)*++{\hbk{2}};(0,6.1)*++{\hbk{2}};(0,8.9)*++{\sbk{1}};
     (0,10.7)*++{\hbk{0}};(0,12.3)*++{\hbk{0}};(0,15.1)*++{\sbk{1}};\endxy}
\newcommand{\snine}
{\xy (0,0)*++{\Bhbk{0}};(0,2.8)*++{\sbk{1}};(0,4.5)*++{\hbk{2}};(0,6.1)*++{\hbk{2}};(0,8.9)*++{\sbk{1}};
     (0,10.7)*++{\hbk{0}};(0,12.3)*++{\hbk{0}};(0,15.1)*++{\sbk{1}};(0,16.9)*++{\hbk{2}};\endxy}
\newcommand{\Go}{
{\xy (0,-5)*++{\sone};\endxy}
}
\newcommand{\Gt}{
{\xy (0,-3.4)*++{\stwo};\endxy}
}
\newcommand{\Gto}{
{\xy (-3,-4.8)*++{\sone};(0,-3.4)*++{\stwo};\endxy}
}
\newcommand{\Gth}{
{\xy (0,-2.6)*++{\sthree};\endxy}
}
\newcommand{\Gtho}{
{\xy (-3,-4.8)*++{\sone};(0,-2.6)*++{\sthree};\endxy}
}
\newcommand{\Gf}{
{\xy (0,-1.8)*++{\sfour};\endxy}
}
\newcommand{\Gtht}{
{\xy (-3,-3.4)*++{\stwo};(0,-2.6)*++{\sthree};\endxy}
}
\newcommand{\Gfo}{
{\xy (-3,-4.8)*++{\sone};(0,-1.8)*++{\sfour};\endxy}
}
\newcommand{\Gfi}{
{\xy (0,-0.3)*++{\sfive};\endxy}
}
\newcommand{\Gfio}{
{\xy (-3,-4.8)*++{\sone};(0,-0.3)*++{\sfive};\endxy}
}
\newcommand{\Gft}{
{\xy (-3,-3.4)*++{\stwo};(0,-0.3)*++{\sfive};\endxy}
}
\newcommand{\Gthto}{
{\xy (-6,-4.8)*++{\sone};(-3,-3.4)*++{\stwo};(0,-2.6)*++{\sthree};\endxy}
}
\newcommand{\Gs}{
{\xy (0,0.5)*++{\ssix};\endxy}
}
\newcommand{\Gso}{
{\xy (-3,-5)*++{\sone};(0,0.5)*++{\ssix};\endxy}
}
\newcommand{\Gfit}{
{\xy (-3,-3.4)*++{\stwo};(0,-0.3)*++{\sfive};\endxy}
}
\newcommand{\Gfth}{
{\xy (-3,-2.6)*++{\sthree};(0,-1.8)*++{\sfour};\endxy}
}
\newcommand{\Gfto}{
{\xy (-6,-4.8)*++{\sone};(-3,-3.4)*++{\stwo};(0,-1.8)*++{\sfour};\endxy}
}
\newcommand{\Gse}{
{\xy (0,1.3)*++{\sseven};\endxy}
}
\newcommand{\Gseo}{
{\xy (-3,-4.8)*++{\sone};(0,1.3)*++{\sseven};\endxy}
}
\newcommand{\Gst}{
{\xy (-3,-3.4)*++{\stwo};(0,0.5)*++{\ssix};\endxy}
}
\newcommand{\Gftho}{
{\xy (-6,-5)*++{\sone};(-3,-2.6)*++{\sthree};(0,-1.8)*++{\sfour};\endxy}
}
\newcommand{\Gfith}{
{\xy (-3,-2.6)*++{\sthree};(0,-0.3)*++{\sfive};\endxy}
}
\newcommand{\Gfito}{
{\xy (-6,-4.8)*++{\sone};(-3,-3.4)*++{\stwo};(0,-0.3)*++{\sfive};\endxy}
}
\newcommand{\Gei}{
{\xy (0,2.8)*++{\seight};\endxy}
}
\newcommand{\Geio}{
{\xy (-3,-4.8)*++{\sone};(0,2.8)*++{\seight};\endxy}
}
\newcommand{\Gset}{
{\xy (-3,-3.4)*++{\stwo};(0,1.3)*++{\sseven};\endxy}
}
\newcommand{\Gsth}{
{\xy (-3,-2.6)*++{\sthree};(0,0.5)*++{\ssix};\endxy}
}
\newcommand{\Gsto}{
{\xy (-6,-4.8)*++{\sone};(-3,-3.4)*++{\stwo};(0,0.5)*++{\ssix};\endxy}
}
\newcommand{\Gfif}{
{\xy (-3,-1.8)*++{\sfour};(0,-0.3)*++{\sfive};\endxy}
}
\newcommand{\Gfitho}{
{\xy (-6,-4.9)*++{\sone};(-3,-2.6)*++{\sthree};(0,-0.4)*++{\sfive};\endxy}
}
\newcommand{\Gftht}{
{\xy (-6,-3.4)*++{\stwo};(-3,-2.6)*++{\sthree};(0,-1.8)*++{\sfour};\endxy}
}
\newcommand{\Gn}{
{\xy (0,3.6)*++{\snine};\endxy}
}
\newcommand{\GraphF}
{
\fontsize{16}{16}\selectfont
\scalebox{.63}{\xymatrix@R=-2pc@H=-5pc{
 \emptyset \ar[d]^0 & & & & & &\Gei\ar[r]^2 \ar[dr]^0 &\Gn \cdots \\
\Go \ar[dd]^1 & & &\Gfi \ar[r]^0 &\Gs \ar[r]^0 &\Gse \ar[ur]^1 \ar[r]^0 &\Gseo\ar[r]^1 &\Geio\cdots \\
 &\Gth \ar[r]^2 \ar[ddr]^0 &\Gf \ar[ur]^1 \ar[dr]^0 & & &\Gso\ar[r]^1 &\Gst\ar[dr]^2 \ar[r]^0 &\Gset\cdots \\
\Gt \ar[ur]^2 \ar[dr]^0& & &\Gfo \ar[r]^1 &\Gfio \ar[r]^1 \ar[ur]^0 &\Gfit\ar[dr]^2 \ar[ur]^0 & &\Gsth\cdots \\
 &\Gto\ar[r]^2 &\Gtho\ar[ur]^2 \ar[dr]^1 & &\Gft\ar[r]^2 \ar[dr]^0 &\Gfth\ar[r]^1 \ar[dr]^0 &\Gfith\ar[r]^2 \ar[ur]^0 &\Gfif\cdots \\
 & & &\Gtht\ar[ur]^2 \ar[r]^0 &\Gthto\ar[r]^2 &\Gfto\ar[r]^2\ar[dr]^1 &\Gftho\ar[r]^1 &\Gftht\cdots \\
 & & & & & &\Gfito\ar[dr]^2 \ar[r]^0 &\Gsto\cdots \\
 & & & & & & &\Gfitho\cdots
}}
\fontsize{10}{10}\selectfont
}
\newcommand{\shade}
{\xy
(0,0)*{}="B0";(-6,0)*{}="B1";(-12,0)*{}="B2";(-18,0)*{}="B3";(-24,0)*{}="B4";
(-1.5,-1.5)*{}="BB0";(-7.5,-1.5)*{}="BB1";(-13.5,-1.5)*{}="BB2";(-19.5,-1.5)*{}="BB3";(-25.5,-1.5)*{}="BB4";
"B0"; "BB0" **\dir{-};"B1"; "BB1" **\dir{-};"B2"; "BB2" **\dir{-};"B3"; "BB3" **\dir{-};"B4"; "BB4" **\dir{-};
"BB0"; "BB4"+(-20,0) **\dir{-};
\endxy}
\newcommand{\hhbox}[1]
{\xy
(0,-3)*{}="T1";(6,-3)*{}="T2";
(0,-6)*{}="B1";(6,-6)*{}="B2";
(3,0)*{}="TR1";(9,0)*{}="TR2";
(9,-3)*{}="BR2";
"T1"; "T2" **\dir{-}; "T1"; "B1" **\dir{-};
"T1"; "TR1" **\dir{-}; "T2"; "TR2" **\dir{-};
"B2"; "BR2" **\dir{-}; "BR2"; "TR2" **\dir{-};
"TR1"; "TR2" **\dir{-};
"B1"; "B2" **\dir{-}; "T2"; "B2" **\dir{-};
(3,-4.5)*{#1};
\endxy}
\newcommand{\ubox}[1]
{\xy
(0,0)*{}="T1";(6,0)*{}="T2";
(0,-6)*{}="B1";(6,-6)*{}="B2";
(3,3)*{}="TR1";(9,3)*{}="TR2";
(9,-3)*{}="BR2";
"T1"; "T2" **\dir{-}; "T1"; "B1" **\dir{-};
"T1"; "TR1" **\dir{-}; "T2"; "TR2" **\dir{-};
"B2"; "BR2" **\dir{-}; "BR2"; "TR2" **\dir{-};
"TR1"; "TR2" **\dir{-};
"B1"; "B2" **\dir{-}; "T2"; "B2" **\dir{-};
(3,-3)*{#1};
\endxy}
\newcommand{\hhboxp}[1]
{\xy
(0,-3)*{}="T1";(6,-3)*{}="T2";
(0,-6)*{}="B1";(6,-6)*{}="B2";
(3,0)*{}="TR1";(9,0)*{}="TR2";
(9,-3)*{}="BR2";
"T1"; "T2" **\dir{-}; "T1"; "B1" **\dir{-};
"T1"; "TR1" **\dir{-}; 
"TR1"; "TR2" **\dir{-};
"B1"; "B2" **\dir{-}; "T2"; "B2" **\dir{-};
(3,-4.5)*{#1};
\endxy}
\newcommand{\hhboxpl}[1]
{\xy
(-20,-3)*{}="T0"; (-20,-6)*{}="B0";
(0,-3)*{}="T1";(6,-3)*{}="T2";
(0,-6)*{}="B1";(6,-6)*{}="B2";
(-17,0)*{}="TR0";(3,0)*{}="TR1";(9,0)*{}="TR2";
(9,-3)*{}="BR2";
"T1"; "T0" **\dir{-};"B1"; "B0" **\dir{-};
"T1"; "T2" **\dir{-}; "T1"; "B1" **\dir{-};
"T1"; "TR1" **\dir{-}; 
"TR1"; "TR2" **\dir{-}; "TR1"; "TR0" **\dir{-};
"B1"; "B2" **\dir{-}; "T2"; "B2" **\dir{-};
(3,-4.5)*{#1}; (-9,-4.5)*{\cdots};
\endxy}
\newcommand{\htbox}[1]
{\xy
(0,0)*{}="T1";(6,0)*{}="T2";
(0,-6)*{}="B1";(6,-6)*{}="B2";
(1.5,1.5)*{}="TR1";(7.5,1.5)*{}="TR2";
(7.5,-4.5)*{}="BR2";
"T1"; "T2" **\dir{-}; "T1"; "B1" **\dir{-};
"T1"; "TR1" **\dir{-}; "T2"; "TR2" **\dir{-};
"B2"; "BR2" **\dir{-}; "BR2"; "TR2" **\dir{-};
"TR1"; "TR2" **\dir{-};
"B1"; "B2" **\dir{-}; "T2"; "B2" **\dir{-};
(3,-3)*{#1};
\endxy}
\newcommand{\htboxs}[1]
{\xy
(0,0)*{}="T1";(6,0)*{}="T2";
(0,-6)*{}="B1";(6,-6)*{}="B2";
(1.5,1.5)*{}="TR1";(7.5,1.5)*{}="TR2";
(7.5,-4.5)*{}="BR2";
"T1"; "T2" **\dir{-}; "T1"; "B1" **\dir{-};
"T1"; "TR1" **\dir{-}; "T2"; "TR2" **\dir{-};
"B2"; "BR2" **\dir{-}; "BR2"; "TR2" **\dir{-};
"TR1"; "TR2" **\dir{-};
"B1"; "B2" **\dir{-}; "T2"; "B2" **\dir{-};
(3,-3)*{\scriptstyle #1};
\endxy}
\newcommand{\htboxp}[1]
{\xy
(0,0)*{}="T1";(6,0)*{}="T2";
(0,-6)*{}="B1";(6,-6)*{}="B2";
(1.5,1.5)*{}="TR1";(7.5,1.5)*{}="TR2";
(7.5,-4.5)*{}="BR2";
"T1"; "T2" **\dir{-}; "T1"; "B1" **\dir{-};
"T1"; "TR1" **\dir{-}; "T2"; "TR2" **\dir{-};
"TR1"; "TR2" **\dir{-};
"B1"; "B2" **\dir{-}; "T2"; "B2" **\dir{-};
(3,-3)*{#1};
\endxy}
\newcommand{\htboxps}[1]
{\xy
(0,0)*{}="T1";(6,0)*{}="T2";
(0,-6)*{}="B1";(6,-6)*{}="B2";
(1.5,1.5)*{}="TR1";(7.5,1.5)*{}="TR2";
(7.5,-4.5)*{}="BR2";
"T1"; "T2" **\dir{-}; "T1"; "B1" **\dir{-};
"T1"; "TR1" **\dir{-}; "T2"; "TR2" **\dir{-};
"TR1"; "TR2" **\dir{-};
"B1"; "B2" **\dir{-}; "T2"; "B2" **\dir{-};
(3,-3)*{\scriptstyle #1};
\endxy}
\newcommand{\htboxpp}[1]
{\xy
(-20,0)*{}="T0";(0,0)*{}="T1";(6,0)*{}="T2";
(-20,-6)*{}="B0";(0,-6)*{}="B1";(6,-6)*{}="B2";
(-18.5,1.5)*{}="TR0";(1.5,1.5)*{}="TR1";(7.5,1.5)*{}="TR2";
(7.5,-4.5)*{}="BR2";
"TR1"; "TR0" **\dir{-};
"T1"; "T0" **\dir{-};"B1"; "B0" **\dir{-};
"T1"; "T2" **\dir{-}; "T1"; "B1" **\dir{-};
"T1"; "TR1" **\dir{-}; "T2"; "TR2" **\dir{-};
"TR1"; "TR2" **\dir{-};
"B1"; "B2" **\dir{-}; "T2"; "B2" **\dir{-};
(3,-3)*{#1}; (-9,-3)*{\cdots};
\endxy}
\newcommand{\htboxpps}[1]
{\xy
(-20,0)*{}="T0";(0,0)*{}="T1";(6,0)*{}="T2";
(-20,-6)*{}="B0";(0,-6)*{}="B1";(6,-6)*{}="B2";
(-18.5,1.5)*{}="TR0";(1.5,1.5)*{}="TR1";(7.5,1.5)*{}="TR2";
(7.5,-4.5)*{}="BR2";
"TR1"; "TR0" **\dir{-};
"T1"; "T0" **\dir{-};"B1"; "B0" **\dir{-};
"T1"; "T2" **\dir{-}; "T1"; "B1" **\dir{-};
"T1"; "TR1" **\dir{-}; "T2"; "TR2" **\dir{-};
"TR1"; "TR2" **\dir{-};
"B1"; "B2" **\dir{-}; "T2"; "B2" **\dir{-};
(3,-3)*{\scriptstyle #1}; (-9,-3)*{\cdots};
\endxy}
\newcommand{\Gyzo}
{\xy (0,0)*++{\htbox{1}}; (-6,0)*++{\htboxp{0}}; (-12,0)*++{\htboxp{1}}; (-28,0)*++{\htboxpp{0}};
(-20.3,-4.5)*++{\shade};
 \endxy}
\newcommand{\Gyoz}
{\xy (0,0)*++{\htbox{0}}; (-6,0)*++{\htboxp{1}}; (-12,0)*++{\htboxp{0}}; (-28,0)*++{\htboxpp{1}};
(-20.3,-4.5)*++{\shade}; \endxy}
\newcommand{\Gynnmo}
{\xy (0,0)*++{\htbox{n}}; (-6,0)*++{\htboxps{n-1}}; (-12,0)*++{\htboxp{n}}; (-28,0)*++{\htboxpps{n-1}};
(-20.3,-4.5)*++{\shade};  \endxy}
\newcommand{\Gynmon}
{\xy (0,0)*++{\htboxs{n-1}}; (-6,0)*++{\htboxp{n}}; (-12,0)*++{\htboxps{n-1}}; (-28,0)*++{\htboxpp{n}};
(-20.3,-4.5)*++{\shade};  \endxy}
\newcommand{\Gyzz}
{\xy (0,0)*++{\hhbox{0}}; (-6,0)*++{\hhboxp{0}}; (-12,0)*++{\hhboxp{0}}; (-28,0)*++{\hhboxpl{0}};
  \endxy}
\newcommand{\Gynn}
{\xy (0,0)*++{\hhbox{n}}; (-6,0)*++{\hhboxp{n}}; (-12,0)*++{\hhboxp{n}}; (-28,0)*++{\hhboxpl{n}};
  \endxy}
\newcommand{\gyzz}
{
 \fontsize{8}{8}\selectfont
\begin{matrix}D_{n+1}^{(2)},A_{2n}^{(2)} \\ \Lambda_0 \end{matrix}
 \fontsize{10}{10}\selectfont
}
\newcommand{\gynn}
{
 \fontsize{8}{8}\selectfont
\begin{matrix}D_{n+1}^{(2)},B_{n}^{(1)} \\ \Lambda_n \end{matrix}
 \fontsize{10}{10}\selectfont
}
\newcommand{\gyoz}
{
 \fontsize{8}{8}\selectfont
\begin{matrix}A_{2n-1}^{(2)},B_{n}^{(1)},D_{n}^{(1)}  \\ \Lambda_0 \end{matrix}
 \fontsize{10}{10}\selectfont
}
\newcommand{\gyzo}
{
 \fontsize{8}{8}\selectfont
\begin{matrix}A_{2n-1}^{(2)},B_{n}^{(1)},D_{n}^{(1)}  \\ \Lambda_1 \end{matrix}
 \fontsize{10}{10}\selectfont
}
\newcommand{\gynnmo}
{
 \fontsize{8}{8}\selectfont
\begin{matrix}D_{n}^{(1)}  \\ \Lambda_{n-1} \end{matrix}
 \fontsize{10}{10}\selectfont
}
\newcommand{\gynmon}
{
 \fontsize{8}{8}\selectfont
\begin{matrix}D_{n}^{(1)}  \\ \Lambda_{n} \end{matrix}
 \fontsize{10}{10}\selectfont
}
\newcommand{\youngwall}
{\xy
(0,0)*{}="B1";(6,0)*{}="B2";
(0,24)*{}="T1";(6,24)*{}="T2";
"T1"; "B1" **\dir{-};"T2"; "B2" **\dir{-};"T1"; "T2" **\dir{-};"B2"; "B1" **\dir{-};
"B1"+(-6,0);"B1"+(-6,15) **\dir{-};
"B1"+(-6,0);"B1" **\dir{-};
"B1"+(-6,6);"B1"+(0,6) **\dir{-};
"B1"+(-6,12);"B1"+(0,12) **\dir{-};
"B1"+(-6,15);"B1"+(0,15) **\dir{-};
"B1"+(-6,15);"B1"+(-3,18) **\dir{-};
"B1"+(0,18);"B1"+(-3,18) **\dir{-};
"B1"+(-12,0);"B1"+(-12,6) **\dir{-};
"B1"+(-12,0);"B1"+(-6,0) **\dir{-};
"B1"+(-12,6);"B1"+(-6,6) **\dir{-};
"B1"+(-12,6);"B1"+(-9,9)**\dir{-};
"B1"+(-10.5,7.5);"B1"+(-23.5,7.5)**\dir{-};
"B1"+(-6,8);"B1"+(-23,8)**\dir{.};
"B1"+(-6,8.5);"B1"+(-22.5,8.5)**\dir{.};
"B1"+(-9,9);"B1"+(-22.5,9)**\dir{-};
"B1"+(-16.5,7.5);"B1"+(-16.5,1.5) **\dir{-};
"B1"+(-16.5,7.5);"B1"+(-15,9) **\dir{-};
"B1"+(-22.5,7.5);"B1"+(-22.5,1.5) **\dir{-}; 
"B1"+(-22.5,7.5);"B1"+(-21,9) **\dir{-};
"B1"+(-12,1.5);"B1"+(-23.5,1.5) **\dir{-};
"B1"+(-6,9); "B1"+(-9,9)**\dir{-};
"B1"+(-6,7.5);"B1"+(-10.5,7.5)**\dir{-};
"B1"+(0,6);"B2"+(0,6) **\dir{-};
"B1"+(0,12);"B2"+(0,12) **\dir{-};
"B1"+(0,15);"B2"+(0,15) **\dir{-};
"B1"+(0,18);"B2"+(0,18) **\dir{-};
"B2";"B2"+(3,3) **\dir{-};
"B2"+(1.5,1.5);"B2"+(1.5,7.5) **\dir{-};
"B2"+(2,2);"B2"+(2,8) **\dir{.};
"B2"+(2.5,2.5);"B2"+(2.5,8.5) **\dir{.};
"B2"+(0,6);"B2"+(3,9) **\dir{-};
"B2"+(0,12);"B2"+(3,15) **\dir{-};
"B2"+(0,15);"B2"+(3,18) **\dir{-};
"B2"+(0,18);"B2"+(3,21) **\dir{-};
"B2"+(0,24);"B2"+(3,27) **\dir{-};
"B2"+(3,3);"B2"+(3,33) **\dir{-};
"B2"+(1.5,31.5);"B2"+(3,33) **\dir{-};
"B2"+(1.5,31.5);"B2"+(1.5,25.5) **\dir{-};
"B2"+(-4.5,25.5);"B2"+(1.5,25.5) **\dir{-};
"B2"+(-4.5,25.5);"B2"+(-4.5,31.5) **\dir{-};
"B2"+(-4.5,31.5);"B2"+(1.5,31.5) **\dir{-};
"B2"+(-4.5,25.5);"B2"+(-6,24) **\dir{-};
"B2"+(-4.5,31.5);"B2"+(-3,33) **\dir{-};
"B2"+(3,33);"B2"+(-3,33) **\dir{-};
(3,3)*{0}; (3,9)*{2};(3,13.5)*{3};(3,16.5)*{3};(3,21)*{2};
(4.2,28.5)*{1};
(-3,3)*{1}; (-3,9)*{2};(-3,13.5)*{3};
(-9,3)*{0};
(-13.5,4.5)*{0};(-19.5,4.5)*{1};
(15,16.5)*{=};
\endxy}
\newcommand{\deltacol}
{\xy
(0,0)*{}="B1";(6,0)*{}="B2";
(0,24)*{}="T1";(6,24)*{}="T2";
"T1"; "B1" **\dir{-};"T2"; "B2" **\dir{-};"T1"; "T2" **\dir{-};"B2"; "B1" **\dir{-};
"B1"+(0,6);"B2"+(0,6) **\dir{-};
"B1"+(0,12);"B2"+(0,12) **\dir{-};
"B1"+(0,15);"B2"+(0,15) **\dir{-};
"B1"+(0,18);"B2"+(0,18) **\dir{-};
"B2";"B2"+(1.5,1.5) **\dir{-};
"B2"+(1.5,1.5);"B2"+(1.5,7.5) **\dir{-};
"B2"+(0,6);"B2"+(3,9) **\dir{-};
"B2"+(0,12);"B2"+(3,15) **\dir{-};
"B2"+(0,15);"B2"+(3,18) **\dir{-};
"B2"+(0,18);"B2"+(3,21) **\dir{-};
"B2"+(0,24);"B2"+(3,27) **\dir{-};
"B2"+(3,9);"B2"+(3,33) **\dir{-};
"B2"+(1.5,31.5);"B2"+(3,33) **\dir{-};
"B2"+(1.5,31.5);"B2"+(1.5,25.5) **\dir{-};
"B2"+(-4.5,25.5);"B2"+(1.5,25.5) **\dir{-};
"B2"+(-4.5,25.5);"B2"+(-4.5,31.5) **\dir{-};
"B2"+(-4.5,31.5);"B2"+(1.5,31.5) **\dir{-};
"B2"+(-4.5,25.5);"B2"+(-6,24) **\dir{-};
"B2"+(-4.5,31.5);"B2"+(-3,33) **\dir{-};
"B2"+(3,33);"B2"+(-3,33) **\dir{-};
(3,3)*{0}; (3,9)*{2};(3,13.5)*{3};(3,16.5)*{3};(3,21)*{2};
(4.2,28.5)*{1};
(15,16)*{ = };
\endxy}
\newcommand{\Deltacol}{
\begin{texdraw}
\drawdim em
\setunitscale 1.7
\move(0 0)
\bsegment
\move(0 4)\lvec(1 5)\lvec(1 4)
\move(0 3)\lvec(1 3)
\move(0 2.5)\lvec(1 2.5)
\move(0 2)\lvec(1 2)
\move(0 1)\lvec(1 1)
\move(0 0)\lvec(0 4)\lvec(1 4)\lvec(1 1)
\move(0 0)\lvec(1 1)
\htext(0.15 0.52){$0$}
\htext(0.65 4.12){$1$}
\htext(0.4 1.35){$2$}
\htext(0.4 2.1){$3$}\htext(0.4 2.6){$3$}
\htext(0.4 3.35){$2$}
\esegment
\end{texdraw}
}
\newcommand{\Done}
{ {\xy (0,14)*++{\deltacol} \endxy} \quad \Deltacol }
\newcommand{\deltacols}
{\xy
(0,0)*{}="B1";(6,0)*{}="B2";
(0,24)*{}="T1";(6,24)*{}="T2";
"T1"; "B1"+(0,6) **\dir{-};
"T2"; "B2"+(0,6) **\dir{-};
"T1"; "T2" **\dir{-};
"B2"+(3,3);"B2"+(3,9) **\dir{-};
"B2"+(1.5,1.5); "B1"+(1.5,1.5) **\dir{-};
"B1"+(1.5,1.5); "B1"+(1.5,6) **\dir{-};
"B1"+(0,6);"B2"+(0,6) **\dir{-};
"B1"+(0,12);"B2"+(0,12) **\dir{-};
"B1"+(0,15);"B2"+(0,15) **\dir{-};
"B1"+(0,18);"B2"+(0,18) **\dir{-};
"B2"+(3,3);"B2"+(1.5,1.5) **\dir{-};
"B2"+(1.5,1.5);"B2"+(1.5,7.5) **\dir{-};
"B2"+(0,6);"B2"+(3,9) **\dir{-};
"B2"+(0,12);"B2"+(3,15) **\dir{-};
"B2"+(0,15);"B2"+(3,18) **\dir{-};
"B2"+(0,18);"B2"+(3,21) **\dir{-};
"B2"+(3,9);"B2"+(3,27) **\dir{-};
"B2"+(0,24);"B2"+(3,27) **\dir{-};
"B1"+(0,24);"B1"+(0,30) **\dir{-};
"B1"+(1.5,31.5);"B1"+(0,30) **\dir{-};
"B2"+(1.5,31.5);"B2"+(0,30) **\dir{-};
"B2"+(0,30);"B1"+(0,30) **\dir{-};
"B2"+(0,24);"B2"+(0,30) **\dir{-};
"B2"+(1.5,25.5);"B2"+(1.5,31.5) **\dir{-};
"B2"+(3,27);"B2"+(1.5,27) **\dir{-};
"B1"+(1.5,31.5);"B2"+(1.5,31.5) **\dir{-};
(4.2,4.5)*{0}; (3,9)*{2};(3,13.5)*{3};(3,16.5)*{3};(3,21)*{2};(3,27)*{1};
(15,16)*{ = };
\endxy}
\newcommand{\Deltacols}{
\begin{texdraw}
\drawdim em
\setunitscale 1.7
\move(0 0)
\bsegment
\move(0 4)\lvec(0 5)\lvec(1 5)\lvec(0 4)
\move(0 3)\lvec(1 3)
\move(0 2.5)\lvec(1 2.5)
\move(0 2)\lvec(1 2)
\move(0 1)\lvec(1 1)
\move(0 1)\lvec(0 4)\lvec(1 4)\lvec(1 1)
\move(0 0)\lvec(1 1)\lvec(1 0)\lvec(0 0)
\htext(0.15 4.52){$1$}
\htext(0.65 0.12){$0$}
\htext(0.4 1.35){$2$}
\htext(0.4 2.1){$3$}\htext(0.4 2.6){$3$}
\htext(0.4 3.35){$2$}
\esegment
\end{texdraw}
}
\newcommand{\Dtwo}
{ {\xy (0,14)*++{\deltacols} \endxy} \quad \Deltacols}
\newcommand{\deltacolp}
{\xy
(0,0)*{}="B1";(6,0)*{}="B2";
"B1";"B2" **\dir{-};
"B1";"B1"+(0,24) **\dir{-};
"B2";"B2"+(0,24) **\dir{-};
"B1"+(0,3);"B2"+(0,3) **\dir{-};
"B1"+(0,9);"B2"+(0,9) **\dir{-};
"B1"+(0,15);"B2"+(0,15) **\dir{-};
"B1"+(0,21);"B2"+(0,21) **\dir{-};
"B1"+(0,24);"B2"+(0,24) **\dir{-};
"B2";"B2"+(3,3) **\dir{-};
"B2"+(0,3);"B2"+(3,6) **\dir{-};
"B2"+(1.5,10.5);"B2"+(1.5,16.5) **\dir{-};
"B2"+(0,9);"B2"+(3,12) **\dir{-};
"B2"+(0,15);"B2"+(3,18) **\dir{-};
"B2"+(0,21);"B2"+(3,24) **\dir{-};
"B2"+(0,24);"B2"+(3,27) **\dir{-};
"B2"+(3,3);"B2"+(3,27) **\dir{-};
"B1"+(3,27);"B2"+(3,27) **\dir{-};
"B1"+(3,27);"B1"+(0,24) **\dir{-};
(3,1.5)*{3};
(3,6)*{2};
(3,12)*{0};
(3,18)*{2};
(3,22.5)*{3};
(15,16)*{ = };
\endxy}
\newcommand{\Deltacolp}{
\begin{texdraw}
\drawdim em
\setunitscale 1.7
\move(0 0)
\bsegment
\move(0 4)\lvec(0 4.5)\lvec(1 4.5)\lvec(1 4)
\move(0 3)\lvec(1 3)
\move(0 2)\lvec(1 3)
\move(0 2)\lvec(1 2)
\move(0 1)\lvec(1 1)
\move(0 1)\lvec(0 4)\lvec(1 4)\lvec(1 1)
\move(0 0.5)\lvec(0 0.5)
\move(0 1)\lvec(1 1)
\move(0 0.5)\lvec(1 0.5)
\move(0 0.5)\lvec(0 1)
\move(1 0.5)\lvec(1 1)
\htext(0.15 2.52){$0$}
\htext(0.65 2.12){$1$}
\htext(0.4 1.35){$2$}
\htext(0.4 0.6){$3$}\htext(0.4 4.1){$3$}
\htext(0.4 3.35){$2$}
\esegment
\end{texdraw}
}
\newcommand{\Dthree}
{ {\xy (0,14)*++{\deltacolp} \endxy} \quad \Deltacolp}
\newcommand{\DthreetwoEx}{
\begin{texdraw}
\drawdim em
\setunitscale 1.7
\move(1 1)\dtrj
\move(2 1)\dtrj
\move(3 1)\dtrj
\move(4 0)\lvec(4 2.5)\lvec(3 2.5)\lvec(3 0)
\move(3 0)\lvec(3 2)\lvec(2 2)\lvec(2 0)
\move(2 0)\lvec(2 1)\lvec(1 1)
\move(1 0)\lvec(1 1)
\move(1 0)\lvec(4 0)
\move(4 0.5)\lvec(1 0.5)
\move(4 1)\lvec(1 1)
\move(4 2)\lvec(2 2)
\htext(3.4 2.1){$2$}
\htext(2.4 1.35){$1$}\htext(3.4 1.35){$1$}
\htext(1.4 0.1){$0$}\htext(2.4 0.1){$0$}\htext(3.4 0.1){$0$}
\htext(1.4 0.6){$0$}\htext(2.4 0.6){$0$}\htext(3.4 0.6){$0$}
\end{texdraw}
}
\begin{document}

\title[The Andrews-Olsson identity and Bessenrodt insertion algorithm on Young walls]
{The Andrews-Olsson identity and Bessenrodt insertion algorithm on Young walls}

\author[Se-jin Oh]{Se-jin Oh$^{1}$}

\address{Department of Mathematical Sciences, Seoul National University Gwanak-ro 1, Gwanak-gu, Seoul 151-747, Korea}
         \email{sejin092@gmail.com}

\thanks{$^{1}$This work was supported by BK21 PLUS SNU Mathematical Sciences Division}

\subjclass[2000]{05A17, 05A19, 81R50, 17B37, 16T30} \keywords{crystal basis, Bessenrodt's algorithm, generating
function, Andrews-Olsson identity, partition, Young walls}

\begin{abstract}
We extend the Andrews-Olsson identity to two-colored
partitions. Regarding the sets of proper Young walls of quantum affine algebras
$\g_n=A^{(2)}_{2n}$, $A^{(2)}_{2n-1}$, $B^{(1)}_{n}$, $D^{(1)}_{n}$ and
$D^{(2)}_{n+1}$ as the sets of two-colored partitions, the extended
Andrews-Olsson identity implies that the generating functions of the
sets of reduced Young walls have very simple formulae:
\begin{center}
$\prod^{\infty}_{i=1}(1+t^i)^{\kappa_i}$
 where $\kappa_i=0$, $1$ or $2$, and $\kappa_i$ varies periodically.
\end{center}
Moreover, we generalize the Bessenrodt's algorithms to prove the
extended Andrews-Olsson identity in an alternative way. From these
algorithms, we can give crystal structures on certain subsets of
pair of strict partitions which are isomorphic to the crystal bases
$B(\Lambda)$ of the level $1$ highest weight modules $V(\Lambda)$
over $U_q(\g_n)$.
\end{abstract}

\maketitle

\section*{Introduction}
A weakly decreasing sequence of nonnegative integers
$\lambda=(\lambda_1,\lambda_2,\ldots)$ is called a {\it partition}
of $m$, denoted by $\lambda \vdash m$, if $m = \sum_i \lambda_i$. A
partition $\lambda$ is called a {\it strict partition} if all
parts are strictly decreasing and an {\it odd partition} if
all parts are odd. Let $\OP[m]$ (respectively, $\OS[m]$
and $\mathscr{O}[m]$) be the set of all (respectively, strict and odd)
partitions of $m$. Denote by $\OP$ (respectively, $\OS$ and
${\mathscr O}$)
 the set of all (respectively, strict and odd) partitions.

\medskip

\noindent \textbf{Theorem} (Euler's partition theorem) \  For all $m
\in \Z_{\ge 0}$,
\begin{equation} \label{eq: Euler id}
|\OS[m]| =|{\mathscr O}[m]|.
\end{equation}

\medskip

Euler proved the identity \eqref{eq: Euler id} by showing the equality of the
corresponding generating functions:
$$ \sum_{m=0}^\infty |\mathscr{P}[m]|t^m=\prod_{i=1}^{\infty}(1+t^i)=  \prod_{i=1}^{\infty} \dfrac{1}{(1-t^{2i-1})}=\sum_{m=0}^\infty |\mathscr{O}[m]|t^m.$$
In 1882, Sylvester showed the identity in an alternative way. He
proved the identity by constructing a combinatorial algorithm which
establishes a bijection between $\mathscr{O}[m]$ and $\OS[m]$.

The generalization of Euler's partition theorem is still one of the main topics in combinatorics
\cite{B94,KY99,La04,PP98,SY07}.
Thus, for subsets $\mathcal{A}$ and $\mathcal{B}$ of partitions,
$$ \text{ showing $|\mathcal{A}[m]|=|\mathcal{B}[m]|$
and constructing a combinatorial bijection between $\mathcal{A}[m]$
and $\mathcal{B}[m]$}$$ are interesting problems. In 1991
\cite{AO91}, Andrews and Olsson defined the series of subsets of
partitions, $\mathcal{AO}_1^{X_N}$ and $\mathcal{AO}_2^{X_N}$ ($N
\in \N$) (see Definition \ref{def: AOXN1} and \ref{def: AOXN2}), and showed that
$$|\mathcal{AO}_1^{X_N}[m]| = |\mathcal{AO}_2^{X_N}[m]| \text{ for all } m \in \Z_{\ge 0}.$$
Shortly after, Bessenrodt constructed a combinatorial insertion
algorithm which gives a bijection between them.
Several generalizations of Andrews-Olsson identity have been developed in \cite{B91,B95,WHKM10}.

On the other hand, the notion of partitions has been generalized such as overpartitions, multi-colored partitions.
Using the generalized notions, many mathematicians interpreted or proved combinatorial identities arising from
hypergeometric series \cite{CL,CLY,T}.

The characters of integrable modules over quantum groups $U_q(\mathsf{g})$
are important algebraic invariants which {\it determine} the
isomorphism classes in the sense that $M \cong N$ if and only if
${\rm ch}M = {\rm ch}N$. In \cite{Kash90, Kash91}, Kashiwara
developed the {\it crystal basis theory} for integrable
$U_q(\mathsf{g})$-modules from which many combinatorial properties of an
integrable module can be deduced.
By using realization
of crystal bases, one can compute the characters of integrable
modules (see Section \ref{Sec: Quantum affine}).

In \cite{Ha90}, Hayashi gave a $U_q(A^{(1)}_n)$-module structure on the space of {\it Young diagrams},
which can be understood as the set of all partitions with coloring.
In \cite{MM90}, Misra and Miwa showed that
the {\it reduced} Young diagrams provide a realization of the crystal basis of the level $1$ highest weight modules (see Section \ref{Sec: Quantum affine} for
definitions).
In \cite{K03}, Kang introduced the notion of \emph{Young walls} (which can be understood as a generalization of
Young diagrams) as new combinatorial scheme for realizing the crystal bases of the level $1$ highest weight
modules over all classical quantum affine algebras $\g$.
In that paper, it was shown that the set $\mathtt{Z}(\Lambda)$
of \emph{proper} Young walls has a crystal structure. Moreover, he proved that
the crystal $B(\Lambda)$ of the level 1 highest weight module $V(\Lambda)$ can be realized by the set
of \emph{reduced} Young walls $\mathtt{Y}(\Lambda)$. In \cite{KK04,KK08}, Kang and Kwon gave the $U_q(\g)$-module structure
on the space of proper Young walls and proved the decomposition formulae for the space into level 1 highest weight
modules (see \cite{HK02, KK04} for more details).

In this paper, we extend the Andrews-Olsson identity and
Bessenrodt's insertion algorithm to {\it two-colored partitions} by
regarding Young walls as two-colored partitions.
For each $\g$ and a level $1$ highest weight $\Lambda$, we define two subsets of two-colored partitions,
denoted by $\mathcal{AO}_1(\Lambda)$ and $\mathcal{AO}_2(\Lambda)$, such that
\begin{itemize}
\item $\mathcal{AO}_1(\Lambda)$ can be identified with the sets of reduced Young walls,
\item $\mathcal{AO}_2(\Lambda)$ can be identified with a certain pair of subsets of
strict partitions (see Section \ref{subsec: var par}).
\end{itemize}

We first show that the numbers of two-colored partitions of $m \in \Z_{\ge
0}$ in $\mathcal{AO}_1(\Lambda)$ and in $\mathcal{AO}_2(\Lambda)$
coincide with each other by developing a new combinatorial algorithm
(see Section \ref{Sec: Anderws-Olsson identity}). As a corollary, we
can compute the generating functions of sets
$\mathcal{AO}_i(\Lambda)$ ($i=1,2$); i.e., the formal power series
$g(t)$ in one indeterminate $t$ such that
\begin{equation} \label{eq: Yw generating}
g(t)= \sum_{m=0}^\infty |\mathcal{AO}_i(\Lambda)[m]|t^m
\end{equation}
where $\mathcal{AO}_i(\Lambda)[m]=\{ \lambda \in \mathcal{AO}_i(\Lambda) \ | \ \lambda \vdash m \}$.
The generating functions of the sets of
reduced Young walls \eqref{eq: Yw generating} are given as follows:
\begin{center}
\begin{tabular}{ | c | c | c  c | } \hline
Type & $\Lambda$ &  \multicolumn{2}{c|}{generating function}  \\ \hline
$A^{(2)}_{2n}$ & $\Lambda_0$
& $ \ \ \ \prod_{i=1}^{\infty} (1+t^i)^{\kappa_i}$,
& \fontsize{8}{8}\selectfont
$\kappa_i=0$ if $i \equiv 0 \ {\rm mod} \ 2n+1$, and $\kappa_i=1$ otherwise.
\fontsize{10}{10}\selectfont
\\ \hline
$A^{(2)}_{2n-1}$ & $\Lambda_0$, $\Lambda_1$
& \multicolumn{2}{c|}{ $\prod_{i=1}^{\infty} (1+t^i)$. }  \\ \hline
$B^{(1)}_{n}$ & $\Lambda_0$, $\Lambda_1$
& $ \ \ \  \prod^{\infty}_{i=1} (1+t^i)^{\kappa_i}$,
& \fontsize{8}{8}\selectfont
 $\kappa_i=2$ if $i \equiv 0 \ {\rm mod} \ 2n$, and $\kappa_i=1$ otherwise. $\quad$
 \fontsize{10}{10}\selectfont
 \\ \cline {2-4}
 & $\Lambda_n$
& $ \ \ \  \prod^{\infty}_{i=1} (1+t^i)^{\kappa_i}$,
& \fontsize{8}{8}\selectfont
$\kappa_i=2$ if $i \equiv n \ {\rm mod} \ 2n$, and $\kappa_i=1$ otherwise. $\quad$
\fontsize{10}{10}\selectfont
\\ \hline
$D^{(1)}_{n}$ & $\Lambda_0$, $\Lambda_1$, $\Lambda_{n-1}$, $\Lambda_{n}$
& $ \ \ \ \prod^{\infty}_{i=1} (1+t^i)^{\kappa_i}$,
& \fontsize{8}{8}\selectfont
 $\kappa_i=2$ if $i \equiv 0 \ {\rm mod} \ n-1$, and $\kappa_i=1$ otherwise. $ $
 \fontsize{10}{10}\selectfont
 \\ \hline
$D^{(2)}_{n+1}$& $\Lambda_{0}$, $\Lambda_{n}$
& \multicolumn{2}{c|}{ $\prod_{i=1}^{\infty} (1+t^i)$. }  \\ \hline
\end{tabular}
\end{center}
Note that these formulae can be interpreted as the {\it principally
specialized characters} which were studied in \cite{KKLW,LM}. Using a
vertex operator technique, Nakajima and Yamada also proved the
identities for types of $D^{(2)}_{n+1}$ and $A^{(2)}_{2n}$
\cite{NY94}. The bijection between $\mathcal{AO}_1(\Lambda)$ and
$\mathcal{AO}_2(\Lambda)$ is extended to a more general form. More precisely, for any set $X_{ \z_3}$ satisfying certain conditions,
we can define the subset $\mathcal{AO}^{X_{ \z_3}}_i(\Lambda)$ of $\mathcal{AO}_i(\Lambda)$ $(i=1,2)$ and prove that
$$ |\mathcal{AO}^{X_{ \z_3}}_1(\Lambda)[m]|=|\mathcal{AO}^{X_{ \z_3}}_2(\Lambda)[m]| \qquad \text{ for any } m \in \Z_{\ge 0}$$
(see Corollary \ref{Cor: more general AO id}).

Moreover, we construct an explicit bijection between
$\mathcal{AO}_1(\Lambda)$ and $\mathcal{AO}_2(\Lambda)$ by
generalizing Bessenrodt's insertion algorithm (see Section \ref{Sec:
Bessenrodt's refinement}). The restriction of this bijection to $\mathcal{AO}^{X_{ \z_3}}_i(\Lambda)$ yields an explicit
bijection between $\mathcal{AO}^{X_{ \z_3}}_1(\Lambda)$ and $\mathcal{AO}^{X_{ \z_3}}_2(\Lambda)$.
From the bijection, we can {\it assign} a crystal structure on $\mathcal{AO}_2(\Lambda)$ which also realizes the
crystal $B(\Lambda)$.

\vskip 1em
\noindent
{\bf Acknowledgements.} The author would like to thank Prof. Seok-Jin Kang and Prof. Jae-Hoon Kwon
for many valuable discussions and suggestions, and Ph.D. Hye Yeon Lee for helping with computer programming.
The author would also like to thank the anonymous reviewers for their valuable comments and suggestions.

\section{The quantum affine algebras} \label{Sec: Quantum affine}

Let $I=\{ 0,1,...,n \}$ be the index set. The \emph{affine Cartan
datum} $(A,P^{\vee},P,\Pi^{\vee},\Pi)$ consists of
\begin{enumerate}
\item[({\rm a})] a matrix $A$ of corank $1$, called the {\it affine Cartan matrix} satisfying
$$ ({\rm i}) \ a_{ii}=2 \ (i \in I), \quad ({\rm ii}) \ a_{ij} \in \Z_{\le 0}, \quad  ({\rm iii}) \ a_{ij}=0 \text{ if } a_{ji}=0$$
with $D= {\rm diag}(\mathsf{d}_i \in \Z_{>0}  \mid i \in I)$ making $DA$ symmetric,
\item[({\rm b})] a free abelian group $P^{\vee}=\bigoplus_{i=0}^{n} \Z h_i \oplus \Z d $, the \emph{dual weight lattice},
\item[({\rm c})] a free abelian group $P=\bigoplus_{i=0}^{n} \Z \Lambda_i \oplus \Z \delta \subset \h^*=\Q \otimes_\Z P^{\vee} $, the \emph{weight lattice},
\item[({\rm d})] an independent set $\Pi^{\vee} = \{ h_i \mid i\in I\} \subset P^{\vee}$, the set of {\it simple coroots},
\item[({\rm e})] an independent set $\Pi = \{ \alpha_i \mid i\in I \} \subset P$, the set of {\it simple roots},
\end{enumerate}
which satisfy
\begin{eqnarray}&&\parbox{100ex}{
\begin{itemize}
\item $\langle h_i, \alpha_j \rangle  = a_{ij}$ for all $i,j\in I$,
\item for each $i \in I$, there exists $\Lambda_i \in P$ such that $\langle h_j, \Lambda_i \rangle =\delta_{ij}$ for all $j \in I$.
\end{itemize} }\label{eq: fundament weight} \end{eqnarray}

We denote by $P^{+} \seteq \{\Lambda \in P \mid  \langle h_i,\Lambda \rangle \in \Z_{\ge 0},\  i \in I \}$ the set of \emph{dominant integral weights}. The free abelian group $Q\seteq\sum_{i \in I} \Z\alpha_i$ is called the \emph{root lattice} and we denote by $Q^{+}\seteq\bigoplus_{i \in I}\Z_{\ge 0}\alpha_i$.
For $\alpha=\sum_{i \in I} k_i \alpha_i \in Q^+$, we define the \emph{height} of $\alpha$ to be $\het(\alpha)\seteq\sum_{i \in I} k_i$.

Let $q$ be an indeterminate. For $i\in I$ and $m,n \in \Z_{\ge 0}$, define
$$ q_i=q^{s_i}, \ \
[n]_{q_i} =\frac{ {q_i}^n - {q_i}^{-n} }{ {q_i} - {q_i}^{-1} }, \ \
[n]_{q_i}! = \prod^{n}_{k=1} [k]_{q_i} , \ \
\left[\begin{matrix}m \\ n\\ \end{matrix} \right]_{q_i}=  \frac{ [m]_{q_i}! }{[m-n]_{q_i}! [n]_{q_i}! }.
$$

\begin{definition} The {\em quantum affine algebra} $U_q(\g_n)$ with an affine Cartan datum $(A,P^{\vee},P,\Pi^{\vee},\Pi)$
is the associative algebra over $\Q(q)$ with ${\bf 1}$ generated by $e_i,f_i$ $(i \in I)$ and
$q^{h}$ $(h \in P^{\vee})$ satisfying the following relations:

\begin{enumerate}

\item  $q^0=1, q^{h} q^{h'}=q^{h+h'} $ for $ h,h' \in P^{\vee},$

\item  $q^{h}e_i q^{-h}= q^{ \langle h,\alpha_i \rangle} e_i,
          \ q^{h}f_i q^{-h} = q^{- \langle h,\alpha_i \rangle }f_i$ for $h \in P^{\vee}, i \in I$,

\item  $e_if_j - f_je_i = \delta_{ij} \dfrac{K_i -K^{-1}_i}{q_i- q^{-1}_i }, \ \ \mbox{ where } K_i=q_i^{ h_i},$

\item  $\displaystyle \sum^{1-a_{ij}}_{k=0}(-1)^k \left[\begin{matrix}1-a_{ij} \\ k\\ \end{matrix} \right]_{q_i}
e^{1-a_{ij}-k}_i e_j e^{k}_i = \displaystyle \sum^{1-a_{ij}}_{k=0} (-1)^k \left[\begin{matrix}1-a_{ij} \\ k\\
\end{matrix} \right]_{q_i} f^{1-a_{ij}-k}_if_j f^{k}_i=0 \quad \text{ if }  i \ne j. $

\end{enumerate}

\end{definition}

A $U_q(\g_n)$-module $V$ is called a \emph{weight module} if it admits a \emph{weight space decomposition}
$$V= \bigoplus_{\mu \in P} V_{\mu}$$ where $V_{\mu}=\{ v \in V | \ q^hv=q^{\langle h, \mu \rangle } v \text{ for all }
 h \in P^{\vee} \}.$ If $\dim_{\Q(q)} V_{\mu} < \infty \text{ for all } \mu \in P$,
we define the \emph{character} of $V$ by
$$\chi_{\g_n}(V)=\sum_{\mu \in P} (\dim_{\Q(q)} V_{\mu} )e(\mu).$$
Here $\chi_{\g_n}(V)$ is a formal sum and $e(\mu)$ is a basis element of the group algebra $\Z[P]$ with the multiplication given by $e(\mu)e(\nu)=e(\mu+\nu) \text{ for all } \mu,\nu \in P$.

A weight module $V$ over $U_q(\g_n)$ is {\it integrable} if all $e_i$ and $f_i$ ($i \in I$) are locally nilpotent on $V$.

\begin{definition}
The category $\mathcal{O}_{{\rm int}}$ consists of integrable $U_q(\g_n)$-modules $V$ satisfying the following conditions:
\begin{enumerate}
\item $V$ admits a weight space decomposition $V=\bigoplus_{\mu \in P} V_{\mu}$ and each weight space of $V$ is finite dimensional.
\item There exists a finite number of elements $\lambda_1,\ldots,\lambda_s \in P$ such that
$$ {\rm wt}(V) \subset D(\lambda_1) \cup \cdots \cup D(\lambda_s).$$
Here ${\rm wt}(V): =\{ \mu \in P \ | \ V_\mu \ne 0 \}$ and $D(\lambda):= \{ \lambda - \sum_{i \in I} k_i\alpha_i \ | \ k_i \in \Z_{\ge 0} \}$.
\end{enumerate}
\end{definition}

Then it is proved in \cite[Chapter 3]{HK02}, \cite{Lus93} that the
category $\mathcal{O}_{{\rm int}}$ is semisimple with its irreducible
objects being isomorphic to {\it the highest modules $V(\Lambda)$} for some highest weight $\Lambda \in P^+$. Here $V(\Lambda)$ is defined as follows:
\begin{itemize}
\item it is generated by a unique highest weight vector $v_{\Lambda}$ of highest weight $\Lambda$,
\item $e_i$ and $f_i^{\langle  h_i ,\Lambda \rangle+1}$ act trivially on $v_{\Lambda}$ for all $i \in I$,
\item it admits a weight space decomposition,
$V(\Lambda)=\bigoplus_{\mu \in P}V(\Lambda)_\mu,$
where ${\rm wt}(V(\Lambda)) \subset D(\Lambda)$.
\end{itemize}

Thus the character of $V(\Lambda)$ can be written in the following
form:
\begin{equation} \label{eq: ch VLambda}
\chi_{\g_n}(V(\Lambda))=\sum_{\mu \in {\rm wt}(V(\Lambda)) } (\dim_{\Q(q)} V(\Lambda)_{\mu} )e(\Lambda) e(-\alpha_0)^{\mu_0} \cdots e(-\alpha_n)^{\mu_n},
\end{equation}
where $\Lambda-\mu =\sum_{i \in I}\mu_i \alpha_i$ for some $\mu_i \in \Z_{\ge 0}$. For $V(\Lambda)$ $(\Lambda \in P^+)$, we set
$\chi_{\g_n}^{\Lambda} = \chi_{\g_n}(V(\Lambda))$.

\begin{definition} We define the \emph{principally specialized character} of $V(\Lambda)$ as follows:
$$\chi_{\g_n}^{\Lambda}(t) = \chi_{\g_n}^{\Lambda}|_{\substack{e(\Lambda)=1 \\ e(-\alpha_i)=t} }, \text{ for all } i \in I.$$
Here $t$ is an indeterminate. Considering \eqref{eq: ch VLambda}, $\chi_{\g_n}^{\Lambda}(t)$ is written as
$$\chi^\Lambda_{\g_n}=\sum_{\mu \in {\rm wt}(V(\Lambda)) } (\dim_{\Q(q)} V_{\mu} )t^{\sum_{i \in I}\mu_i}
=\sum_{m \in \Z_{\ge 0}} \left( \sum_{\substack{\mu \in {\rm wt}(V(\Lambda)),\\ \het(\Lambda-\mu)=m} } (\dim_{\Q(q)} V_{\mu} ) \right)t^m.$$
\end{definition}

The \emph{level} of $\Lambda \in P^{+}$ is defined to be the nonnegative integer
$\langle \mathsf{c}, \Lambda \rangle$, where $\mathsf{c}$ is the \emph{center} of $\g_n$ defined as follows (\cite[Chapter 4]{Kac}):
$$ \langle \mathsf{c},\alpha_i \rangle=0 \qquad \text{ for all } i \in I.$$
Then the level $1$ dominant integral weights of $\g_n=A^{(2)}_{2n}$, $A^{(2)}_{2n-1}$, $B^{(1)}_{n}$, $D^{(1)}_{n}$ and
$D^{(2)}_{n+1}$ are given as follows (see \eqref{eq: fundament weight} for the definition of $\Lambda_i$):
\begin{equation} \label{eqa:level 1}
\begin{tabular}{ | c | c | c | c | c | c | } \hline
$\g_n$ & $A^{(2)}_{2n}$ & $A^{(2)}_{2n-1}$ & $B^{(1)}_{n}$ & $D^{(1)}_{n}$ & $D^{(2)}_{n+1}$ \\ \hline
$\Lambda$ & $\Lambda_0$  & $\Lambda_0,\Lambda_1$ & $\Lambda_0,\Lambda_1,\Lambda_n$  & $\Lambda_0,\Lambda_1,\Lambda_{n-1},\Lambda_n$ & $\Lambda_0,\Lambda_n$ \\ \hline
\end{tabular}
\end{equation}

Let $\delta=d_0 \alpha_0 + \cdots + d_n \alpha_n$ be the \emph{null root} of $\g_n$ (\cite[Chapter 4]{Kac}); i.e.,
$$ \langle h_i,\delta \rangle=0 \qquad \text{ for all } i \in I.$$
Set $a_i=d_i$ if $\g_n \neq D^{(2)}_{n+1}$, $a_i=2d_i$ if $\g_n = D^{(2)}_{n+1}$.
Set $\Delta = \sum_{i \in I} a_i$.
Then $\Delta$ becomes an integer depending on $\g_n$ as follows:
\begin{equation} \label{eqa:Delta}
\begin{tabular}{ | c | c | c | c | c | c | } \hline
Type & $A^{(2)}_{2n}$ & $A^{(2)}_{2n-1}$ & $B^{(1)}_{n}$ & $D^{(1)}_{n}$ & $D^{(2)}_{n+1}$ \\ \hline
$\Delta$ & $2n+1$  & $2n-1$ & $2n$  & $2n-2$ & $2n+2$ \\ \hline
\end{tabular}
\end{equation}
Let $\A_0=\{f/g \in \Q(q) \mid f,g \in \Q[q],g(0) \neq 0 \}$ and $M$ be a weight $U_q(\g_n)$-module.

\begin{definition} A {\it crystal basis} of $M$ consists of a pair $(L,B)$ with {\it the Kashiwara operators}
$\tilde{e}_i$ and $\tilde{f}_i$ ($i \in I$) as follows:
\begin{enumerate}
\item $L= \bigoplus_\mu L_\mu$ is a free $\A_0$-submodule of $M$ such that
$$ M \simeq \Q(q) \otimes_{\A_0} L \quad\text{ and }\quad L_\mu = L \cap M_\mu,$$
\item $B= \bigoplus_\mu B_\mu$ is a basis of the $\Q$-vector space $L/qL$, where $B_\mu=B \cap (L_\mu/qL_\mu)$,
\item $\tilde{e}_i$ and $\tilde{f}_i$ ($i \in I$) are defined on $L$; i.e., $\tilde{e}_iL, \tilde{f}_iL \subset L$,
\item the induced maps $\tilde{e}_i$ and $\tilde{f}_i$ on $L/qL$ satisfy
$$ \tilde{e}_iB, \tilde{f}_iB \subset B \sqcup \{0\}, \quad \text{ and } \quad  \tilde{f}_ib=b' \quad \text{ if and only if } \quad b=\tilde{e}_ib' \quad  \text{ for } b,b' \in B.$$
\end{enumerate}

\end{definition}
The set $B$ has a colored oriented graph structure as follows:
$$ b \overset{i}{\longrightarrow} b' \quad\text{ if and only if }\quad \tilde{f}_ib=b'.$$
The graph structure encodes the structure information of $M$. For example,
\begin{itemize}
\item $|B_\mu| =\dim_{\Q(q)}M_\mu$ for all $\mu \in {\rm wt}(M)$.
\item $B$ is connected if and only if $M$ is irreducible.
\end{itemize}

It is shown in \cite{Kash91} that $V(\Lambda)$
has a unique \emph{crystal basis} $(L(\Lambda),B(\Lambda))$.
Thus $\chi^\Lambda_{\g_n}$ and $\chi_{\g_n}^{\Lambda}(t)$ can be expressed as follows:
\begin{align} \label{eqn:ch in crystal}
 \chi^{\Lambda}_{\g_n}= \sum_{\mu \in {\rm wt}(V(\Lambda))} |B(\Lambda)_\mu|e(\mu), \ \
\chi_{\g_n}^{\Lambda}(t)= \sum_{m \in \Z_{\ge 0}}
\left(\sum_{ \substack{ \mu \in {\rm wt}(V(\Lambda)), \\ \ \het(\Lambda -\mu)=m} } |B(\Lambda)_\mu|\right)t^m.
\end{align}
(See \cite{HK02, Kash93} for more details on the crystal basis theory.)

\section{Young walls and various partitions} \label{Sec: Young wall Partition}

\subsection{Young walls}
In \cite{K03}, Kang gave realizations of level $1$ highest weight crystals $B(\Lambda)$ for all classical
quantum affine algebras in terms of \emph{reduced Young walls} (see \cite{HKL04} as well).
From now on, we assume that $\g_n$ is of type $A^{(2)}_{2n}$, $A^{(2)}_{2n-1}$, $B^{(1)}_n$, $D^{(1)}_n$ or
$D^{(2)}_{n+1}$. Basically, Young walls are built from colored blocks.
There are three types of blocks whose shapes are different and which appear depending on type as follows:

\begin{center}
\begin{tabular}{|c c c c |c|} \hline
Shape & Width & Thickness & Height & Type \\ \hline
\ublock  & 1 & 1 & 1 &  all types \\
\hhblock & 1 & 1 & 1/2 & $A^{(2)}_{2n},B^{(1)}_{n},D^{(2)}_{n+1}$ \\
\htblock & 1 & 1/2 & 1 & $A^{(2)}_{2n-1},B^{(1)}_{n},D^{(1)}_{n}$\\ \hline
\end{tabular}
\end{center}

Given $\g_n$ and a dominant integral weight $\Lambda$ of level 1, we fix a frame $Y_\Lambda$ called the
\emph{ground-state Young wall} of weight $\Lambda$.
The set of Young walls is the set of blocks built on the ground-state Young wall by the following
rules:
\begin{enumerate}
\item[(a)] All blocks should be placed on the top of the ground-state Young wall or another block.
\item[(b)] The colored blocks should be stacked in the given pattern depending on $\g_n$ and $\Lambda$.
\item[(c)] No block can be placed on the top of a column of half-thickness.
\item[(d)] Except for the right-most column, there should be no free space to the right of any blocks.
\end{enumerate}

The patterns of Young walls are given as follows:
\begin{align*}
& \begin{tabular}{|c|c|c|c|} \hline $D^{(2)}_{n+1}$ & $A^{(2)}_{2n}$
&  $A^{(2)}_{2n-1}$ & $B^{(1)}_{n}$\\ \hline
 &  &  & \\
 $ \ \FPatDnpO \ \ \SPatDnpO \ $ & $ \ \PatAtnt \ $ &
 $ \ \FPatAtnmo \ \ \SPatAtnmo \ $  & $\ \TPatBnO \ $\\
 $\quad$ $\Lambda_0$ $\qquad\qquad$ $\Lambda_n$
 & $\quad$ $\Lambda_0$
 & $\quad$ $\Lambda_0$ $\qquad\qquad$ $\Lambda_1$
 & $\quad$ $\Lambda_n$ \\ \hline
\end{tabular}
\allowdisplaybreaks \\
& \begin{tabular}{|c|c|} \hline $B^{(1)}_{n}$ & $D^{(1)}_{n}$   \\
\hline
 &   \\
$\ \FPatBnO \ \ \SPatBnO \ $ & $\quad \FPatDnO \ \ \ \SPatDnO \ \ \ \TPatDnO \ \ \ \FoPatDnO \ \ $ \\
$\quad$ $\Lambda_0$ $\qquad\qquad$ $\Lambda_1$ &
 $\Lambda_0$ $\quad\qquad\qquad$ $\Lambda_1$
$\qquad\qquad\qquad$ $\Lambda_{n-1}$ $\qquad\qquad\qquad$ $\Lambda_n$\\ \hline
\end{tabular}
\end{align*}
\label{table:Dynkin}
Here the shaded part denotes the ground-state Young wall $Y_\Lambda$. The ground-state Young walls are described in the following way:
\begin{align*}
\begin{tabular}[c]{|c|c|c|c|} \hline
$\gyzz$ & $\Gyzz$&
$\gynn$  &  $\Gynn$ \\ \hline
$\gyoz$ & $\Gyzo$ &
$\gyzo$ &  $\Gyoz$ \\ \hline
$\gynnmo$ & $\Gynnmo$ &
$\gynmon$ &  $\Gynmon$ \\ \hline
\end{tabular}
\end{align*}

We write a Young wall $Y=(y_k)^{\infty}_{k=1}$ as an infinite sequence of its columns
where the columns are enumerated from right to left.

\begin{example} For $\g=B_3^{(1)}$ and $\Lambda=\Lambda_0$, we use the colored block
$$ \htbox{0} \quad \ \   \htbox{1} \quad \ \  \ubox{2} \quad \ \  \hhbox{3}$$
The following object is a Young wall.
$$ {\xy (0,14)*++{\youngwall} \endxy} \quad \Bthreeoneex$$
\end{example}

A column in a Young wall is called a \emph{full column} if its height is a
multiple of unit length and its top is of unit thickness.
We say that a Young wall is \emph{proper} if none
of the full columns have the same height.

\begin{example} \label{ex: forall}
 For $\g=B_3^{(1)}$ and $\Lambda=\Lambda_0$, let us consider the following four Young walls:
$$\Bthreeoneextwo$$
Then one can check that $Y^1$ is not proper while the others are proper.
\end{example}

The {\it part} of a column
consisting of $a_i$-many $i$-blocks, for each $i \in I$, in some
cyclic order is called a \emph{$\delta$-column}.

\begin{example} \label{ex: delta}
 For $\g=B_3^{(1)}$ and $\Lambda=\Lambda_0$, the following are $\delta$-columns.
$$ \Done \qquad \Dtwo \qquad \Dthree$$
\end{example}

\begin{definition} \
\begin{enumerate}
\item A column in a proper Young wall is said to contain a
{\em removable $\delta$} if we may remove a $\delta$-column
from $Y$ and still obtain a proper Young wall.
\item A proper Young wall is said to be {\em reduced}
 if none of its columns contain a removable $\delta$.
\end{enumerate}
\end{definition}

In Example \ref{ex: forall}, $Y^2$ is not a reduced Young wall, while $Y^3$ and $Y^4$ are reduced proper Young walls.

\medskip

For a given Young wall $Y$, we define the {\it weight} $\wt(Y)$ of $Y$ as follows:
\begin{align} \label{eqn:weight on partitions}
\wt(Y)= \Lambda-\sum_{i \in I} m_i \alpha_i.
\end{align}
Here $m_i$ is the number of $i$-blocks on the ground-state Young wall $Y_{\Lambda}$.

Let $\mathtt{Z}(\Lambda)$ be the set of all proper Young walls and $\mathtt{Y}(\Lambda)$ be the set of all reduced proper Young walls.

\begin{theorem} \cite{K03}
\begin{enumerate}
\item $\mathtt{Z}(\Lambda)$ has a crystal structure induced by Kashiwara operators $\tilde{e}_i$ and $\tilde{f}_i$.
\item The set $\mathtt{Y}(\Lambda)$ is closed under Kashiwara operators $\tilde{e}_i$ and $\tilde{f}_i$.
Moreover, there is a crystal isomorphism between
$\mathtt{Y}(\Lambda)$ and $B(\Lambda)$.
\end{enumerate}
\end{theorem}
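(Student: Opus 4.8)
The plan is to realize Young walls columnwise as elements of a semi-infinite tensor power of a fixed finite crystal and to deduce both statements from the tensor product rule for crystals together with the path (perfect-crystal) realization of $B(\Lambda)$ (see \cite{HK02}). Concretely, to each of the five families I attach a finite crystal $\mathcal C$ --- the underlying set of the relevant level $1$ perfect crystal of $\g_n$ --- and send a proper Young wall $Y=(y_k)_{k\ge1}$ (columns read from right to left) to the sequence $(c_k)_{k\ge1}$, where $c_k\in\mathcal C$ records the column $y_k$. First I would verify, type by type from the stacking patterns and ground states $Y_\Lambda$ displayed above, that $Y\mapsto(c_k)_{k\ge1}$ is a bijection from $\mathtt Z(\Lambda)$ onto the set of sequences in $\mathcal C$ that stabilize, for $k\gg0$, to the ground-state sequence attached to $Y_\Lambda$ and for which every consecutive pair $(c_{k+1},c_k)$ satisfies the compatibility (\emph{admissibility}) condition of the path model; rules (a)--(d) are precisely what makes this correspondence work.

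\textbf{Part (1).} Through this dictionary $\mathtt Z(\Lambda)$ becomes a subset of the semi-infinite tensor power $\cdots\otimes\mathcal C\otimes\mathcal C$, on which the Kashiwara operators are given by the usual signature rule: reading from right to left, record $\varphi_i(c_k)$ symbols $+$ and then $\varepsilon_i(c_k)$ symbols $-$ for each column, cancel every adjacent $-+$, and let $\tilde f_i$ (resp.\ $\tilde e_i$) modify the column carrying the leftmost uncancelled $-$ (resp.\ rightmost uncancelled $+$); stabilization of the tail makes this well defined. I would then check that this coincides with the combinatorial rule \emph{add (resp.\ remove) one colored $i$-block in the selected column}, which requires showing (i) that $\mathtt Z(\Lambda)$ is stable under the operation --- the result is again a proper Young wall or $0$, a closure argument resting on the fact that only one column is altered --- and (ii) that $\varphi_i(c_k)$ and $\varepsilon_i(c_k)$ count exactly the addable and removable $i$-blocks of the column $y_k$. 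Since a tensor product of crystals is again a crystal, $\mathtt Z(\Lambda)$ then inherits a crystal structure, which is (1).

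\textbf{Part (2).} Deleting a $\delta$-column corresponds under the dictionary to lowering the energy of the associated path, so reduced proper Young walls correspond precisely to the minimal-energy paths, and in the path realization these form a single connected component isomorphic to $B(\Lambda)$; this gives $\mathtt Y(\Lambda)\simeq B(\Lambda)$. Closure of $\mathtt Y(\Lambda)$ under $\tilde e_i,\tilde f_i$ is then automatic, but one can also see it directly: these operators change the height of exactly one column, and a short case analysis shows this cannot produce a removable $\delta$ in a reduced wall. For a self-contained proof of the isomorphism I would instead check that $Y_\Lambda$ has weight $\Lambda$ and is the unique $\tilde e_i$-annihilated element, that $\mathtt Y(\Lambda)$ is connected (any reduced wall is reached from $Y_\Lambda$ by repeatedly adding the lowest addable block), and that the resulting crystal is normal, and then invoke the characterization of $B(\Lambda)$ as the unique connected normal crystal with these properties --- or simply compare $|\mathtt Y(\Lambda)_\mu|$ with $\dim_{\Q(q)} V(\Lambda)_\mu$ using the known level $1$ characters.

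\textbf{Main obstacle.} The decisive point is the compatibility claimed in Part (1): that the pictorial \emph{$i$-signature, cancel, add or remove one block} procedure on a Young wall is literally the tensor-product Kashiwara operator on the corresponding path --- equivalently, that the string of $+$'s and $-$'s read off a single column equals the string recording $\varphi_i$ followed by $\varepsilon_i$ of the matching perfect-crystal element, and that sign cancellation respects the right-to-left order of columns. This has to be carried out separately for $A^{(2)}_{2n}$, $A^{(2)}_{2n-1}$, $B^{(1)}_n$, $D^{(1)}_n$ and $D^{(2)}_{n+1}$ by inspecting the explicit patterns and ground states, and this is where essentially all the technical content of \cite{K03} resides.
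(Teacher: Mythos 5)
This theorem is quoted from \cite{K03}: the present paper gives no proof of it at all, so the only thing to compare your outline against is Kang's original argument, whose architecture (level-$1$ perfect crystals, a columnwise signature/tensor-product rule, and the path realization of $B(\Lambda)$) your plan does follow.

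There is, however, a concrete inconsistency in your ``column dictionary'' that propagates into Part (2). If $\mathcal C$ is the \emph{finite} level-$1$ perfect crystal, then the set of sequences in $\mathcal C$ stabilizing to the ground-state sequence is (contained in) the path space $\mathcal P(\Lambda)\cong B(\Lambda)$, so a bijection from $\mathtt Z(\Lambda)$ onto that set cannot exist: by Proposition \ref{Prop:Fock decomposition}, $\mathtt Z(\Lambda)$ decomposes as $\bigoplus_{k}B(\Lambda-\epsilon k\delta)^{\oplus|\OP(k)|}$ and is strictly larger than $B(\Lambda)$. The point being missed is that a column of a proper Young wall carries more data than its image in $\mathcal C$: inserting a $\delta$-column changes the height but not the top configuration, so two distinct proper Young walls map to the same sequence in $\mathcal C$. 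For Part (1) you must therefore encode columns in the affinized crystal $\{z^n b \mid b\in\mathcal C,\ n\in\Z\}$ (equivalently, record exact heights); since $\varepsilon_i,\varphi_i$ are unchanged under affinization, the semi-infinite tensor-product signature rule still gives $\mathtt Z(\Lambda)$ a crystal structure, and this is how Part (1) is actually proved. It is only $\mathtt Y(\Lambda)$ that bijects onto $\mathcal P(\Lambda)$: the ``no removable $\delta$'' condition pins down the number of $\delta$'s in each column recursively, in exact agreement with the energy-function recursion defining the path realization. Correspondingly, your phrase ``reduced walls correspond to the minimal-energy paths'' is off target --- every element of $\mathcal P(\Lambda)$ already lies in $B(\Lambda)$, and the energy function serves to recover heights and weights, not to cut down the path space; the reduced/non-reduced dichotomy lives upstairs in $\mathtt Z(\Lambda)$. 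With these corrections your plan does reproduce the proof in \cite{K03}; the remaining work is, as you say, the type-by-type check that the block-adding rule on a column matches $\varepsilon_i,\varphi_i$ of the corresponding perfect-crystal element.
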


\begin{definition} \label{Def:character of set}
For $m \in \Z{\ge 0}$, $\mu \in P$ and a subset $A$ of $\mathtt{Z}(\Lambda)$, we define
\begin{enumerate}
\item $A[m]$ to be the subset of $A$ which has $m$ blocks on the ground-state Young wall $Y_{\Lambda}$,
\item $A[\mu]$ to be the subset of $A$ consisting of Young walls with their
      weight $\mu$,
\item the {\it virtual character} $\vch_{\g_n}(A)$ of $A$ to be
$$ \vch_{\g_n}(A)= \sum_{\mu \in P} |A[\mu]|e(\mu).$$
\end{enumerate}
\end{definition}

Then the equations in $\eqref{eqn:ch in crystal}$ tell that
\begin{equation}  \label{eqn:ch in YW}
\chi^{\Lambda}_{\g_n} =\vch_{\g_n}(\mathtt{Y}(\Lambda))  =\sum_{\mu \in P} |\mathtt{Y}(\Lambda)[\mu]|e(\mu), \ \
\chi_{\g_n}^{\Lambda}(t)= \sum_{m}|\mathtt{Y}(\Lambda)[m]| t^m.
\end{equation}

Set
$$ \epsilon = \begin{cases} 2 & \text{ if $\g_n$ is of type $D^{(2)}_{n+1}$}, \\
1 & \text{ otherwise.} \end{cases} $$

\begin{proposition} \label{Prop:Fock decomposition} \cite[Corollary 2.5]{KK04} \
$$\mathtt{Z}(\Lambda) = \bigoplus_{k \in \Z_{\ge 0}} B(\Lambda-\epsilon k  \delta)^{\oplus |\OP(k)|},$$
where $B(\Lambda-m\delta)$ is the crystal of the highest weight module $V(\Lambda-m\delta)$ for $m \in \Z_{>0}$.
\end{proposition}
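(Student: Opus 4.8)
The plan is to recognize $\mathtt{Z}(\Lambda)$ as the crystal graph of a module lying in $\mathcal{O}_{\mathrm{int}}$ and then to decompose it by locating its highest weight vectors. Concretely, by the $U_q(\g_n)$-module structure on the space of proper Young walls constructed in \cite{KK04}, the set $\mathtt{Z}(\Lambda)$ is the crystal graph of the crystal basis of a Fock-type module $\mathcal{F}(\Lambda)$ which lies in $\mathcal{O}_{\mathrm{int}}$. Since every object of $\mathcal{O}_{\mathrm{int}}$ is a direct sum of highest weight modules $V(\mu)$ with $\mu \in P^{+}$, and crystal bases are compatible with direct sums, $\mathtt{Z}(\Lambda)$ splits as a disjoint union of connected components, each isomorphic to $B(\mu)$ for some dominant weight $\mu$, with the component $B(\mu)$ occurring with multiplicity equal to the number of highest weight vectors of $\mathtt{Z}(\Lambda)$ of weight $\mu$. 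Thus it suffices to enumerate the highest weight vectors, that is, the proper Young walls $Y$ with $\tilde{e}_i Y = 0$ for all $i \in I$.

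The second step is the combinatorial classification of these highest weight vectors. I would show that a proper Young wall $Y$ satisfies $\tilde{e}_i Y = 0$ for all $i$ if and only if $Y$ is obtained from the ground-state Young wall $Y_\Lambda$ by stacking, on the $j$-th column from the right, exactly $\lambda_j$ consecutive $\delta$-columns, where $\lambda=(\lambda_1 \ge \lambda_2 \ge \cdots)$ is a partition: the weakly decreasing condition is forced both by properness (full columns cannot repeat heights) and by the requirement that no $\tilde{e}_i$ lowers $Y$, which prevents a $\delta$-column from sitting above a strictly shorter neighbour. By the theorem of \cite{K03}, the reduced Young walls $\mathtt{Y}(\Lambda)$ realize $B(\Lambda)$ with $Y_\Lambda$ as their unique highest weight vector, which is the $\lambda=\emptyset$ case of this description. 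For the weight bookkeeping, recall that $\delta = \sum_{i} d_i \alpha_i$ and $a_i = \epsilon d_i$, so a single $\delta$-column carries $\sum_{i} a_i \alpha_i = \epsilon \delta$; hence $Y$ as above has $\wt(Y) = \Lambda - \epsilon\bigl(\sum_j \lambda_j\bigr)\delta = \Lambda - \epsilon|\lambda|\delta$.

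Combining the two steps, the highest weight vectors of $\mathtt{Z}(\Lambda)$ of weight $\Lambda - \epsilon k\delta$ are in bijection with partitions of $k$, so there are exactly $|\OP(k)|$ of them, and no other weights arise as highest weights. Because $\langle h_i, \delta\rangle = 0$ for all $i$, each $\Lambda - \epsilon k\delta$ is again a dominant integral weight, so $V(\Lambda - \epsilon k\delta)$ and its crystal $B(\Lambda - \epsilon k\delta)$ are well defined; and the connected component of each such highest weight vector is the \emph{full} $B(\Lambda - \epsilon k\delta)$ precisely because $\mathtt{Z}(\Lambda)$ comes from a module in $\mathcal{O}_{\mathrm{int}}$. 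Summing over $k$ yields $\mathtt{Z}(\Lambda) = \bigoplus_{k \ge 0} B(\Lambda - \epsilon k\delta)^{\oplus |\OP(k)|}$. As a cross-check one may instead compare virtual characters, matching $\vch_{\g_n}(\mathtt{Z}(\Lambda))$ against $\sum_{k \ge 0} |\OP(k)|\,\chi^{\Lambda - \epsilon k\delta}_{\g_n}$ using the product formula for the Fock-space character.

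I expect the main obstacle to be the combinatorial classification in the second paragraph: showing that the vanishing of \emph{all} the $\tilde{e}_i$ is equivalent to the weakly decreasing $\delta$-stacking, and that adjoining or deleting a $\delta$-column is compatible with the Kashiwara operators, must be checked against the explicit stacking patterns and ground-state frames, which differ across the five affine types and the various level-$1$ weights; a fully uniform argument is not evident. The remaining ingredients --- the semisimplicity of $\mathcal{O}_{\mathrm{int}}$, the behaviour of crystal bases under direct sums, and the weight computation --- are formal.
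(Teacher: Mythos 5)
The paper does not prove this proposition at all: it is imported verbatim as a citation of \cite[Corollary 2.5]{KK04}, so there is no internal argument to compare yours against. What you have written is essentially a reconstruction of the proof given in the cited source: realize $\mathtt{Z}(\Lambda)$ as the crystal of a Fock-space module lying in $\mathcal{O}_{\mathrm{int}}$, invoke semisimplicity of $\mathcal{O}_{\mathrm{int}}$ so that connected components of the crystal biject with highest weight summands, classify the maximal vectors as the walls obtained from $Y_\Lambda$ by stacking $\lambda_j$ many $\delta$-columns on the $j$-th column for a partition $\lambda$, and compute $\wt = \Lambda - \epsilon|\lambda|\delta$ using $a_i = \epsilon d_i$. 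That is the right architecture, and your identification of the hard step (the combinatorial classification of the vectors killed by all $\tilde{e}_i$, to be checked against each stacking pattern) is accurate. One detail in your second paragraph is backwards, though: properness does not \emph{force} the weakly decreasing condition --- it forbids \emph{full} columns of equal height, which, if the stacked columns were full, would force strict decrease and would give multiplicity equal to the number of strict partitions rather than $|\OP(k)|$. The weak decrease $\lambda_j \ge \lambda_{j+1}$ comes from the basic building rule that no block may have free space to its right, and equal parts are permitted precisely because the tops of these columns are half-height or half-thickness blocks, so the columns are not full and properness imposes nothing on them. This is consistent with, and in fact required by, the multiplicity $|\OP(k)|$ in the statement.
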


In terms of Young walls, Proposition \ref{Prop:Fock decomposition} can be interpreted as follows:
\begin{equation} \label{eqn:Fock decomposition}
|\mathtt{Z}(\Lambda)[m]| =  \displaystyle \sum_{ k \ge 0,  \ m- k \Delta  \ge
0} (|\mathtt{Y}(\Lambda)[m-k \Delta] | \times |\OP(k)|).
\end{equation}

\subsection{Various partitions} \label{subsec: var par}
Set $\N \seteq \Z_{> 0}$. We denote by
\begin{itemize}
\item $\overline{\N}$ the set of positive integers which are overlined,
\item $\mathsf{N}=\N \sqcup \overline{\N} \sqcup \{ 0 \}$.
\end{itemize}

We assign a linear order $\succ$ on $\mathsf{N}$ by defining
$$  \cdots \succ x \succ \overline{x} \succ x-1 \succ \overline{x-1} \succ \cdots \succ 1 \succ \overline{1} \succ 0.$$

To distinguish overlined integers and normal integers, we define a map
$c: \mathsf{N} \setminus \{ 0 \} \to \Z_2$ by
$$c(\overline{x})=1 \quad \text{and} \quad  c(x)=0 \quad \text{ for  all }  x \in \N.$$

\begin{definition} \label{Def: 2-partition} With the linear order $\succ$ on $\mathsf{N}$, we can define the notion
of the set of partitions $\OP^2$ which consists of sequences in $\mathsf{N}$; i.e.,
$$ \lambda = (\lambda_1,\lambda_2,\ldots) \in \OP^2 \ \ \text{ if and only if } \ \ \lambda_i \in \mathsf{N} \text{ and }
 \lambda_i \succeq \lambda_{i+1} \text{ for all } i \in \Z_{\ge 1}.$$
An element $\lambda \in \OP^2$ is called a {\it two-colored partition}.
\end{definition}

For a two-colored partition $\lambda = (k^{m_k},
\overline{k}^{m_{\overline{k}}} \cdots,
2^{m_2},\overline{2}^{m_{\overline{2}}},
1^{m_1},\overline{1}^{m_{\overline{1}}}) \in \OP^2$, we say that
$\lambda$ is a two-colored partition of $m \in \Z_{\ge 0}$, if
$$\sum^{k}_{i=1} i(m_i+m_{\overline{i}})=m, \text{ and write }
\Sigma_{\lambda}=m \text{ or } \lambda \vdash m.$$ For each subset $\mathscr{A}$ of $\OP^2$ and
$m \in \Z_{\ge 0}$, we denote $\mathscr{A}[m] \seteq \{ \lambda
\in \mathscr{A} \ | \ \Sigma_{\lambda}=m \}$.

For $a \succ b \in \mathsf{N}$ such that $\overline{a} \neq b$, we define $a-b$ to be the element in $\mathsf{N}$ whose value is given by the ordinary subtraction
of their values, and $c(a-b) \equiv c(a)-c(b) \ {\rm mod} \ 2$. In particular, if
$\overline{a} = b$, we define $a-b=0$. Similarly, we can define $a+b$, for $a,b \in \mathsf{N}$.
\begin{example}
$\overline{3}-2=\overline{1}$, $\overline{3}+2=\overline{5}$,
$\overline{2}-2=0$ and $\overline{2}+0=\overline{2}$.
\end{example}
We also define $\Z_{\ge 0}$-multiplication $\cdot: \Z_{\ge 0} \times \mathsf{N} \to \mathsf{N} $ by
$$(k,x) \longmapsto k\cdot x \quad \text{with} \quad
c(x)=c(k\cdot x).$$ For instance, $2 \cdot \overline{2} = \overline{4}$.

For sequences  $\lambda=(\lambda_1,\ldots,\lambda_t)$ and
$\mu=(\mu_1,\ldots,\mu_l)$ of $\mathsf{N}$, we define
\begin{itemize}
\item[(i)] $\ell(\lambda)=t$, the {\it length} of $\lambda$,
\item[(ii)] $\mu * \lambda=(\mu_1,\ldots,\mu_l,\lambda_1,
\ldots,\lambda_t)$, the \emph{concatenation} of $\mu$ and $\lambda$,
\item[(iii)] $\re \lambda=(\lambda_t,\lambda_{t-1},\ldots,\lambda_1)$, the
\emph{reverse} of $\lambda$.
\end{itemize}

\begin{definition} \cite{CL}\hfill
\begin{enumerate}
\item An {\it overpartition} is a weakly decreasing sequence of $\N$ in which the first occurrence
of the number may be overlined. We denote by $\OP^o$ the set of all overpartitions.
\item We denote by $\OP^t$ the subset of $\OP^2$ consisting of $\lambda$'s satisfying the following condition: \\
For every $x \in \N$, both $x$ and $\overline{x}$ can not be parts of $\lambda$, simultaneously. More precisely,
\begin{itemize}
\item  if $x$ is a part of $\lambda$, then $\overline{x}$ is not a part of $\lambda$,
\item  if $\overline{x}$ is a part of $\lambda$, then $x$ is not a part of $\lambda$.
\end{itemize}
\end{enumerate}
\end{definition}

\begin{example}
\begin{enumerate}
\item The number of overpartitions of $3$ is $8$:
$$ (3), \ (\overline{3}), \ (2,1), \ (\overline{2},1), \ (2,\overline{1}), \ (\overline{2},\overline{1}),
\ (1,1,1), \ (\overline{1},1,1).$$
\item $(5,\overline{3},\overline{3},2,2,1) \in \OP^t[16]$, $(5,3,\overline{3},2,2,1) \not \in \OP^t[16]$.
\end{enumerate}
\end{example}

\begin{definition} For a given proper Young wall $Y=(y_k)^{\infty}_{k=1}$, we define several sequences
which are associated to $Y$ as follows:
\begin{enumerate}
\item $\mathsf{P}(Y)=(p(y_k))^{\infty}_{k=1}$ is the sequence in $\Z_{\ge 0}$, where
$p(y_k)$ is the number of blocks in $k$th column of Y on $Y_\Lambda$,
\item $\mathsf{P}^t(Y)=(p^t(y_k))^{\infty}_{k=1}$ is the sequence in $\mathsf{N}$, where
$p^t(y_k)$ is $\overline{p(y_k)}$ if the top of the $k$th column is a half-thickness block
and placed in the front side, and $p(y_k)$ otherwise.
\end{enumerate}
\end{definition}

In Example \ref{ex: forall}, we have
$$\mathsf{P}^t(Y^3)=(6,6,1)  \quad \text{ and } \quad   \mathsf{P}^t(Y^4)=(\overline{6},\overline{6},1).$$

For $m_1,m_2,m_3 \in \Z_{\ge 0}$, we define
\begin{itemize}
\item $\mathsf{N}\{(m_1,m_2)\}\seteq \N \cup (m_1 \cdot \overline{\N}+m_2) \cup
\{ \overline{m_1}, \  0 \}$ and
\item $\OS^t\{(m_1,m_2);m_3\}$ to be
the subset of $\OP^t$ consisting of $\lambda=(\lambda_1,\lambda_2,\ldots)$'s satisfying the following:
\begin{itemize}
\item[(i)] $\lambda_i \in \mathsf{N}\{(m_1,m_2)\}$ for all $i \in \Z_{\ge 1}$,
\item[(ii)] $\lambda_i=\lambda_{i+1}\succ 0$ if and only if $\lambda_i=k \cdot m_1+m_2$, $k\cdot \overline{m_1}+m_2$, $m_1$,
$\overline{m_1}$ or $k \cdot m_3$ for some $k \in \Z_{\ge 0}$.
\end{itemize}
\end{itemize}

For each type of $\g_n$ and $\Lambda$, we define
$\mathsf{Z}=(\z_1,\z_2,\z_3)$, $\U$ and $\V$:
\begin{equation} \label{Table: W^1 W^2}
\begin{tabular}{|c | c | c | c | c |} \hline
Type & $\Lambda$ & $\mathsf{Z}=(\z_1,\z_2,\z_3)$ & $\U$& $\V$   \\ \hline
$A^{(2)}_{2n}$ & $\Lambda_0$ & $(0,0,2n+1)$ & $2n+1$ & $2n+1$\\
$A^{(2)}_{2n-1}$ &$\Lambda_0,\Lambda_1$ & $(2n-1,0,2n-1)$ & $2n-1$ & $2n-1$\\
$B^{(1)}_{n}$ &$\Lambda_0,\Lambda_1$ & $(2n,0,n)$ & $2n$ & $2n$\\
$B^{(1)}_{n}$ &$\Lambda_n$ & $(n,2n,n)$ & $2n$ & $2n$\\
$D^{(1)}_{n}$ &$\Lambda_0,\Lambda_1,\Lambda_{n-1},\Lambda_n$ & $(n-1,0,n-1)$ &
$2n-2$ & $n-1$ \\
$D^{(2)}_{n+1}$ & $\Lambda_0,\Lambda_n$ & $(0,0,n+1)$ & $2n+2$ & $n+1$\\  \hline
\end{tabular}
\end{equation}

For each given $\mathsf{Z}$, $\U$ and $\V$, we define the subsets $\mathcal{Z}(\Lambda)$ and $\mathcal{AO}_i(\Lambda)$
$(i=1,2)$ of $\OP^t$ as follows:
\begin{enumerate}
\item $\mathcal{Z}(\Lambda) \seteq \OS^t\{(\z_1,\z_2);\z_3\}$.
\item Let $\mathcal{AO}_1(\Lambda)$ be the subset of $\mathcal{Z}(\Lambda)$ satisfying \\
$\bullet$ the difference between successive parts is at most $\U$
and the smallest part is strictly less than $\U$ with respect to the linear order $\succ$, \\
$\bullet$ the difference between successive parts is strictly less than $\U$ with respect to the linear order $\succ$, if either part is congruent to $\z_3,
\overline{ \z_3}$ or $0$ modulo $\V$.
\item Let $\mathcal{AO}_2(\Lambda)$ be the subset of $\mathcal{Z}(\Lambda)$ as follows:
\begin{equation} \label{Table: AO_2}
\begin{tabular}{|c | c | } \hline
Type &
$\mathcal{AO}_2(\Lambda)$   \\ \hline
$A^{(2)}_{2n-1}$
& Each part is not a multiple of $\U$ and no part can be repeated.\\ \hline
$A^{(2)}_{2n}$, $B^{(1)}_{n} \ (\Lambda=\Lambda_n)$, $D^{(1)}_{n}$
& Each part is not a multiple of $\U$.
\\ \hline
$B^{(1)}_{n} \ (\Lambda \neq \Lambda_n)$ & Each part is not a multiple of $\U$ and \\
& only the parts congruent to $\z_1$ modulo $\U$ can be repeated.\\ \hline
$D^{(2)}_{n+1}$
& No part can be repeated. \\ \hline
\end{tabular}
\end{equation}
\end{enumerate}

\begin{remark} \label{Rmk: AO2}\hfill
\begin{enumerate}
\item The map $\mathsf{P}^t : \mathtt{Z}(\Lambda) \to \OP^2$ is injective and we have
$$ \mathrm{Im}(\mathtt{Z}(\Lambda))=\mathcal{Z}(\Lambda) \quad \text{ and } \quad \mathrm{Im}(\mathtt{Y}(\Lambda))=\mathcal{AO}_1(\Lambda).$$
\item The map $\mathsf{P} : \mathtt{Z}(\Lambda) \to \OP$ is injective for types $D^{(2)}_{n+1}$ and $A^{(2)}_{2n}$.
But in general, $\mathsf{P}$ is not injective. In Example \ref{ex: forall},
$\mathsf{P}(Y^3)=\mathsf{P}(Y^4)=(6,6,1)$ even though $Y^3 \ne Y^4$.
\item For type $D^{(2)}_{n+1}$, we have
$  \mathsf{N}\{(\z_1,\z_2)\}=\mathsf{N}\{(0,0)\}  = \Z_{\ge 0}$. Thus the condition given in \eqref{Table: AO_2} implies that
$\mathcal{AO}_2(\Lambda)$ is indeed the set of all strict partitions, denoted by $\OS$.
\item For type $A^{(2)}_{2n-1}$, we have $\mathsf{N}\{(\z_1,\z_2)\} = \mathsf{N}\{(2n-1,0)\} = \Z_{\ge 0} \sqcup 2n-1 \cdot \overline{\N}$.
Thus the condition given in \eqref{Table: AO_2} implies that
$\mathcal{AO}_2(\Lambda)$ can be identified with the set of all strict partitions $\OS$.
\item For type $A^{(2)}_{2n}$, we have
$  \mathsf{N}\{(\z_1,\z_2)\}=\mathsf{N}\{(0,0)\}  = \Z_{\ge 0}$. Thus the condition given in \eqref{Table: AO_2} implies that
we can identify $\mathcal{AO}_2(\Lambda)$ with the subset of strict partitions
$\OS_{(\U)}$ which is defined as follows:
\begin{align} \label{Def: S_{U}}
\lambda=(\lambda_1,\lambda_2,\ldots) \in \OS_{(\U)} \text{ if and only if } \lambda \text{ is strict and } \U  \not| \ \lambda_i \text{ for all
 $i \in \N$}.
\end{align}
\end{enumerate}
\end{remark}

\section{ The Andrews-Olsson identity } \label{Sec: Anderws-Olsson identity}

In this section, we prove the following theorem.
{\it Hereafter, we drop $(\Lambda)$ of notations given in Section
\ref{Sec: Young wall Partition}}.

\begin{theorem} \label{Thm: Andrews-Olsson identity}
For all $m \in \N$, we have
$$ |\mathcal{AO}_1[m]| = |\mathcal{AO}_2[m]|.$$
\end{theorem}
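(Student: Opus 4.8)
The plan is to prove Theorem~\ref{Thm: Andrews-Olsson identity} by producing, uniformly in the type-dependent data $\mathsf{Z}=(\z_1,\z_2,\z_3)$, $\U$ and $\V$ of \eqref{Table: W^1 W^2}, a size-preserving bijection $\mathcal{AO}_2[m]\to\mathcal{AO}_1[m]$ by a recursive insertion algorithm in the spirit of Andrews--Olsson and Bessenrodt. The key simplification is that membership in $\mathcal{AO}_1$ and in $\mathcal{AO}_2$ is controlled entirely by the residue of each part modulo $\U$ and modulo $\V$ --- and one checks $\V\mid\U$ in every line of \eqref{Table: W^1 W^2}, with quotient $1$ or $2$ --- together with the coloring recorded by the map $c$. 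Hence a single algorithm with these parameters substituted in should handle all five affine types at once, and the branching will occur only in the verification.

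First I would describe the forward map $\Psi\colon\mathcal{AO}_2\to\OP^t$. Given $\lambda=(\lambda_1\succeq\lambda_2\succeq\cdots)\in\mathcal{AO}_2$, process the parts from largest to smallest, carrying along a partition $\nu\in\mathcal{AO}_1$ (initially empty). To insert a part $a$: if adjoining $a$ as a new largest part of $\nu$ keeps all successive differences $\preceq\U$, and $\prec\U$ whenever one of the two parts involved is congruent to $\z_3$, $\overline{\z_3}$ or $0$ modulo $\V$, then do so; otherwise peel off from $a$ a block of size $\U$ --- or of size $\V$ when the strictness clause is active --- record $a$ minus that block, and continue downward through $\nu$ with the residual blocks, inserting them in decreasing order until every gap condition is restored. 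Since $c(a-b)\equiv c(a)-c(b)\ \mathrm{mod}\ 2$, the coloring of each block split off is forced. One then verifies that $\Psi(\lambda)$ has the same size as $\lambda$, has all parts in $\mathsf{N}\{(\z_1,\z_2)\}$ with exactly the repetitions permitted by $\OS^t\{(\z_1,\z_2);\z_3\}$, has smallest part $\prec\U$, and satisfies both gap conditions; that is, $\Psi$ maps $\mathcal{AO}_2[m]$ into $\mathcal{AO}_1[m]$.

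Next I would build the inverse $\Phi\colon\mathcal{AO}_1\to\mathcal{AO}_2$ by reassembly. Given $\nu\in\mathcal{AO}_1$, one reads off a canonical decomposition of its multiset of parts into maximal descending runs --- the cuts being made exactly at the gaps that are slack (strictly smaller than the permitted bound), so that within a run every gap is tight --- and then collapses each run by adding its peeled blocks back onto the smallest part of the run. The point is that the gap conditions defining $\mathcal{AO}_1$ should make this run decomposition unambiguous, so that collapsing is the precise reverse of the peeling in $\Psi$; one checks that the collapsed parts are never multiples of $\U$, repeat only as prescribed by Table~\eqref{Table: AO_2}, and that $\Phi\circ\Psi$ and $\Psi\circ\Phi$ are the identity by tracking the run structure through both procedures. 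If one only wants the equinumerosity and not the map, the same insertion step, read as an induction on $m$ (peel off a single $\U$- or $\V$-block from the largest part and recurse), already gives it; the clean explicit bijection via Bessenrodt's algorithm is postponed to Section~\ref{Sec: Bessenrodt's refinement}.

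I expect the verification, rather than the construction, to be the main obstacle. The strictness clause near parts congruent to $\z_3,\overline{\z_3},0$ modulo $\V$ forces the peeling step to remove $\V$ instead of $\U$ at certain moments, and one must show this is compatible with (i) the repetition rules --- in particular the asymmetric case $B^{(1)}_{n}$ with $\Lambda\neq\Lambda_n$, where only parts $\equiv\z_1$ modulo $\U$ may repeat, and the cases $D^{(1)}_{n}$ and $D^{(2)}_{n+1}$ where $\V<\U$ --- and with (ii) the overline assignments dictated by $c$, so that a block of the wrong color is never produced and the recursion never gets stuck and always terminates. Pinning down the correct notion of ``run'' for $\Phi$ so that it is genuinely inverse to $\Psi$ is part of the same issue. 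This finite but delicate case analysis, spread over the five types and the overlining, is where essentially all the real content of the proof sits; the bijective skeleton itself is a routine adaptation of the classical arguments.
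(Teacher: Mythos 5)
Your plan is a direct insertion bijection $\mathcal{AO}_2[m]\to\mathcal{AO}_1[m]$, and as written it has a genuine gap. The construction you sketch (peel $\U$- or $\V$-blocks off a part and redistribute them down the partition until the gap conditions are restored; invert via a run decomposition) is essentially the generalized Bessenrodt algorithm, which the paper only carries out later, in Section~\ref{Sec: Bessenrodt's refinement}, and which requires considerably more machinery than you indicate: one must pass through the auxiliary encodings $\mathsf{P}^o$ and $\mathsf{P}^s$, extract a partition by an $\mathsf{L}$-hook removal procedure, transpose it, and reinsert --- and the well-definedness of each step is exactly the ``finite but delicate case analysis'' you defer. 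Deferring it means the theorem is not proved. Your fallback claim --- that peeling a single $\U$- or $\V$-block from the largest part and recursing ``already gives'' equinumerosity by induction on $m$ --- is not a valid argument: removing such a block from an element of $\mathcal{AO}_i[m]$ need not land in $\mathcal{AO}_i[m-\U]$, and you exhibit neither a bijection nor a counting identity that would close such an induction.

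The paper's actual proof is far more economical and constructs no map between $\mathcal{AO}_1$ and $\mathcal{AO}_2$ at all. Both sets sit inside the common ambient set $\mathcal{Z}$, and the two propositions preceding the proof produce, for each $i=1,2$, a bijection from the complement $\mathcal{AO}_i^c[m]$ inside $\mathcal{Z}[m]$ onto $\bigsqcup_{k>0,\ m-k\U\ge 0}\bigl(\mathcal{AO}_i[m-k\U]\times\OP[k]\bigr)$: algorithm $(\mathbf{A})$ subtracts multiples of $\U$ from initial segments of parts, algorithm $(\mathbf{B})$ deletes parts that are multiples of $\U$. Consequently $|\mathcal{AO}_1^c[m]|$ and $|\mathcal{AO}_2^c[m]|$ are given by the \emph{same} expression in the cardinalities $|\mathcal{AO}_i[l]|$ with $l<m$; after the base cases $0\le t\le\U$ are checked by hand, induction on $m$ yields $|\mathcal{AO}_1^c[m]|=|\mathcal{AO}_2^c[m]|$ and hence $|\mathcal{AO}_1[m]|=|\mathcal{AO}_2[m]|$, since both are complements in the same finite set $\mathcal{Z}[m]$. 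This sieve argument is the missing idea in your write-up; the explicit bijection you are aiming at is the content of a later, separate theorem, not of this one.
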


\noindent For $i=1,2$, we denote by $\mathcal{AO}^c_i$ the
complement of $\mathcal{AO}_i$ in $\mathcal{Z}$.

\begin{proposition}
For all $m \in \N$, there are bijections given as follows:
$$\overline{\Psi}[m]:\mathcal{AO}_1^{c}[m] \to \displaystyle
\bigsqcup_{k > 0, \ m-k  \U \ge 0} (\mathcal{AO}_1[m-k
 \U ]  \times \OP[k]).$$
\end{proposition}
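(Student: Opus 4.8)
The plan is to recursively "extract" a removable $\delta$ from a non-reduced proper Young wall and thereby peel off a partition. Fix $m\in\N$ and a two-colored partition $\lambda\in\mathcal{AO}_1^{c}[m]$; via the injection $\mathsf{P}^t$ (Remark \ref{Rmk: AO2}(1)) we may think of $\lambda$ as a proper Young wall $Y\in\mathtt{Z}[m]$ that is \emph{not} reduced, i.e.\ some column contains a removable $\delta$. The first step is to single out a canonical column from which to remove a $\delta$: I would take the \emph{rightmost} column of $Y$ whose removal of a $\delta$-column again yields a proper Young wall, remove one $\delta$ from it, and record how far to the left that column sits. Concretely, if the chosen column is the $k$-th column, this produces a proper Young wall $Y'\in\mathtt{Z}[m-\Delta]$ (removing a $\delta$ drops the block count by $\Delta=\z_3$ blocks, which under \eqref{eqa:Delta} equals $\U$ in every case of the table \eqref{Table: W^1 W^2} — this numerical coincidence $\Delta=\U$ is exactly what makes the statement have the shape it does). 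Iterating, each time a $\delta$ is stripped we read off a positive integer (the index of the column it was taken from, or a suitably normalized version of it) and after finitely many steps we reach a \emph{reduced} proper Young wall $Z\in\mathtt{Y}[m-k\U]$ for some $k>0$; the recorded integers, sorted into weakly decreasing order, form a partition $\pi\in\OP[k]$. This defines $\overline{\Psi}[m](\lambda)=(Z,\pi)$ after translating back through $\mathsf{P}^t$.

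The second step is to construct the inverse map. Given $(Z,\pi)\in\mathcal{AO}_1[m-k\U]\times\OP[k]$, I would read the parts of $\pi$ in some fixed order (say from largest to smallest, using a last-in-first-out discipline matching the extraction) and, for each part $j$, \emph{insert} a $\delta$-column into the $j$-th column of the current Young wall. One must check that each such insertion is legal — that it keeps the wall proper — and that it lands in a column from which the forward map would indeed have chosen to remove that $\delta$; this is where the precise "rightmost admissible column" rule in the forward direction has to be matched by a corresponding rule (leftmost-to-rightmost, or innermost-first) in the backward direction so that the two are mutually inverse. Running through all parts of $\pi$ produces a proper Young wall with $k$ extra $\delta$'s, hence $m$ blocks total, and it is non-reduced because it visibly contains a removable $\delta$; so it lies in $\mathcal{AO}_1^c[m]$.

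The third step is bookkeeping: verify $\Sigma$-compatibility (removing/adding a $\delta$ changes the block count by exactly $\Delta=\U$, so the total weight is preserved under the claimed decomposition $m\leftrightarrow (m-k\U)+k\U$), verify that the reduced wall $Z$ reached at the end is independent of choices (it is the unique reduced wall in the $\delta$-reduction class of $Y$ — essentially because removable $\delta$'s commute, a confluence/diamond-lemma argument), and verify that the multiset of recorded column-indices is itself independent of the order of extraction. Finally one checks the two composites $\overline{\Psi}[m]\circ\overline{\Psi}[m]^{-1}$ and $\overline{\Psi}[m]^{-1}\circ\overline{\Psi}[m]$ are the identity.

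The main obstacle I anticipate is \textbf{not} the counting but the combinatorial geometry of \emph{which} column one is allowed to remove a $\delta$ from, and making the forward selection rule and the backward insertion rule genuinely inverse. Removing a $\delta$ from column $k$ can destroy properness unless the neighboring columns $k-1$ and $k+1$ are positioned compatibly (recall properness constrains the heights of full columns and rule (d) forbids gaps to the right), so "the column contains a removable $\delta$" is a subtle local condition that depends on the shape of adjacent columns, and it varies with the type of $\g_n$ (the half-blocks of $A^{(2)}_{2n},B^{(1)}_n,D^{(2)}_{n+1}$ and the half-thickness columns of $A^{(2)}_{2n-1},B^{(1)}_n,D^{(1)}_n$ behave differently). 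I would handle this by first establishing a clean local criterion — in terms of $\mathsf{P}(Y)$ and $\mathsf{P}^t(Y)$ — for when column $k$ admits $\delta$-removal, proving a confluence statement for the $\delta$-removal rewriting system, and only then defining the bijection so that the forward map always acts on the rightmost admissible column; the $\delta$-column pictures in Example \ref{ex: delta} and the sample reductions in Example \ref{ex: forall} are the templates to check the criterion against in each type.
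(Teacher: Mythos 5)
Your overall strategy --- strip $\delta$'s from a non-reduced proper Young wall until it becomes reduced, record a partition along the way, and invert by an insertion procedure --- is the same idea the paper uses, but the data you propose to record has the wrong shape, and this is a genuine gap rather than a checkable detail. If the process removes $r$ $\delta$-columns in total, then exactly $r\U$ blocks are removed, so $k=r$ and the second component of $\overline{\Psi}[m](Y)$ must be a partition of $r$. Your $\pi$ is the sorted multiset of the \emph{column indices} from which the $\delta$'s were taken, which is a partition of the sum of those indices, not of $r$: removing a single $\delta$ from column $3$ contributes $3$ to $|\pi|$ but only $1$ to $k$. Worse, this multiset is not independent of the order of extraction (removing two $\delta$'s from column $5$ records $\{5,5\}$, while removing one from column $5$ and then one from the newly exposed column $4$ records $\{5,4\}$), so the confluence argument in your third step cannot repair it. The order-independent invariant is the \emph{number of $\delta$'s removed from each column}, $\lambda_j=(y_j-y'_j)/\U$, and that is precisely what the paper's algorithm $(\mathbf{A})$ records.

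The paper also sidesteps the ``which single column admits a removable $\delta$'' problem that you correctly identify as the main obstacle, by never removing a $\delta$ from one column in isolation. Its elementary move locates the maximal $i$ with $y_{i-1}-y_i\succeq t\cdot\U$ (such an oversized gap is exactly what membership in $\mathcal{AO}_1^{c}$ provides) and subtracts $t\cdot\U$ simultaneously from the entire prefix $y_1,\dots,y_{i-1}$. Because successive prefixes are nested, the resulting $\lambda$ is automatically weakly decreasing with $\sum_j\lambda_j=k$, and the inverse map is simply $(Y',\lambda')\mapsto(y'_1+\lambda'_1\cdot\U,\,y'_2+\lambda'_2\cdot\U,\dots)$; exhibiting this two-sided inverse makes any confluence or diamond lemma unnecessary. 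Your single-column move, by contrast, is only legal when the chosen column already has a gap of at least $\U$ to the next (shorter) column, so in practice each removal creates a new oversized gap one column further along and you are forced into a cascade that sweeps across a prefix anyway --- reproducing the paper's move with strictly worse bookkeeping. (A small additional slip: the block count of a $\delta$-column is $\Delta=\U$, not $\z_3$; for $B^{(1)}_n$ one has $\z_3=n$ while $\Delta=\U=2n$.)
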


\begin{proof}
In this proof, we give an explicit bijection between the above two sets.
Let $\Psi[m]$ be a map from $\mathcal{AO}_1^{c}[m] $ to
$\displaystyle \bigsqcup_{k > 0, \ m-k \U \ge 0} \mathcal{AO}_1[m-k
\U]$ given by the following algorithm $({\mathbf A})$:
\begin{itemize}
\item[(${\mathbf A}$1)] Let $Y =(y_1,y_2,\ldots) \in \mathcal{AO}_1^{c}[m]$ be given. Set $Y^{(0)}=Y$ and $l=0$.
\item[(${\mathbf A}$2)] Find the maximal $i$ such that
\begin{align} \label{condition A2}
y^{(l)}_{i-1}-y^{(l)}_i \succeq t \cdot \U  \text{ for some } \ t \in \Z_{> 0} \ \text{ and } \
(y^{(l)}_{i-1}-t \cdot \U ,y^{(l)}_i) \in \mathcal{Z}.
\end{align}
\item[(${\mathbf A}$3)] Among $t$'s satisfying the condition in $\eqref{condition A2}$, choose the maximal one and say $t_{l+1}$.
Set
$$Y^{(l+1)}\seteq(y^{(l)}_1-t_{l+1} \cdot \U ,y^{(l)}_2-t_{l+1} \cdot \U ,...,y^{(l)}_{i-1}-t_{l+1}\cdot \U ,y^{(l)}_i,y^{(l)}_{i+1},...).$$
\item[(${\mathbf A}$4)]
If $Y^{(l+1)} \in \mathcal{AO}_1$, then
define $Y'=Y^{(l+1)}$ and terminate this algorithm. Otherwise, set $l=l+1$ and go to $({\mathbf A}2)$.
\end{itemize}
This algorithm terminates in finitely many steps and we have
\begin{align*}
 k\seteq\dfrac{\Sigma_Y-\Sigma_{Y'}}{ \U } \in \N, \  Y' \in \mathcal{AO}_1[m-k  \U], \
 \lambda_i\seteq\dfrac{y_i-y'_i}{\U } \in \Z_{\ge 0}, \ \lambda\seteq(\lambda_1,\lambda_2,\ldots)
 \in \OP[k].
\end{align*}
Thus we obtain the map
$$\overline{\Psi}[m]: \ \mathcal{AO}_1^{c}[m] \to \displaystyle \bigsqcup_{k > 0, \ m-k  \U \ge 0}
(\mathcal{AO}_1[m-k \U]  \times  \OP[k] )$$ given by $Y \mapsto
(Y',\lambda)$. Then $\overline{\Psi}[m]$ becomes a bijection.
Moreover, the preimage $Y$ of $(Y',\lambda') \in
\mathcal{AO}_1[m-k\U]  \times \OP[k]$ under the map
$\overline{\Psi}[m]$ is given as follows:
$$Y\seteq(y'_1+ \lambda'_1 \cdot \U ,y'_2+ \lambda'_2 \cdot \U ,...,y'_k+ \lambda'_k \cdot \U ,..). $$
\end{proof}

\begin{definition}
For a given two-colored partition $Y=(y_1,y_2,\ldots) \in \mathcal{Z}(\Lambda)$
and $e,j \in \N$, we define the {\it left insertion} of
$(\underbrace{e,\ldots,e}_j)$ into $Y $ to be a two-colored partition in
$\mathcal{Z}(\Lambda)$,
denoted by $(e)^j \hookrightarrow Y$, as follows:
$$
(e)^j \hookrightarrow Y \seteq \begin{cases} (y_1,y_2,...,y_i,
\underbrace{\overline{e},...,\overline{e}}_{j}, y_{i+1} ,..) &
\text{ if } y_i = \overline{e}, \\
(y_1,y_2,...,y_i,\underbrace{e,...,e}_{j},y_{i+1} ,..) & \text{ otherwise },
\end{cases} \text{ for } y_i \succeq \overline{e}  \succ y_{i+1}. $$
\end{definition}

Recall that $\epsilon=2$ if $\g_n$ is of type $D^{(2)}_{n+1}$, and $\epsilon=1$ otherwise.

\begin{proposition} \label{Prop: Algorithm B}
For all $m \in \N$, there are bijections given as follows:
$$\overline{\Phi}[m]:\mathcal{AO}_2^{c}[m] \to
\displaystyle \bigsqcup_{k > 0, \ m-k  \U \ge 0}
(\mathcal{AO}_2[m-k  \U] \times \OP[k] ).$$
\end{proposition}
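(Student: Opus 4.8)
The plan is to prove this proposition by mirroring the proof of the preceding one: I would realize $\overline{\Phi}[m]$ as the output of an explicit algorithm $(\mathbf{B})$ defined on $\mathcal{AO}_2^{c}[m]$, and then establish that it is a bijection by writing down its inverse in terms of the left insertion operation introduced just above. So I would first define $(\mathbf{B})$ to take $Y=(y_1,y_2,\ldots)\in\mathcal{AO}_2^{c}[m]$ to a pair $(Y',\lambda)$, prove that $Y'\in\mathcal{AO}_2[m-k\U]$ and $\lambda\in\OP[k]$ with $k\U=\Sigma_Y-\Sigma_{Y'}$, and prove reversibility.

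The algorithm I have in mind runs as follows. Since $Y\notin\mathcal{AO}_2$, by \eqref{Table: AO_2} the sequence $Y$ must fail its type's condition: it contains a positive part whose value is a multiple of $\U$, or it contains a repetition that is permitted in $\mathcal{Z}$ but forbidden in $\mathcal{AO}_2$ (the latter being the only possibility for $D^{(2)}_{n+1}$ and, after the identifications of Remark \ref{Rmk: AO2}, $A^{(2)}_{2n-1}$, while for $B^{(1)}_n$ with $\Lambda\neq\Lambda_n$ both defects can arise). In each round $(\mathbf{B})$ locates the maximal offending part $y_i$ and removes from $Y$ a chunk of weight a positive multiple of $\U$ — by subtracting $\U$ from $y_i$ itself when $y_i$ is a multiple of $\U$, and by deleting the offending equal run when the defect is a repetition, which is legitimate because the values that may repeat in $\mathcal{Z}$ but not in $\mathcal{AO}_2$ are exactly those whose double is a multiple of $\U$. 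One then checks that the resulting sequence again lies in $\mathcal{Z}=\OS^t\{(\z_1,\z_2);\z_3\}$ — the colored subtraction on $\mathsf{N}$ must be carried out respecting the parity map $c$, and the equality clause defining $\OS^t\{(\z_1,\z_2);\z_3\}$ must be monitored — and one repeats until the output lies in $\mathcal{AO}_2$. Since $\Sigma$ drops by a positive multiple of $\U$ in each round, the algorithm halts; the decrement data, listed part by part and reversed if necessary, is weakly decreasing and hence defines a partition $\lambda\in\OP[k]$ with $k\U=\Sigma_Y-\Sigma_{Y'}$. For the inverse, given $(Y',\lambda')\in\mathcal{AO}_2[m-k\U]\times\OP[k]$, I would add $\lambda'_i\cdot\U$ back to the $i$-th part and restore the necessary equal parts via $(e)^j\hookrightarrow(-)$; since $Y'\in\mathcal{AO}_2$ the positions and multiplicities of the insertions are forced, exactly as $\mathcal{AO}_1$-membership forced the inverse of $(\mathbf{A})$, so $\overline{\Phi}[m]$ is a bijection.

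The hard part will be the case analysis over the five algebras together with the split $\Lambda=\Lambda_n$ versus $\Lambda\neq\Lambda_n$ for $B^{(1)}_n$, because the defining conditions of $\mathcal{AO}_2$ in \eqref{Table: AO_2} are not uniform: ``no repetition'' for $D^{(2)}_{n+1}$ and $A^{(2)}_{2n-1}$, ``no multiple of $\U$'' for $A^{(2)}_{2n}$, $D^{(1)}_n$ and $B^{(1)}_n(\Lambda_n)$, and a mixed condition for $B^{(1)}_n(\Lambda\neq\Lambda_n)$. For each of these I must verify that (i) every intermediate sequence remains in $\mathcal{Z}$ — the most delicate point, since subtracting $\U$ or deleting a pair can create or destroy an equality allowed by $\OS^t\{(\z_1,\z_2);\z_3\}$; (ii) the maximality of the chosen offending part forces the final $Y'$ to meet the precise $\mathcal{AO}_2$ condition of its type; (iii) the recorded decrements genuinely form a partition of $k$; and (iv) $\overline{\Phi}[m]$ and the left-insertion map undo one another. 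I expect (i) and (ii) to absorb most of the effort, being somewhat heavier than in the preceding proposition because the repetition conditions on the $\mathcal{AO}_2$ side have no analogue on the $\mathcal{AO}_1$ side. Once this proposition is established alongside the preceding one, Theorem \ref{Thm: Andrews-Olsson identity} follows by strong induction on $m$, since for $i=1,2$ the two bijections yield $|\mathcal{Z}[m]|=|\mathcal{AO}_i[m]|+\sum_{k>0,\,m-k\U\ge 0}|\mathcal{AO}_i[m-k\U]|\cdot|\OP[k]|$ while the left-hand side does not depend on $i$.
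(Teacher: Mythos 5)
Your overall plan (an explicit reduction algorithm on $\mathcal{AO}_2^c[m]$ inverted by the left insertion $\hookrightarrow$) has the right shape, but the mechanism you propose for the main defect is not the one that works, and I don't think it can be made to work as stated. When a part $y_i$ of $Y$ is a forbidden multiple of $\U$, subtracting $\U$ from it leaves it a multiple of $\U$; the defect is never repaired by subtraction (short of driving the part all the way to $0$), and a single subtraction typically destroys weak decrease and $\mathcal{Z}$-membership (e.g.\ $(2\U,\U+1,\ldots)\mapsto(\U,\U+1,\ldots)$). The paper's algorithm $(\mathbf B)$ instead treats both defects uniformly by \emph{deleting whole parts}: it locates the largest value of the form $\widehat{\lambda}\cdot\epsilon^{-1}\U$ or $\widehat{\lambda}\cdot\overline{\epsilon^{-1}\U}$ occurring among the parts (with $\widehat{\lambda}$ processed in increasing order, which is what makes the recorded data a partition), removes exactly $a=\lfloor\epsilon^{-1}t\rfloor-c(y_i)$ copies of it — not the whole equal run, and not a decrement of $\U$ — and prepends $a$ copies of $\widehat{\lambda}$ to $\lambda$. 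Deleting parts from a weakly decreasing sequence keeps it weakly decreasing and in $\mathcal{Z}$, so the delicate intermediate checks you anticipate under (i) largely evaporate.

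The inverse you describe is correspondingly off: ``add $\lambda'_i\cdot\U$ back to the $i$-th part of $Y'$'' is the inverse of Algorithm $(\mathbf A)$ from the previous proposition, not of this one. Since every relevant part of $Y'\in\mathcal{AO}_2$ is a non-multiple of $\U$, adding multiples of $\U$ to existing parts can never recreate an element of $\mathcal{AO}_2^c$ whose defect is possessing a part divisible by $\U$, so your map would miss most of the domain and cannot be a bijection; nor is the split between ``additions'' and ``insertions'' forced as you claim. The paper's inverse is pure insertion, with no modification of the parts of $Y'$:
$$Y=(\lambda'_r\cdot\epsilon^{-1}\U)^{1+\lfloor\epsilon/2\rfloor}\hookrightarrow\cdots\hookrightarrow(\lambda'_1\cdot\epsilon^{-1}\U)^{1+\lfloor\epsilon/2\rfloor}\hookrightarrow Y',$$
the exponent $1+\lfloor\epsilon/2\rfloor$ being exactly calibrated against the multiplicity $a$ removed in step $(\mathbf B3)$. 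Your final paragraph (deducing Theorem \ref{Thm: Andrews-Olsson identity} by strong induction on $m$ once both propositions are in hand) is fine and matches the paper.
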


\begin{proof}
Let $\Phi[m]$ be a map from $\mathcal{AO}_2^{c}[m] $ to
$\displaystyle \bigsqcup_{k > 0, \ m-k \U \ge 0} \mathcal{AO}_2[m-k
\U]$ given by the following algorithm $({\mathbf B})$:
\begin{itemize}
\item[(${\mathbf B}$1)] Let $Y \in \mathcal{AO}_2^{c}[m]$ be given. Set $Y^{(0)}=Y$, $\lambda^{(0)}=(0)$,
$\widehat{\lambda}_0=0$ and $l=0$.
\item[(${\mathbf B}$2)] Find the maximal $i$ such that
\begin{align*}
y^{(l)}_i =\widehat{\lambda}_{l+1} \cdot \epsilon^{-1}\U \ \text{ or }
\ \widehat{\lambda}_{l+1} \cdot \overline{\epsilon^{-1}\U} \ \text{ for
some } \ \widehat{\lambda}_{l+1} \in \N \text{ with } \widehat{\lambda}_{l+1}> \lambda^{(l)}_{1}
\end{align*}
and set $t_{l+1}$ to be the number of parts in $Y^{(l)}$ which are equal to $y^{(l)}_i$.
\item[(${\mathbf B}$3)] Set $a = \lfloor \epsilon^{-1} t_{l+1} \rfloor- c(y^{(l)}_i)$
and define
$$Y^{(l+1)}\seteq(y^{(l)}_1,y^{(l)}_2,...,y^{(l)}_{i-a},y^{(l)}_{i+1},...)
\ \text{ and } \ \lambda^{(l+1)}=
(\underbrace{\widehat{\lambda}_{l+1},...,\widehat{\lambda}_{l+1}}_a)*\lambda^{(l)}.
$$
\item[(${\mathbf B}$4)] If $Y^{(l+1)} \in \mathcal{AO}_2$, then
define $Y'=Y^{(l+1)}$ and terminate this algorithm. Otherwise, set $l=l+1$ and go to $({\mathbf B}2)$.
\end{itemize}

This algorithm terminates in finitely many steps and we have
$$k \seteq \dfrac{\Sigma_Y-\Sigma_{Y'}}{\U} \in \N, \quad
Y' \in \mathcal{AO}_2[m-k  \U], \quad \lambda^{(l)} \in \OP[k].$$
Thus we obtain the map
$$\overline{\Phi}[m]: \ \mathcal{AO}_2^{c}[m] \to \displaystyle
\bigsqcup_{k > 0, \ m-k  \U \ge 0} (\mathcal{AO}_2[m-k \U] \times
\OP[k] )$$ given by $Y \mapsto (Y',\lambda^{(l)})$. Then
$\overline{\Phi}[m]$ is a bijection. Moreover, the preimage $Y$ of
$(Y',\lambda') \in \mathcal{AO}_2[m-k  \U]  \times  \OP[k]$ under
the map $\overline{\Phi}[m]$ is given as follows:
$$Y\seteq (\lambda'_r \cdot\epsilon^{-1}\U)^{1+\lfloor\epsilon/2 \rfloor} \hookrightarrow (\lambda'_{r-1} \cdot\epsilon^{-1}\U)^{1+\lfloor\epsilon/2 \rfloor} \hookrightarrow \cdots (\lambda'_1 \cdot\epsilon^{-1}\U)^{1+\lfloor\epsilon/2 \rfloor} \hookrightarrow Y',$$
where $\lambda'=(\lambda'_1,\ldots,\lambda'_{r-1},\lambda'_{r}).$
\end{proof}

\begin{example} In this example, we assume that $\g_{n}=A^{(2)}_{7}$ and apply Algorithm $({\mathbf B})$ for
$$  Y=(20,14,14,13,11,\overline{7},\overline{7},\overline{7},5,3) \in \mathcal{AO}_2^c[102].$$
Note that $\U=7$ and $\epsilon=1$. In the first round, the $i$ given in $({\mathbf B}2)$ is $8$. Thus we have
$$ \widehat{\lambda}_1=1 \quad \text{ and } \quad t_1= 3.$$
Since $y_8=y^{(0)}_8=\overline{7}$, the $a$ given in $({\mathbf B}3)$ is $t_1-1=2$. Hence the results in the first round are
$$ Y^{(1)}=(20,14,14,13,11,\overline{7},5,3) \quad \text{ and } \quad \lambda^{(1)}=(1,1).$$
As $y^{(1)}_2=y^{(1)}_3= 14 =2\U$, we have to run the second round. In this round,  the $i$ given in $({\mathbf B}2)$ is $3$ and
$$ \widehat{\lambda}_2=2 \quad \text{ and } \quad t_2= 2.$$
Because $c(14)=0$, the $a$ given in $({\mathbf B}3)$ is equal to $t_2=2$. Hence
$$ Y^{(2)}=(20,13,11,\overline{7},5,3) \quad \text{ and } \quad \lambda^{(2)}=(2,2,1,1).$$
Finally, we have $ Y^{(2)} \in \mathcal{AO}_2[59]$. Thus ($Y^{(2)}$,$\lambda^{(2)}$) is the result obtained by Algorithm $({\mathbf B})$.
\end{example}

\noindent \textbf{Proof of Theorem \ref{Thm: Andrews-Olsson
identity} } By definitions, $\mathcal{AO}_1[t]=\mathcal{AO}_2[t]$
for $0 \le t \le \U-\lfloor \epsilon/2 \rfloor$. For the case of
$D^{(2)}_{n+1}$, $\mathcal{AO}_1[\U] \setminus \{
(\V,\V,0,...) \}
=\mathcal{AO}_2[\U] \setminus \{ (\U,0,0,...) \}$. Thus
$$|\mathcal{AO}_1[t]|=|\mathcal{AO}_2[t]| \quad \text{for} \quad 0 \le t \le \U.$$
Proposition \ref{Prop: Algorithm B} tells us that for all $m >
\U$, $|\mathcal{AO}_i^c[m]|$ ($i=1,2$) depends on the sets $\mathcal{AO}_i[l]$ and $\OP[k]$ satisfying $k =
\dfrac{m-l}{\U} \in \N$. Using an induction on $m$, we can
conclude that  $|\mathcal{AO}_1^c[m]|=|\mathcal{AO}_2^c[m]|$. Hence
$$|\mathcal{AO}_1[m]|=|\mathcal{AO}_2[m]|.$$

Recall the definition of $\OS_{(\U)}$ given in \eqref{Def: S_{U}}.

\begin{corollary} \label{Cor: generating function}
For the types of $D^{(2)}_{n+1}$, $A^{(2)}_{2n}$ and $A^{(2)}_{2n-1}$,
\begin{enumerate}
\item the generating functions of $\mathcal{AO}_i$ $(i=1,2)$ are
\begin{align*}
    \chi_{\g_n}^{\Lambda}(t)=\prod_{i=1}^{\infty} (1+t^i)^{\kappa_i}, \quad \text{ where }
 \begin{cases} \kappa_i=0 &\text{ if } \g_n= A^{(2)}_{2n} \text{ and } i \equiv 0 \ {\rm mod} \ \U, \\
               \kappa_i=1 &\text{ otherwise},\end{cases}
\end{align*}
\item $ |\mathcal{Z}[m]| = \displaystyle \sum_{k \ge 0,\ m- k \U \ge 0}
(|\OS'[m-k \U] | \times |\OP[k]|)$, where $\begin{cases} \OS'=\OS_{(\U)} &\text{ if } \g_n= A^{(2)}_{2n}, \\
              \OS'=\OS &\text{ otherwise}.\end{cases}$
\end{enumerate}
\end{corollary}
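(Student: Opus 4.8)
The plan is to deduce the corollary from the Andrews--Olsson identity (Theorem \ref{Thm: Andrews-Olsson identity}), the crystal-theoretic expression for $\chi^{\Lambda}_{\g_n}(t)$ in \eqref{eqn:ch in YW}, the Fock space decomposition \eqref{eqn:Fock decomposition}, and the explicit descriptions of $\mathcal{AO}_2$ collected in Remark \ref{Rmk: AO2}.

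For part (1) I would first unwind the identifications. By \eqref{eqn:ch in YW} we have $\chi^{\Lambda}_{\g_n}(t)=\sum_{m}|\mathtt{Y}(\Lambda)[m]|\,t^m$, and by Remark \ref{Rmk: AO2}(1) the map $\mathsf{P}^t$ restricts, for each $m$, to a bijection from $\mathtt{Y}(\Lambda)[m]$ onto $\mathcal{AO}_1[m]$ (it sends a Young wall with $m$ blocks on $Y_\Lambda$ to a two-colored partition $\lambda$ with $\Sigma_\lambda=m$). Hence $\chi^{\Lambda}_{\g_n}(t)=\sum_m|\mathcal{AO}_1[m]|\,t^m$, and Theorem \ref{Thm: Andrews-Olsson identity} turns this into $\sum_m|\mathcal{AO}_2[m]|\,t^m$; thus $\mathcal{AO}_1$ and $\mathcal{AO}_2$ share this generating function, and it remains to evaluate it. For $\g_n=D^{(2)}_{n+1}$ and $\g_n=A^{(2)}_{2n-1}$, Remark \ref{Rmk: AO2}(3)--(4) identifies $\mathcal{AO}_2$, compatibly with the grading by $\Sigma$, with the set $\OS$ of all strict partitions, whose generating function is Euler's product $\prod_{i\ge1}(1+t^i)$; since $\g_n\ne A^{(2)}_{2n}$ here, all $\kappa_i$ equal $1$ and this is the asserted formula. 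For $\g_n=A^{(2)}_{2n}$, Remark \ref{Rmk: AO2}(5) identifies $\mathcal{AO}_2$ with $\OS_{(\U)}$, the strict partitions no part of which is divisible by $\U$; a part of size $i$ contributes the factor $(1+t^i)$ when $\U\nmid i$ and the factor $1$ when $\U\mid i$, so the generating function is $\prod_{i\ge1}(1+t^i)^{\kappa_i}$ with $\kappa_i=0$ exactly when $\U\mid i$, as claimed.

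For part (2) I would feed the same identifications into \eqref{eqn:Fock decomposition}. By Remark \ref{Rmk: AO2}(1), $\mathsf{P}^t$ also gives $|\mathcal{Z}[m]|=|\mathtt{Z}(\Lambda)[m]|$, so \eqref{eqn:Fock decomposition} becomes $|\mathcal{Z}[m]|=\sum_{k\ge0,\ m-k\Delta\ge0}|\mathtt{Y}(\Lambda)[m-k\Delta]|\cdot|\OP[k]|$. Comparing \eqref{eqa:Delta} with \eqref{Table: W^1 W^2} one checks $\Delta=\U$ for all three types in question ($\Delta=\U=2n+1$ for $A^{(2)}_{2n}$, $\Delta=\U=2n-1$ for $A^{(2)}_{2n-1}$, $\Delta=\U=2n+2$ for $D^{(2)}_{n+1}$). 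Finally, by the argument of part (1), $|\mathtt{Y}(\Lambda)[m-k\U]|=|\mathcal{AO}_1[m-k\U]|=|\mathcal{AO}_2[m-k\U]|$, and this last quantity equals $|\OS_{(\U)}[m-k\U]|$ when $\g_n=A^{(2)}_{2n}$ and $|\OS[m-k\U]|$ otherwise; substituting yields precisely the displayed formula with $\OS'$ as stated.

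The argument is essentially bookkeeping, so I do not expect a serious obstacle. The two points that require genuine care are that the identifications of $\mathcal{AO}_2$ with $\OS$ or $\OS_{(\U)}$ in Remark \ref{Rmk: AO2} -- in particular the ``overline the parts divisible by $\U$'' bijection underlying the $A^{(2)}_{2n-1}$ case -- preserve the grading by $\Sigma$, and that $\mathsf{P}^t$ preserves the number of blocks on $Y_\Lambda$, so that $|\mathtt{Y}(\Lambda)[m]|=|\mathcal{AO}_1[m]|$ and $|\mathtt{Z}(\Lambda)[m]|=|\mathcal{Z}[m]|$ hold degree by degree. Granting these, both parts follow from the computations above and Euler's product formula.
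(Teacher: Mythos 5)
Your proposal is correct and follows essentially the same route as the paper, whose proof is just the one-line observation that $\Delta=\U$ for these types together with an appeal to Proposition \ref{Prop:Fock decomposition} and Remark \ref{Rmk: AO2}; you have merely written out the chain $\mathtt{Y}(\Lambda)[m]\leftrightarrow\mathcal{AO}_1[m]\leftrightarrow\mathcal{AO}_2[m]\leftrightarrow\OS'[m]$ and the Euler-product evaluation explicitly. Your two flagged points of care (that the identifications in Remark \ref{Rmk: AO2} and the map $\mathsf{P}^t$ preserve the grading by $\Sigma$) are exactly the right ones and both hold.
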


\begin{proof}
Note that $\Delta$ in $\eqref{eqa:Delta}$ coincides with $\U$.
By Proposition \ref{Prop:Fock decomposition} and Remark \ref{Rmk: AO2}, our assertions follow.
\end{proof}

\begin{remark} \label{rmk: injective p}
Note that we can define the virtual character on $\mathcal{AO}_i$
by regarding the partitions in $\mathcal{AO}_i$ as Young walls in $\mathtt{Z}(\Lambda)$ $(i=1,2)$. If
we have $\lambda=(3,2,1) \in \mathcal{AO}_2$ when $\g=D^{(2)}_3$ and $\Lambda=\Lambda_0$, $\lambda$ corresponds to the following Young wall:
$$\DthreetwoEx $$
Then one can assign a weight of $\lambda$ as $\Lambda_0-(3\alpha_0+2\alpha_1+2\alpha_2)$.
\end{remark}

The following theorem tells that Theorem \ref{Thm: Andrews-Olsson identity} for types of $D^{(2)}_{n+1}$ and
$A^{(2)}_{2n}$ can be interpreted in stronger sense.

\begin{theorem} \label{Thm: virtual cahracter}
For the types of $D^{(2)}_{n+1}$ and $A^{(2)}_{2n}$, we have
$$\chi_{\g_n}^{\Lambda}=\vch_{\g_n}(\mathcal{AO}_2). $$
\end{theorem}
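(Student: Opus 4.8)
The plan is to promote the bijective Andrews--Olsson identity (Theorem \ref{Thm: Andrews-Olsson identity}) to an identity of \emph{weighted} sets and to feed it into the Fock-space decomposition of Proposition \ref{Prop:Fock decomposition}. Throughout I transport the weight $\wt$ along the injection $\mathsf{P}^t$ of Remark \ref{Rmk: AO2}(1), so that $\vch_{\g_n}(\mathcal{Z})=\vch_{\g_n}(\mathtt{Z}(\Lambda))$ and so that $\vch_{\g_n}(\mathcal{AO}_2)$ is defined as in Remark \ref{rmk: injective p}. Set $G:=\sum_{k\ge 0}|\OP(k)|\,e(-\epsilon k\delta)=\prod_{i\ge 1}(1-e(-\epsilon i\delta))^{-1}$, a power series in $e(-\epsilon\delta)$ with constant term $1$, hence invertible.

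First I would record the weighted version of the Fock-space decomposition. By \eqref{eqn:ch in crystal}, $\vch_{\g_n}(B(\Lambda-\epsilon k\delta))=\chi^{\Lambda-\epsilon k\delta}_{\g_n}$, and since $B(\Lambda-\epsilon k\delta)$ is the same level-one crystal as $B(\Lambda)$ with every weight translated by $-\epsilon k\delta$, this equals $e(-\epsilon k\delta)\,\chi^{\Lambda}_{\g_n}$. Hence Proposition \ref{Prop:Fock decomposition} yields
\[
\vch_{\g_n}(\mathcal{Z})=\vch_{\g_n}(\mathtt{Z}(\Lambda))=\sum_{k\ge 0}|\OP(k)|\,\chi^{\Lambda-\epsilon k\delta}_{\g_n}=G\cdot\chi^{\Lambda}_{\g_n}.
\]

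The main step is to show $\vch_{\g_n}(\mathcal{Z})=G\cdot\vch_{\g_n}(\mathcal{AO}_2)$ by making Algorithm $(\mathbf{B})$ of Proposition \ref{Prop: Algorithm B} weight-aware. Here one uses that for $\g_n=A^{(2)}_{2n}$ and $D^{(2)}_{n+1}$ there are no half-thickness blocks and the stacking pattern --- together with the ground-state Young wall --- is the same in every column; consequently $\Lambda-\wt(Y)$ is the sum over columns of the content of the blocks stacked in that column, which depends only on the height of the column. From the explicit patterns of Section \ref{Sec: Young wall Partition} one then checks two facts: (i) any $\U$ consecutive blocks of a column pattern constitute one $\delta$-column, of content $\epsilon\delta$; and (ii) a column of height $\epsilon^{-1}\U\cdot j$ built on the ground state has content exactly $j\delta$ (for $\epsilon=1$ this is (i); for $\epsilon=2$, the type $D^{(2)}_{n+1}$, one verifies that each successive run of $\epsilon^{-1}\U=n+1$ blocks above the ground state has content $\delta$, even though consecutive such runs are different multisets of blocks). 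Granting (i)--(ii), if $\overline{\Phi}[m]$ sends $Y\in\mathcal{AO}_2^{c}[m]$ to $(Y',\lambda')$ with $\lambda'\vdash k$ --- so that, by the preimage formula, $Y$ is recovered from $Y'$ by adjoining, for each part $\lambda'_j$ of $\lambda'$, a total of $1+\lfloor\epsilon/2\rfloor$ columns each of height $\lambda'_j\cdot\epsilon^{-1}\U$ --- then by (ii) each such column has content $\lambda'_j\delta$, so these columns together contribute $\epsilon\lambda'_j\delta$ to $\Lambda-\wt$, whence $\wt(Y)=\wt(Y')-\epsilon k\delta$. Summing $e(\wt(-))$ over the decomposition $\mathcal{Z}[m]=\mathcal{AO}_2[m]\sqcup\mathcal{AO}_2^{c}[m]$ and over all $m$ then gives $\vch_{\g_n}(\mathcal{Z})=\sum_{k\ge 0}|\OP(k)|\,e(-\epsilon k\delta)\,\vch_{\g_n}(\mathcal{AO}_2)=G\cdot\vch_{\g_n}(\mathcal{AO}_2)$.

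Comparing the two formulas for $\vch_{\g_n}(\mathcal{Z})$ and cancelling the invertible factor $G$ then gives $\vch_{\g_n}(\mathcal{AO}_2)=\chi^{\Lambda}_{\g_n}$. The delicate point --- and the reason the theorem is limited to $A^{(2)}_{2n}$ and $D^{(2)}_{n+1}$ --- is fact (ii): inserting a fresh column (rather than merely extending an existing one, as in Algorithm $(\mathbf{A})$) must still change the weight by an \emph{integral} multiple of $\delta$, and this fails for $A^{(2)}_{2n-1}$, $B^{(1)}_n$, $D^{(1)}_n$, whose stacking patterns alternate between neighbouring columns so that inserting a column reshuffles the patterns of all later columns. (As a sanity check, the exceptional pair $(\V,\V,0,\dots)\leftrightarrow(\U,0,0,\dots)$ arising in the proof of Theorem \ref{Thm: Andrews-Olsson identity} for $D^{(2)}_{n+1}$ does have weight $\Lambda-2\delta$ on both sides.)
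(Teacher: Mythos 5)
Your argument is correct, and it reaches the theorem by a genuinely different route from the paper's. The paper verifies $\vch_{\g_n}(\mathtt{Y}[t])=\vch_{\g_n}(\mathcal{AO}_2[t])$ directly for $0\le t\le\U$ and then repeats, at the level of virtual characters, the induction used for Theorem \ref{Thm: Andrews-Olsson identity}: it upgrades \emph{both} Algorithm $(\mathbf{A})$ and Algorithm $(\mathbf{B})$ to weight-compatible bijections (each shifting weights by prescribed integral multiples of $\delta$) and compares the two complements inside the common ambient set $\mathcal{Z}[m]$. You instead compute $\vch_{\g_n}(\mathcal{Z})$ twice --- once as $G\cdot\chi^{\Lambda}_{\g_n}$ via Proposition \ref{Prop:Fock decomposition} together with $\chi^{\Lambda-\epsilon k\delta}_{\g_n}=e(-\epsilon k\delta)\,\chi^{\Lambda}_{\g_n}$, and once as $G\cdot\vch_{\g_n}(\mathcal{AO}_2)$ via the weighted form of Algorithm $(\mathbf{B})$ alone --- and cancel the invertible series $G$. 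This buys you: no induction, no base-case check (including the exceptional pair for $D^{(2)}_{n+1}$, which you verify anyway as a sanity check), and no need to make Algorithm $(\mathbf{A})$ weight-aware; the price is the extra representation-theoretic input $\chi^{\Lambda-\epsilon k\delta}=e(-\epsilon k\delta)\chi^{\Lambda}$ and the Fock-space decomposition of \cite{KK04}, which the paper already invokes for Corollary \ref{Cor: generating function} in any case. Both proofs ultimately rest on the same combinatorial facts (i)--(ii), and your diagnosis of why the statement is confined to $A^{(2)}_{2n}$ and $D^{(2)}_{n+1}$ (the stacking pattern is the same in every column, so inserting or deleting an entire column shifts the weight by an integral multiple of $\delta$) matches the paper's stated reason (absence of half-thickness blocks). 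One small correction to your verification of (ii) for $D^{(2)}_{n+1}$: consecutive runs of $n+1$ blocks above the ground state are \emph{not} different multisets --- each run consists of exactly one $i$-block for every $i\in I$ (the sequence $0,1,\dots,n$ followed by $n,n-1,\dots,0$), only the order reverses --- so the content of each run is $\delta$ for the immediate reason that it is a single copy of every color, and no further verification is needed.
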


\begin{proof}
By definition, Young walls of types  $D^{(2)}_{n+1}$ and $A^{(2)}_{2n}$ do not contain
half-thickness blocks. Then we have $\vch_{\g_n}(\mathtt{Y}[t]) =
\vch_{\g_n}(\mathcal{AO}_2[t])$ for $0 \le t \le \U$. Using an
induction, the assertion holds in the similar way of Theorem \ref{Thm:
Andrews-Olsson identity}.
\end{proof}

\begin{proposition} \label{Prop: C}
For the types of $B^{(1)}_{n}$, $D^{(1)}_{n}$ and $m \in \Z_{\ge 0}$, we have bijections
$$ \overline{\Xi}[m] : \mathcal{AO}_2[m]  \to \bigsqcup_{k=0}^{ \lfloor \frac{m}{ \z_1} \rfloor} (\OS'[m - k \z_1]
\times \OS[k]), \text{ where }
\begin{cases} \OS'=\OS_{(\U)} &\text{ if } \g_n= B^{(1)}_{n} \text{ and } \Lambda=\Lambda_n, \\
              \OS'=\OS &\text{ otherwise}.\end{cases}$$
\end{proposition}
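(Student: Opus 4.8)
The plan is to construct the bijection $\overline{\Xi}[m]$ explicitly, in the same spirit as the maps $\overline{\Psi}$ and $\overline{\Phi}$ constructed earlier in this section: given $\lambda\in\mathcal{AO}_2[m]$ we split off the part of the data responsible for repeated parts, record it as a strict partition $\nu$, and leave a strict partition lying in $\OS'$. The first step is to make explicit, from the description of $\mathcal{AO}_2(\Lambda)$ in \eqref{Table: AO_2} together with condition (ii) in the definition of $\OS^t\{(\z_1,\z_2);\z_3\}$ and the values $(\z_1,\z_2,\z_3),\U,\V$ recorded in \eqref{Table: W^1 W^2}, exactly which parts of a $\lambda\in\mathcal{AO}_2(\Lambda)$ are allowed to occur with multiplicity $\ge 2$; for $\g_n=B^{(1)}_n$ and $D^{(1)}_n$ these turn out to be precisely the parts whose value is an odd multiple of $\z_1$, with the color $c$ constant along each maximal block of equal parts.

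Next I would write $\mu$ for the subsequence of $\lambda$ obtained by keeping every part that occurs only once together with one representative (carrying the block's color) from each repeated block, and $\rho$ for the multiset of the remaining copies. One checks that $\rho$ is a partition all of whose parts are odd multiples of $\z_1$, so that dividing its parts by $\z_1$ gives a partition $\widetilde{\rho}$ into odd parts of some integer $k$, with $\Sigma_\mu=m-\z_1 k$. Applying Euler's classical odd$\leftrightarrow$strict bijection to $\widetilde{\rho}$ yields $\nu\in\OS[k]$, and one defines $\overline{\Xi}[m](\lambda)=(\mu,\nu)$ after verifying $\mu\in\OS'[m-\z_1 k]$. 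The inverse reverses these steps: from $(\mu,\nu)\in\OS'[m-\z_1 k]\times\OS[k]$, run Euler's bijection backwards to turn $\nu$ into an odd partition of $k$, scale up by $\z_1$, and reinsert the extra copies into $\mu$ in decreasing order of value using the left-insertion $(\,\cdot\,)^j\hookrightarrow(\,\cdot\,)$; one then checks the result lies in $\mathcal{AO}_2[m]$ and that the two constructions are mutually inverse. The case split in the statement — $\OS'=\OS_{(\U)}$ for $\g_n=B^{(1)}_n$, $\Lambda=\Lambda_n$, and $\OS'=\OS$ otherwise — corresponds to whether parts whose value is a multiple of $\U$ survive in $\mu$ or must be absorbed into the repeated data, and this has to be checked type by type against \eqref{Table: AO_2} and the shape of $\mathsf{N}\{(\z_1,\z_2)\}$.

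The main obstacle I expect is the bookkeeping of the coloring map $c$ and of the $\OP^t$-constraint that no value may appear in both colors: one must ensure that splitting off $\rho$ and later reinserting it respects these constraints, and that the color of each repeated block is recoverable — which is why it is kept on the single representative retained in $\mu$ — so that Euler's step on $\widetilde{\rho}$ is genuinely color-blind. A second delicate point is showing the image of $\overline{\Xi}$ is exactly $\OS'$ (not a proper subset), which is where the values $\z_1,\z_3,\U,\V$ interact in each type.

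As a consistency check, and an alternative proof of the numerical identity $|\mathcal{AO}_2[m]|=\sum_{k}|\OS'[m-k\z_1]|\cdot|\OS[k]|$ underlying the proposition, one can combine Theorem \ref{Thm: Andrews-Olsson identity}, Remark \ref{Rmk: AO2} and the explicit generating functions from the Introduction: in each case listed there $\chi^{\Lambda}_{\g_n}(t)$ factors as $\bigl(\sum_j|\OS'[j]|t^j\bigr)\cdot\prod_{i\ge1}(1+t^{\z_1 i})$, and $\prod_{i\ge1}(1+t^{\z_1 i})=\sum_k|\OS[k]|t^{\z_1 k}$, which matches the claimed right-hand side; the explicit bijection $\overline{\Xi}$ above refines this equality.
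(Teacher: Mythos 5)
Your overall strategy is the same as the paper's: Algorithm $({\mathbf C})$ in the paper likewise peels the excess copies of the repeatable parts off an element of $\mathcal{AO}_2[m]$, records them as an odd partition of $k$, leaves a residue in $\OS'[m-k\z_1]$, and finishes with Sylvester's odd--strict bijection. However, the two points you flag as ``delicate'' are exactly where your version breaks, and they are not mere bookkeeping. First, the repeatable parts are \emph{not} in general the odd multiples of $\z_1$: for $B^{(1)}_n$ with $\Lambda\neq\Lambda_n$ one has $\z_1=2n=\U$, so odd multiples of $\z_1$ are multiples of $\U$ and are excluded from $\mathcal{AO}_2$ altogether; the repeatable values there are the odd multiples of $n=\z_1/2$, and the paper's algorithm accordingly works with $\tilde{\epsilon}^{-1}\z_1$ where $\tilde{\epsilon}=2$ in precisely this case. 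Your scaling-by-$\z_1$ step therefore does not even produce an integer partition of $k$ in that case.

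Second, and more seriously, the rule ``keep one representative carrying the block's color'' does not give a bijection onto $\OS'[m-k\z_1]\times\OS[k]$. Fix a repeatable value $v$ and compare states of total weight $tv$: in $\mathcal{AO}_2$ there is one state for $t=0$ (absent) and two states (the two colors) for each $t\ge 1$; in the target, $v$ contributes one state $(\text{absent},0)$ and two states $(\text{absent},t)$, $(\text{present},t-1)$ for each $t\ge1$, where ``present/absent'' refers to the \emph{uncolored} residue in $\OS'$. The paper matches these by the rule $a=\lfloor t/\tilde{\epsilon}\rfloor-c$, i.e.\ the color bit $c$ is \emph{converted into} the presence bit of the residue ($c=0$: remove all $t$ copies; $c=1$: leave one behind), which is why the overline can be dropped at the end. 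Under your rule the image consists only of pairs in which every part of the scaled-up odd partition also occurs in $\mu$, so pairs $(\mu,\nu)$ with a part of $\nu$ unmatched in $\mu$ are never attained; and if you instead forget the retained color to land in the uncolored set $\OS'$, the map stops being injective (the two colors of a block of multiplicity $t$ collide). Either way the claimed bijection fails. Your generating-function check is consistent with the \emph{numerical} identity, but it cannot repair the map, and since the product formulae in the introduction are themselves consequences of this proposition it cannot be used as an independent proof.
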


\begin{proof} Set
$$\tilde{\epsilon}= \begin{cases}2 & \text{ if } \g_n = B^{(1)}_{n} \text{ and } \Lambda \neq \Lambda_n, \\
1 & \text{ otherwise}. \end{cases}$$

Let $\Xi[m]$ be a map from $\mathcal{AO}_2[m] $ to $\displaystyle
\bigsqcup_{k \ge 0, \ m-k \z_1 \ge 0}  \OS'[m-k \z_1]$ given by the
following algorithm $({\mathbf C})$:
\begin{itemize}
\item[(${\mathbf C}$1)] Let $Y \in \mathcal{AO}_2[m]$ be given. Set $Y^{(0)}=Y$,
$\lambda^{(0)}$=(0) $\widehat{\lambda}_0=0$ and $l=0$. If $|Y| \in \OS'[m]$, define $Y'=Y$ and terminate this algorithm.
\item[(${\mathbf C}$2)] Find the maximal $i$ such that
\begin{align*}
y^{(l)}_i =\widehat{\lambda}_{l+1} \cdot \tilde{\epsilon}^{-1}\z_1
\quad \text{ or } \quad \widehat{\lambda}_{l+1} \cdot
\overline{\tilde{\epsilon}^{-1}\z_1} \text{ for some
$\widehat{\lambda}_{l+1} \in 2\Z_{\ge 0}+1$} \text{ with } \widehat{\lambda}_{l+1} > \lambda^{(l)}_1
\end{align*}
and set $t_{l+1}$ to be the number of parts in $Y^{(l)}$ which are equal to $y^{(l)}_i$.
\item[(${\mathbf C}$3)] Set $a = \lfloor \frac{t_{l+1}}{\tilde{\epsilon}} \rfloor-c(y^{(l)}_i)$
and define
$$Y^{(l+1)}\seteq(y^{(l)}_1,y^{(l)}_2,...,y^{(l)}_{i-a},y^{(l)}_{i+1},...)
\text{ and } \lambda^{(l+1)}=
(\underbrace{\widehat{\lambda}_{l+1},...,\widehat{\lambda}_{l+1}}_a)*\lambda^{(l)}.
$$
\item[(${\mathbf C}$4)] If $Y^{(l+1)} \in \OS'$, then
define $Y'=Y^{(l+1)}$ and terminate this algorithm. Otherwise, set $l=l+1$ and go to $({\mathbf C}2)$.
\end{itemize}
This algorithm terminates in finitely many steps and we have
\begin{align*}
k\seteq\dfrac{\Sigma_Y-\Sigma_{\overline{Y}}}{\z_1} \in \Z_{\ge 0},
\ Y' \in \OS'[m-k \z_1], \ \lambda\seteq\lambda^{(l)} \in
\mathscr{O}[k].
\end{align*}
Thus we obtain the desired map
$$\overline{\Xi}[m]: \ \mathcal{AO}_2[m] \to \displaystyle
\bigsqcup_{k \ge 0, \ m-k \z_1 \ge 0} (\OS'[m-k \z_1] \times
\mathscr{O}[k] )$$ given by $Y \mapsto (Y',\lambda)$. Then
$\overline{\Xi}[m]$ is a bijection. Moreover the preimage $Y$ of
$(Y',\lambda') \in \OS'[m-k \z_1]  \times \OS[k] $ under the map
$\overline{\Xi}[m]$ is given as follows:
$$Y \seteq (\lambda'_r \cdot \tilde{\epsilon}^{-1}\z_1)^{1+\lfloor\tilde{\epsilon}/2 \rfloor} \hookrightarrow
(\lambda'_{r-1} \cdot
\tilde{\epsilon}^{-1}\z_1)^{1+\lfloor\tilde{\epsilon}/2 \rfloor}
\hookrightarrow \cdots (\lambda'_1 \cdot
\tilde{\epsilon}^{-1}\z_1)^{1+\lfloor\tilde{\epsilon}/2 \rfloor}
\hookrightarrow Y'.$$ Thus, by the Sylvester's bijection, we have a
bijection
$$\mathcal{AO}_2[m]  \to \bigsqcup_{k=0}^{ \lfloor \frac{m}{\z_1} \rfloor}(\OS'[m - k \z_1] \times \OS[k]).$$
\end{proof}

\begin{example} Assume that $\g_{n}=B^{(1)}_{3}$ and $\Lambda=\Lambda_n$. Now, we apply Algorithm $({\mathbf C})$ for
$$  Y=(31,17,\overline{15},\overline{15},13,7,5,3,3) \in \mathcal{AO}_2[109].$$
Note that $\U=6$, $\z_1=3$ and $\tilde{\epsilon}=1$. In the first round, the $i$ given in $({\mathbf C}2)$ is $9$. Thus we have
$$ \widehat{\lambda}_1=1 \quad \text{ and } \quad t_1= 2.$$
Since $y_8=y^{(0)}_8=3$, the $a$ given in $({\mathbf C}3)$ is $t_1=2$. Hence the results in the first round are
$$ Y^{(1)}=(31,17,\overline{15},\overline{15},13,7,5) \quad \text{ and } \quad \lambda^{(1)}=(1,1).$$
As $y^{(1)}_3=y^{(1)}_4= \overline{15} =3 \overline{\z_1}$, we have to run the second round. In this round,  the $i$ given in $({\mathbf C}2)$ is $4$ and
$$ \widehat{\lambda}_2=5 \quad \text{ and } \quad t_2= 2.$$
Because $c(\overline{15})=1$, the $a$ given in $({\mathbf C}3)$ is equal to $t_2-1=1$. Hence
$$ Y^{(2)}=(31,17,\overline{15},13,7,5) \quad \text{ and } \quad \lambda^{(2)}=(5,1,1).$$
Finally, we have $ Y^{(2)} \in \OS'[88]$. Thus  $\big( (31,17,15,13,7,5) ,(5,1,1) \big) $ is the result obtained by Algorithm $({\mathbf C})$.
\end{example}

By the similar argument in Corollary \ref{Cor: generating function}, we have the following corollary
from Proposition \ref{Prop: C}:

\begin{corollary} For the types of $B^{(1)}_{n}$ and $D^{(1)}_{n}$,
\begin{enumerate}
\item the generating functions of $\mathcal{AO}_i$ $(i=1,2)$ are
\\
$\chi_{\g_n}^{\Lambda}(t)=\prod_{i=1}^{\infty} (1+t^i)^{\kappa_i}$, where
 $\begin{cases}  &\text{ if } \g_n= B^{(1)}_{n}, \ \Lambda \neq \Lambda_n
              \text{ and } i \equiv 0 \ {\rm mod} \ \U, \\
              \kappa_i=2 & \text{ or if } \g_n= B^{(1)}_{n}, \ \Lambda = \Lambda_n
                \text{ and } i \equiv \z_1 \ {\rm mod} \ \U, \\
              & \text{ or if } \g_n= D^{(1)}_{n} \text{ and } i \equiv 0 \ {\rm mod} \ \z_1, \\
               \kappa_i=1 &\text{ otherwise},\end{cases}$

\item $|\mathcal{Z}[m]|=\displaystyle \sum_{ \substack{l \ge 0, \\ m-l  \U \ge 0}} \left\{
\left( \sum_{ \substack{k \ge 0, \\ m-l  \U -k \z_1 \ge 0}}
\big( |\OS'[m-l \U-k \z_1]|
 \times |\OS[k]| \big) \right) \times |\OP[l]| \right\}$.
\end{enumerate}
\end{corollary}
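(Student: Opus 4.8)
The plan is to derive both parts from the bijections already established --- Theorem~\ref{Thm: Andrews-Olsson identity} and Proposition~\ref{Prop: C} --- together with the Fock-space decomposition \eqref{eqn:Fock decomposition}, so that the real work reduces to two elementary manipulations of infinite products. Throughout I use the identifications of Remark~\ref{Rmk: AO2}: $\mathrm{Im}(\mathtt{Y}(\Lambda))=\mathcal{AO}_1$, $\mathrm{Im}(\mathtt{Z}(\Lambda))=\mathcal{Z}$, and the fact that $\mathsf{P}^t$ sends the number of blocks stacked on $Y_\Lambda$ to $\Sigma_\lambda$, hence is grade-preserving, so that $|\mathtt{Y}(\Lambda)[m]|=|\mathcal{AO}_1[m]|$ and $|\mathtt{Z}(\Lambda)[m]|=|\mathcal{Z}[m]|$ for every $m$.

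\textbf{Part (1).} By \eqref{eqn:ch in YW} the generating function of $\mathcal{AO}_1$ equals $\sum_m|\mathtt{Y}(\Lambda)[m]|t^m=\chi^\Lambda_{\g_n}(t)$, and by Theorem~\ref{Thm: Andrews-Olsson identity} the generating function of $\mathcal{AO}_2$ is the same series; so I only need to evaluate $\sum_{m\ge0}|\mathcal{AO}_2[m]|t^m$. Proposition~\ref{Prop: C} provides a grade-preserving bijection $\mathcal{AO}_2[m]\xrightarrow{\ \sim\ }\bigsqcup_{k\ge0,\,m-k\z_1\ge0}(\OS'[m-k\z_1]\times\OS[k])$, where $\OS'=\OS_{(\U)}$ for $\g_n=B^{(1)}_n$, $\Lambda=\Lambda_n$ and $\OS'=\OS$ otherwise, the $\OS[k]$-factor contributing $k\z_1$ to the degree. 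Summing over $m$ gives
$$\sum_{m\ge0}|\mathcal{AO}_2[m]|t^m=\Bigl(\sum_{r\ge0}|\OS'[r]|t^r\Bigr)\Bigl(\prod_{i\ge1}(1+t^{i\z_1})\Bigr),$$
using Euler's identity $\sum_{k\ge0}|\OS[k]|t^{k\z_1}=\prod_{i\ge1}(1+t^{i\z_1})$. Now $\sum_{r\ge0}|\OS[r]|t^r=\prod_{i\ge1}(1+t^i)$ and, straight from \eqref{Def: S_{U}}, $\sum_{r\ge0}|\OS_{(\U)}[r]|t^r=\prod_{i\ge1,\,\U\nmid i}(1+t^i)=\prod_{i\ge1}(1+t^i)\big/\prod_{j\ge1}(1+t^{j\U})$. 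Plugging in the values of $(\z_1,\U)$ from \eqref{Table: W^1 W^2} closes each case. For $B^{(1)}_n$ with $\Lambda\ne\Lambda_n$ ($\z_1=\U=2n$) and for $D^{(1)}_n$ ($\z_1=n-1$) one has $\OS'=\OS$, giving $\prod_{i\ge1}(1+t^i)\prod_{j\ge1}(1+t^{j\z_1})=\prod_{i\ge1}(1+t^i)^{\kappa_i}$ with $\kappa_i=2$ iff $\z_1\mid i$. For $B^{(1)}_n$, $\Lambda=\Lambda_n$ ($\z_1=n$, $\U=2n=2\z_1$) the product is $\prod_{i\ge1}(1+t^i)\cdot\prod_{j\ge1}(1+t^{nj})\big/\prod_{j\ge1}(1+t^{2nj})$, and splitting $\prod_{j\ge1}(1+t^{nj})=\prod_{j\ge1}(1+t^{2nj})\cdot\prod_{j\ge1,\,2\nmid j}(1+t^{nj})$ cancels the denominator and leaves $\prod_{i\ge1}(1+t^i)\cdot\prod_{j\ge1,\,2\nmid j}(1+t^{nj})=\prod_{i\ge1}(1+t^i)^{\kappa_i}$ with $\kappa_i=2$ iff $i\equiv n\ {\rm mod}\ 2n$. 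These agree with the table in the statement.

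\textbf{Part (2).} Here I would start from \eqref{eqn:Fock decomposition}, that is $|\mathtt{Z}(\Lambda)[m]|=\sum_{l\ge0,\,m-l\Delta\ge0}|\mathtt{Y}(\Lambda)[m-l\Delta]|\times|\OP[l]|$, use $\Delta=\U$ (as in the proof of Corollary~\ref{Cor: generating function}, see \eqref{eqa:Delta}), replace $|\mathtt{Z}(\Lambda)[m]|$ by $|\mathcal{Z}[m]|$ and each $|\mathtt{Y}(\Lambda)[m-l\U]|$ by $|\mathcal{AO}_1[m-l\U]|$ via the grade-preserving identifications, and finally substitute $|\mathcal{AO}_1[m']|=|\mathcal{AO}_2[m']|=\sum_{k\ge0,\,m'-k\z_1\ge0}|\OS'[m'-k\z_1]|\times|\OS[k]|$ (Theorem~\ref{Thm: Andrews-Olsson identity} together with Proposition~\ref{Prop: C}) with $m'=m-l\U$. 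This yields precisely the asserted double sum.

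\textbf{Main point.} I do not expect a genuine obstacle: once the bijections of Theorem~\ref{Thm: Andrews-Olsson identity} and Proposition~\ref{Prop: C} and the decomposition \eqref{eqn:Fock decomposition} are in hand, (1) and (2) are bookkeeping. The only places demanding care are the product cancellation in the $B^{(1)}_n$, $\Lambda=\Lambda_n$ case above and keeping the summation ranges and degree conventions consistent when composing \eqref{eqn:Fock decomposition}, Proposition~\ref{Prop: C} and $\mathsf{P}^t$ --- in particular verifying that $\mathsf{P}^t$ is grade-preserving, which is what licenses passing from $|\mathtt{Y}(\Lambda)[m']|$ to $|\mathcal{AO}_1[m']|$ and then to the decomposition of Proposition~\ref{Prop: C}.
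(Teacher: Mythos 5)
Your proposal is correct and follows essentially the same route the paper intends: the paper's one-line proof ("by the similar argument in Corollary \ref{Cor: generating function} \dots from Proposition \ref{Prop: C}") is exactly the combination of \eqref{eqn:ch in YW}, Theorem \ref{Thm: Andrews-Olsson identity}, Proposition \ref{Prop: C} and \eqref{eqn:Fock decomposition} (with $\Delta=\U$) that you spell out, and your product manipulations, including the cancellation in the $B^{(1)}_n$, $\Lambda=\Lambda_n$ case, check out against Table \eqref{Table: W^1 W^2}.
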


Since we consider only the parts which can be repeated in $\mathcal{Z}$, we
have an identity which is more general than the one in Theorem \ref{Thm:
Andrews-Olsson identity} as follows:

\medskip

\begin{corollary}\label{Cor: more general AO id}
Let $X_{\z_3}=\{ x_1,\ldots,x_r \}$ be an arbitrary subset of $\{ 1, 2, \ldots, \z_3-1 \} $ such that
$$ 1 \le x_1 <  \cdots < x_r < \z_3.$$
For a given $X_{ \z_3}$, we denote by $\mathcal{AO}^{X_{ \z_3}}_i$ the subset of $\mathcal{AO}_i$
$(i=1,2)$ satisfying the following condition:
\begin{center}
$\lambda \in \mathcal{AO}^{X_{ \z_3}}_i$ if and only if each part of $\lambda$ is congruent to $x_j$ modulo $\z_3$ for some $1 \le j \le r$.
\end{center}
Then we have
$$|\mathcal{AO}^{X_{ \z_3}}_1[m]|=|\mathcal{AO}^{X_{ \z_3}}_2[m]| \quad \text{ for all } m \in \Z_{\ge 0}.$$
\end{corollary}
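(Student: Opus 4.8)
The plan is to show that the bijections built in the course of proving Theorem~\ref{Thm: Andrews-Olsson identity} are ``transparent'' to the residues of the parts modulo $\z_3$, and then to rerun that proof's induction inside the refined family. Write $\mathcal{Z}^{X_{\z_3}}$ for the set of two-colored partitions in $\mathcal{Z}$ all of whose parts lie in the classes $x_1,\ldots,x_r$ modulo $\z_3$, so that $\mathcal{AO}^{X_{\z_3}}_i=\mathcal{AO}_i\cap\mathcal{Z}^{X_{\z_3}}$ and $\mathcal{AO}^c_i\cap\mathcal{Z}^{X_{\z_3}}=\mathcal{Z}^{X_{\z_3}}\setminus\mathcal{AO}^{X_{\z_3}}_i$. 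The first thing to record is the arithmetic: inspecting \eqref{Table: W^1 W^2}, in every case $\z_3$ divides $\U$ and $\epsilon^{-1}\U$ (the quotients lying in $\{1,2\}$) and $\z_1,\z_2$ are multiples of $\z_3$; consequently the part-values of $\mathcal{Z}$ permitted to repeat, namely those in $\{\,k\z_1+\z_2,\ k\overline{\z_1}+\z_2,\ \z_1,\ \overline{\z_1},\ k\z_3\,\}$, are exactly the parts $\equiv0\pmod{\z_3}$, and these are disjoint from $X_{\z_3}$.

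The heart of the matter — and the step I would carry out with care — is that every elementary move of algorithm $(\mathbf A)$, and of the inverse of $\overline{\Psi}[m]$, changes each part only by an integer multiple of $\U$, hence (since $\z_3\mid\U$) preserves the residue modulo $\z_3$ and the color $c$ of every part; and every move of algorithm $(\mathbf B)$, and of the inverse of $\overline{\Phi}[m]$, only deletes or inserts parts equal, up to color, to multiples of $\epsilon^{-1}\U$, all $\equiv0\pmod{\z_3}$, so the sub-multiset of parts lying in the classes $x_j$ is unchanged. Granting this, $\overline{\Psi}[m]$ restricts to a bijection
$$\bigl(\mathcal{Z}^{X_{\z_3}}\setminus\mathcal{AO}^{X_{\z_3}}_1\bigr)[m]\ \xrightarrow{\ \sim\ }\ \bigsqcup_{k>0,\ m-k\U\ge0}\bigl(\mathcal{AO}^{X_{\z_3}}_1[m-k\U]\times\OP[k]\bigr),$$
and $\overline{\Phi}[m]$ restricts in the same way on the $\mathcal{AO}_2$ side; moreover the partitions on which $\mathcal{AO}_1[\U]$ and $\mathcal{AO}_2[\U]$ differ in the proof of Theorem~\ref{Thm: Andrews-Olsson identity} (for type $D^{(2)}_{n+1}$, namely $(\V,\V,0,\ldots)$ and $(\U,0,\ldots)$) have a part divisible by $\z_3$, hence lie outside $\mathcal{Z}^{X_{\z_3}}$, so that $|\mathcal{AO}^{X_{\z_3}}_1[t]|=|\mathcal{AO}^{X_{\z_3}}_2[t]|$ holds for $0\le t\le\U$ with no exception.

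Finally I would induct on $m$ exactly as in Theorem~\ref{Thm: Andrews-Olsson identity}: from $|\mathcal{Z}^{X_{\z_3}}[m]|=|\mathcal{AO}^{X_{\z_3}}_i[m]|+|(\mathcal{Z}^{X_{\z_3}}\setminus\mathcal{AO}^{X_{\z_3}}_i)[m]|$ and the restricted bijections just described, the complement term equals $\sum_{k>0,\ m-k\U\ge0}|\mathcal{AO}^{X_{\z_3}}_i[m-k\U]|\cdot|\OP[k]|$, the same functional of $\{\,|\mathcal{AO}^{X_{\z_3}}_i[l]|:l<m\,\}$ for $i=1,2$; the inductive hypothesis forces these to agree, whence $|\mathcal{AO}^{X_{\z_3}}_1[m]|=|\mathcal{AO}^{X_{\z_3}}_2[m]|$. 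The main obstacle is precisely the transparency claim of the second paragraph: one must inspect each step of $(\mathbf A)$ and $(\mathbf B)$, together with their side conditions (e.g.\ ``$(y^{(l)}_{i-1}-t\cdot\U,\ y^{(l)}_i)\in\mathcal{Z}$'') and termination criteria, and verify that once the input has all parts in the classes $x_j$ no step can fail or create a part outside those classes, so that the restricted maps really are well-defined bijections onto the stated sets; this reduces to the divisibility facts above plus a row-by-row check of \eqref{Table: W^1 W^2} and \eqref{Table: AO_2}.
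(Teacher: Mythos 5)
Your overall strategy --- rerunning the induction from Theorem \ref{Thm: Andrews-Olsson identity} inside the refined family after checking that $\overline{\Psi}[m]$ and $\overline{\Phi}[m]$ are blind to the residues of the parts modulo $\z_3$ --- is exactly what the paper intends (the paper offers no proof beyond the remark that the algorithms only manipulate the repeatable parts), and your divisibility bookkeeping ($\z_3\mid\U$, $\z_3\mid\epsilon^{-1}\U$, $\z_3\mid\z_1,\z_2$) is correct. However, the ``transparency'' step that you yourself flag as the main obstacle genuinely fails under the reading of $\mathcal{AO}^{X_{\z_3}}_i$ you adopt, namely that \emph{every} part must lie in one of the classes $x_1,\ldots,x_r$, which are disjoint from the class $0$. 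The forward algorithms are indeed harmless (algorithm $(\mathbf A)$ only subtracts multiples of $\U$), but the \emph{inverse} maps are not: the inverse of $\overline{\Psi}[m]$ applied to a pair $(Y',\lambda')$ with $\ell(\lambda')$ exceeding the number of nonzero parts of $Y'$ creates new parts equal to $\lambda'_i\cdot\U\equiv 0\pmod{\z_3}$, and the inverse of $\overline{\Phi}[m]$ always inserts parts $\lambda'_j\cdot\epsilon^{-1}\U\equiv 0\pmod{\z_3}$; your own observation that these moves ``only insert parts that are multiples of $\epsilon^{-1}\U$'' is precisely why the preimage leaves $\mathcal{Z}^{X_{\z_3}}$. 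So the restricted maps are injective but not surjective onto the stated codomains. The failure is starkest on the $\mathcal{AO}_2$ side: since no element of $\mathcal{Z}^{X_{\z_3}}$ has a part divisible by $\U$ or a repeated part, one has $\mathcal{AO}^{X_{\z_3}}_2=\mathcal{Z}^{X_{\z_3}}$, so the domain $\bigl(\mathcal{Z}^{X_{\z_3}}\setminus\mathcal{AO}^{X_{\z_3}}_2\bigr)[m]$ of your restricted $\overline{\Phi}[m]$ is empty while the target $\bigsqcup_{k>0}\bigl(\mathcal{AO}^{X_{\z_3}}_2[m-k\U]\times\OP[k]\bigr)$ is not.

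In fact the identity is false under that literal reading: take $\g_n=A^{(2)}_2$, so $\z_3=\U=3$, with $X_{\z_3}=\{1\}$ and $m=4$. Then $\mathcal{AO}_1[4]=\{(3,1)\}$ and $\mathcal{AO}_2[4]=\{(4)\}$, so $\mathcal{AO}^{X_{\z_3}}_1[4]=\emptyset$ (the part $3$ is $\equiv 0$) while $\mathcal{AO}^{X_{\z_3}}_2[4]=\{(4)\}$. The corollary is only correct if, as in Definition \ref{def: AOXN1} for the classical Andrews--Olsson sets, the residue class $0$ modulo $\z_3$ remains permitted in $\mathcal{AO}^{X_{\z_3}}_1$ --- equivalently, the congruence restriction is imposed only on the non-repeatable parts --- so that $(3,1)$ survives and both sides equal $1$. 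Once you amend the definition of $\mathcal{Z}^{X_{\z_3}}$ accordingly (allow parts $\equiv 0\pmod{\z_3}$ throughout), the parts created by the inverse maps are again admissible, your residue-preservation argument does establish both restricted bijections, and the induction closes exactly as you describe. The repair is local, but as written the key step fails.
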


\vskip 1em

\section{ Generalization of Bessenrodt's algorithm} \label{Sec: Bessenrodt's refinement}

In the previous section, we proved that $|\mathcal{AO}^{X_{ \z_3}}_1[m]|$
coincides with $|\mathcal{AO}^{X_{ \z_3}}_2[m]|$. In this section, we will
construct an explicit bijection between these sets by generalizing Bessenrodt's insertion algorithm
\cite{B91,B94,WHKM10}. Recall the fact that the map $\mathsf{P}$ is
injective in case of types $D^{(2)}_{n+1}$ and $A^{(2)}_{2n}$. Thus
we can use the results in \cite{B91,B94,WHKM10} efficiently for
types $D^{(2)}_{n+1}$ and $A^{(2)}_{2n}$.

\vskip 1em

This section is devoted to prove the following theorem:

\begin{theorem} \label{Thm: Bessenrodt's refinement}
For all $m \in \N$, there is a bijection
$$\Theta: \mathcal{AO}_1[m] \to \mathcal{AO}_2[m].$$
\end{theorem}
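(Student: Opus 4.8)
The plan is to realize $\Theta$ as a two-colored refinement of Bessenrodt's insertion bijection, handling first the types for which Young walls are ordinary partitions and then extending to the genuinely two-colored types.

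\emph{Partition-like types.} For $\g_n=D^{(2)}_{n+1}$ and $\g_n=A^{(2)}_{2n}$ the map $\mathsf{P}$ of Remark \ref{Rmk: AO2} is injective, so $\mathcal{Z}$ is a set of honest partitions and, again by Remark \ref{Rmk: AO2}, $\mathcal{AO}_2$ is $\OS$ (resp. $\OS_{(\U)}$) while $\mathcal{AO}_1$ is an Andrews--Olsson set cut out by difference conditions. Here I would simply invoke Bessenrodt's insertion algorithm from \cite{B91} (and the residue-tracking variant of \cite{WHKM10} needed for the $X_{\z_3}$-subsets of Corollary \ref{Cor: more general AO id}). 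The only data not literally covered is the exceptional pair $(\V,\V,0,\dots)\leftrightarrow(\U,0,0,\dots)$ at size $\U$ in type $D^{(2)}_{n+1}$ recorded in the proof of Theorem \ref{Thm: Andrews-Olsson identity}, which is matched by hand. Since every carry in the algorithm adds $\U$ and $\U\equiv 0\pmod{\z_3}$ in all our cases, the resulting $\Theta$ automatically restricts to $\mathcal{AO}_1^{X_{\z_3}}[m]\to\mathcal{AO}_2^{X_{\z_3}}[m]$.

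\emph{Two-colored types.} For $\g_n=A^{(2)}_{2n-1},B^{(1)}_n,D^{(1)}_n$ parts may be overlined (a half-thickness block on the front of a full column), so I would lift the insertion to $\OP^t$ using the arithmetic of $\mathsf{N}$ fixed in Section \ref{Sec: Young wall Partition} (the operations $a\pm b$, the $\Z_{\ge0}$-multiplication $\cdot$, and the parity map $c$). For $Y\in\mathcal{AO}_1$ and an admissible value $b\in\mathsf{N}$ that is not a multiple of $\U$, define a \emph{carry-insertion} $b\Rightarrow Y\in\mathcal{AO}_1$: place $b$ in its $\succ$-slot, using the left-insertion rule $(e)^j\hookrightarrow Y$ defined above to respect the colours of equal parts; then, if the inequalities defining $\mathcal{AO}_1$ (successive differences $\preceq\U$, smallest part $\prec\U$, and strict inequality near multiples of $\V$) fail just above $b$, repair them by adding copies of $\U$ to the parts lying above $b$ and letting the carry propagate upward. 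Adding $\U$ preserves colours since $c(\U)=0$, preserves $\Sigma_Y$, and preserves residues mod $\z_3$. Then $\Theta^{-1}\colon\mathcal{AO}_2[m]\to\mathcal{AO}_1[m]$ is defined by inserting the parts of $Y=(y_1,\dots,y_\ell)\in\mathcal{AO}_2[m]$ one at a time (in increasing order) into the empty partition; one checks the output lies in $\mathcal{AO}_1[m]$ and has size $m$. The type-dependent features are absorbed here: the classes allowed to repeat in $\mathcal{AO}_2$ for $B^{(1)}_n$ with $\Lambda\ne\Lambda_n$ and the strictness conditions for $A^{(2)}_{2n-1}$ are exactly what forces, respectively forbids, a carry.

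\emph{Inverting, and the main obstacle.} For the reverse direction I would define an \emph{extraction} map on $\mathcal{AO}_1[m]$: locate the part of $Y$ that must have been inserted last, subtract the matching copies of $\U$ to unwind its carry, and recurse; by design this lands in $\mathcal{AO}_2[m]$ and yields $\Theta$. One then proves $\Theta$ and $\Theta^{-1}$ are mutually inverse by showing that one extraction step exactly undoes one carry-insertion step; the same local analysis gives the restriction $\Theta\colon\mathcal{AO}_1^{X_{\z_3}}[m]\to\mathcal{AO}_2^{X_{\z_3}}[m]$ of Corollary \ref{Cor: more general AO id}. This last verification is the main obstacle: one must check, uniformly in the type and the level-$1$ weight $\Lambda$, that carrying always restores membership in $\mathcal{AO}_1$ without creating a new violation higher up and that extraction is its precise inverse — the delicate part of Bessenrodt's original argument, now carrying the extra colour parameter and the extra repeatable classes. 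As a fallback, if this becomes unwieldy, $\Theta$ can be produced algorithmically (though less transparently) by feeding the explicit bijections $\overline{\Psi}[m]$ of algorithm $({\mathbf A})$ and $\overline{\Phi}[m]$ of Proposition \ref{Prop: Algorithm B}, together with the inductive hypothesis $\Theta\colon\mathcal{AO}_1[m']\to\mathcal{AO}_2[m']$ for $m'<m$ and the base case $|\mathcal{AO}_1[t]|=|\mathcal{AO}_2[t]|$ for $t\le\U$ from the proof of Theorem \ref{Thm: Andrews-Olsson identity}, into the Garsia--Milne involution principle; the insertion description is nevertheless preferable since it is uniform and algorithmic.
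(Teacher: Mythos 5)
Your treatment of the partition-like types ($D^{(2)}_{n+1}$ and $A^{(2)}_{2n}$) matches the paper: it also reduces to the injectivity of $\mathsf{P}$, quotes Bessenrodt's bijection for $A^{(2)}_{2n}$, and for $D^{(2)}_{n+1}$ composes the bijection of \cite{WHKM10} with Sylvester's bijection. The problem is with the two-colored types, where your proposal both diverges from the paper's mechanism and leaves its own central step unproven. The paper does not insert the parts of $Y\in\mathcal{AO}_2$ one at a time with upward carries; instead it fibers \emph{both} sides over a common core: algorithm $({\mathbf D}')$ writes $\mathcal{AO}_{2^s}[m]$ as pairs $(Y',\lambda)$ with $Y'\in\mathcal{AO}_{1\cap 2^s}$ and $\ell(\lambda)\le\ell(Y')$, algorithms $({\mathbf E})/({\mathbf F})$ (removal and insertion of columns and $\mathsf{L}$-hooks in the $\U$-modular diagram) write $\mathcal{AO}_{1^o}[m]$ as pairs $(Y',\lambda)$ with $\lambda_1\le\ell(Y')$, and the two parametrizations are matched by transposing $\lambda$. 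Your "carry-insertion'' is a different algorithm, and you explicitly defer its well-definedness and invertibility as "the main obstacle.'' That deferred verification \emph{is} the theorem for these types; without it the proposal is a plan, not a proof.

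A second, more specific gap: you never address the color bookkeeping for repeated multiples of $\U$. In $\mathcal{AO}_1$ the $k\cdot\U$-components carry one of four alternating overline patterns (cases (i)--(iv)), while in $\mathcal{AO}_2$ the repeatable $\U$-multiples are constrained differently (cases (a)--(b)); the whole point of the recodings $\mathsf{P}^o:\mathcal{AO}_1\to\mathcal{AO}_{1^o}$ and $\mathsf{P}^s:\mathcal{AO}_2\to\mathcal{AO}_{2^s}$ is to normalize these patterns so that adding or removing copies of $\U$ is unambiguous. Observing that $c(\U)=0$ so that a carry "preserves colours'' does not resolve which of the admissible patterns the carried block must assume, nor why the result stays in $\mathcal{Z}(\Lambda)=\OS^t\{(\z_1,\z_2);\z_3\}$. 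Finally, note that the fallback via Garsia--Milne, while it would produce \emph{some} bijection, proves nothing beyond Theorem \ref{Thm: Andrews-Olsson identity} (finite sets of equal cardinality always admit a bijection); the content of Theorem \ref{Thm: Bessenrodt's refinement} is the explicit insertion algorithm, which your proposal does not supply for the types $A^{(2)}_{2n-1}$, $B^{(1)}_n$, $D^{(1)}_n$.
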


\subsection{Types of $D^{(2)}_{n+1}$ and $A^{(2)}_{2n}$.}

\subsubsection{$A^{(2)}_{2n}$.}

For $N \in \N$, let $X_{N}=\{ x_1,x_2,...,x_r \}$ be an arbitrary subset of $\{ 1,2,\cdots,N-1 \}$ such that
$$1 \le x_1 < x_2 < \cdots < x_r < N.$$

\begin{definition} \label{def: AOXN1}
Let $\mathcal{AO}_1^{X_{N}}[m]$ denote the subset of partitions of $m$ satisfying the following
conditions:
\begin{enumerate}
\item Each part is congruent to $0$ or some $x_i$ modulo $N$.
\item Only the multiples of $N$ can be repeated and the smallest part is strictly less than $N$.
\item The difference between two successive parts is at most $N$ and strictly less than $N$ if either part is
divisible by $N$.
\end{enumerate}
\end{definition}

\begin{definition} \label{def: AOXN2}
Let $\mathcal{AO}_2^{X_{N}}[m]$ denote the subset of partitions of $m$ satisfying the following
conditions:
\begin{enumerate}
\item Each part is congruent to some $x_i$ modulo $N$.
\item No part can be repeated.
\end{enumerate}
\end{definition}

\begin{theorem} \cite{AO91,B91} \label{Thm: Bess origin}
Let $m \in \Z_{\ge 0}$.
\begin{enumerate}
\item $|\mathcal{AO}_1^{X_{N}}[m]|=|\mathcal{AO}_2^{X_{N}}[m]|.$
\item There is an insertion algorithm which establishes an explicit bijection
\begin{equation} \label{eq: B insert}
\Theta: \mathcal{AO}_1^{X_{N}}[m] \to \mathcal{AO}_2^{X_{N}}[m].
\end{equation}
\end{enumerate}
\end{theorem}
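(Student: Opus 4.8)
The plan is to prove part (1) by a generating‑function computation and part (2) by recalling Bessenrodt's insertion algorithm from \cite{B91} and verifying its properties; since the map \eqref{eq: B insert} is size‑preserving, (2) subsumes (1), but the generating function pins down the common value. \textbf{Part (1).} By Definition \ref{def: AOXN2}, $\mathcal{AO}_2^{X_{N}}$ is the set of partitions into \emph{distinct} parts, each congruent to one of $x_1,\dots,x_r$ modulo $N$, so
\[
\sum_{m\ge 0}|\mathcal{AO}_2^{X_{N}}[m]|\,t^m \;=\; \prod_{\substack{i\ge 1\\ i\bmod N\,\in\,\{x_1,\dots,x_r\}}}\bigl(1+t^i\bigr).
\]
It remains to show the generating function of $\mathcal{AO}_1^{X_{N}}$ is the same product, which is exactly the Andrews--Olsson identity of \cite{AO91}. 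I would prove it by the recursive peeling already used in this paper: the analogues of the maps $\overline{\Psi}$ and $\overline{\Phi}$ of Section \ref{Sec: Anderws-Olsson identity}, obtained by replacing $\U$ with $N$ and suppressing all overlines, exhibit both $|\mathcal{AO}_1^{X_{N}}[m]|$ and $|\mathcal{AO}_2^{X_{N}}[m]|$ as solutions of the single recursion $|\mathcal{AO}_i^{c}[m]| = \sum_{k>0}|\mathcal{AO}_i^{X_N}[m-kN]|\cdot|\OP[k]|$ inside a common universe $|\mathcal{AO}_i^{X_N}[m]|+|\mathcal{AO}_i^{c}[m]|$; since the two sides agree for $m\le N$, induction on $m$ — carried out exactly as in the proof of Theorem \ref{Thm: Andrews-Olsson identity} — gives $|\mathcal{AO}_1^{X_{N}}[m]| = |\mathcal{AO}_2^{X_{N}}[m]|$.

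\textbf{Part (2): the insertion map.} Given $\mu=(\mu_1>\mu_2>\dots>\mu_s)\in\mathcal{AO}_2^{X_{N}}[m]$, I would build $\lambda\in\mathcal{AO}_1^{X_{N}}[m]$ by inserting the parts in increasing order $\mu_s,\mu_{s-1},\dots,\mu_1$, maintaining the invariant that after each insertion the current partition satisfies the conditions of Definition \ref{def: AOXN1}. To insert the next part $a=\mu_j$ (larger than every current part) into the current partition $\nu=(\nu_1\ge\nu_2\ge\dots)$: scan for the largest index $i$ at which placing $a$ at the top would be \emph{blocked}, i.e.\ where prepending $a$ would violate the bounded‑gap condition \ref{def: AOXN1}(3) or create an illegal repeat \ref{def: AOXN1}(2). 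If no such $i$ exists, simply prepend $a$. Otherwise add $N$ to each of $\nu_1,\dots,\nu_i$ — a ``carry'' — which may in turn produce a fresh block one index further down; repeat until no block remains, then prepend the resulting top value. The output is a partition of the same integer $m$, with all parts still in the residue classes $\{x_1,\dots,x_r\}$ modulo $N$ or multiples of $N$, which one checks lies in $\mathcal{AO}_1^{X_{N}}$.

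\textbf{The inverse and the main obstacle.} For $\Theta$ itself (the direction $\mathcal{AO}_1^{X_{N}}\to\mathcal{AO}_2^{X_{N}}$) I would run the reverse ``extraction'' procedure: repeatedly remove the top part of $\lambda$, then ``un‑carry'' by subtracting $N$ from a maximal legal initial run of parts — this step is structurally identical to the steps $(\mathbf{A}2)$--$(\mathbf{A}3)$ of algorithm $(\mathbf{A})$ in Section \ref{Sec: Anderws-Olsson identity} (choose the maximal index, then the maximal carry amount keeping the result in $\mathcal{Z}$) — recording each extracted value as the next part of a strict partition in the residues $x_i$. One then checks that the block position $i$ and the carry amount chosen by the insertion step are precisely those recovered by the extraction step, so the two maps are two‑sided inverses. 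The main obstacle is the verification underlying both directions: that the insertion output always satisfies all of Definition \ref{def: AOXN1}(1)--(3) — in particular that the cascade of carries keeps successive differences $\le N$ and creates no repeats except among multiples of $N$ — and that the forward and backward carry amounts agree. This needs a case analysis according to whether the blocking part is a multiple of $N$ or lies in a residue $x_i$, and whether the block occurs at the very top or deeper in the partition; termination of the carry cascade (it moves the block strictly toward larger parts, of which there are finitely many above the inserted value), size‑preservation, and the base case $m\le N$ are all routine.
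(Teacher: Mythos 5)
This statement is not proved in the paper at all: it is imported verbatim from \cite{AO91} (part (1)) and \cite{B91} (part (2)), and the author explicitly declines even to sketch the insertion algorithm \eqref{eq: B insert}, since Section \ref{Sec: Bessenrodt's refinement} generalizes it rather than re-proves it. So your proposal has to be judged on its own merits. Your part (1) is essentially sound and, pleasingly, is the same mechanism the paper itself uses for its generalization (Theorem \ref{Thm: Andrews-Olsson identity} via the peeling algorithms $(\mathbf A)$ and $(\mathbf B)$): both $\mathcal{AO}_1^{X_N}$ and $\mathcal{AO}_2^{X_N}$ sit inside a common ambient set, the complements decompose as $\bigsqcup_{k>0}\mathcal{AO}_i^{X_N}[m-kN]\times\OP[k]$, and induction on $m$ finishes it. The one thing you leave implicit and must supply is \emph{which} common universe: it has to be the set of partitions whose parts are $\equiv 0$ or some $x_j$ modulo $N$ and in which only multiples of $N$ repeat (the analogue of $\mathcal{Z}$); without naming it, the equality $|\mathcal{AO}_1^{X_N}[m]|+|\mathcal{AO}_1^{c}[m]|=|\mathcal{AO}_2^{X_N}[m]|+|\mathcal{AO}_2^{c}[m]|$ that drives the induction has no content.

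Part (2) has a genuine gap. As literally written, your insertion step does not preserve the size of the partition: you add $N$ to each of $\nu_1,\dots,\nu_i$ and then still ``prepend'' the part $a$, which increases the total by $iN$. Bessenrodt's actual insertion places the \emph{reduced} part $a-iN$ \emph{below} the incremented initial segment $(\nu_1+N,\dots,\nu_i+N)$, so that the sum $\sum_j\nu_j+iN+(a-iN)=\sum_j\nu_j+a$ is conserved; the new entry is generally not the largest part of the result. Your ``prepend the resulting top value'' conflates these two things, and with the placement wrong, the subsequent claims (that the gap conditions of Definition \ref{def: AOXN1}(3) are restored, that the cascade terminates by moving ``toward larger parts'', and that the extraction step recovers the same $i$) do not go through as stated. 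Moreover, you yourself flag the well-definedness of the output and the two-sided-inverse property as ``the main obstacle'' and defer them — but for a theorem whose entire content is that a specific combinatorial map is a bijection, that verification \emph{is} the proof. So the proposal establishes (1) up to a fixable omission, but does not establish (2); to repair it you would need to state the insertion rule precisely (position $i$, new part $a-iN$, incremented prefix) and carry out the case analysis you describe, or simply cite \cite{B91} as the paper does.
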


In later subsections, we will generalize the insertion
algorithm \eqref{eq: B insert} given by Bessenrodt in \cite{B91}. Thus we do not give an
overview of \eqref{eq: B insert}. \vskip 1em \noindent \textbf{Proof of
Theorem \ref{Thm: Bessenrodt's refinement} } ($A^{(2)}_{2n}$) Note
that $\U=2n+1 \in \N$. If we choose $X_{\U}=\{ 1,2,3,\ldots, \U-1 \}$, one
can easily check that
$$  \text{ $\mathcal{AO}_i[m]$ coincides with $\mathcal{AO}_i^{X_{\U}}[m]$} \quad (i=1,2). $$
Thus we have a bijection between $\mathcal{AO}_1$ and $\mathcal{AO}_2$. Moreover, the bijection in
Theorem \ref{Thm: Bess origin} is a weight-preserving map \cite{B91}.

\subsubsection{$D^{(2)}_{n+1}$.} For $N \in \N$, let $X_{2N}=\{ x_1,x_2,...,x_r \}$ be an arbitrary subset of $\{ 1,2,\cdots,2N-1 \}$ such that
$$1 \le x_1 < x_2 < \cdots < x_r < 2N.$$

\begin{definition} Let $\mathcal{AO}_3^{X_{2N}}[m]$ denote the subset of partitions of $m$ satisfying the following
conditions:
\begin{enumerate}
\item Each part is congruent to $0$ or some $x_i$ modulo $2N$.
\item Only the multiples of $N$ can be repeated and the smallest part is less than $2N$.
\item The difference between two successive parts is at most $2N$ and strictly less than $2N$ if either part is
divisible by $N$.
\end{enumerate}
\end{definition}

\begin{definition} Let $\mathcal{AO}_4^{X_{2N}}[m]$ denote the subset of partitions of $m$ satisfying the following
conditions:
\begin{enumerate}
\item Each part is congruent to some $x_i$ modulo $2N$.
\item Only the multiples of $N$ can be repeated.
\end{enumerate}
\end{definition}

\begin{theorem} \label{Thm: AO3 to AO4} \cite{WHKM10}
For any $m \in \Z_{\ge 0}$, there is an insertion algorithm which establishes an explicit bijection
$$\Theta':\mathcal{AO}_3^{X_{2N}}[m] \leftrightarrow \mathcal{AO}_4^{X_{2N}}[m].$$
\end{theorem}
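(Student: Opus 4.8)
This is \cite{WHKM10}; I describe how I would obtain it, by generalizing the Bessenrodt insertion algorithm that underlies Theorem~\ref{Thm: Bess origin}. First I would dispose of the case $N\notin X_{2N}$: then no part of a partition in $\mathcal{AO}_3^{X_{2N}}$ is congruent to $N$ modulo $2N$, so ``multiple of $N$'' and ``multiple of $2N$'' coincide for such parts, and comparing the definitions gives $\mathcal{AO}_3^{X_{2N}}[m]=\mathcal{AO}_1^{X_{2N}}[m]$ and $\mathcal{AO}_4^{X_{2N}}[m]=\mathcal{AO}_2^{X_{2N}}[m]$; here $\Theta'$ is just the bijection of Theorem~\ref{Thm: Bess origin} with the modulus taken to be $2N$. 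So the content is the case $N\in X_{2N}$, where the residue class $N$ modulo $2N$ is also allowed and repeated multiples of $N$ are permitted in $\mathcal{AO}_4^{X_{2N}}$ as well as in $\mathcal{AO}_3^{X_{2N}}$.

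The plan for this case is to define $\Theta'$ as an insertion procedure acting on the parts of $\mu\in\mathcal{AO}_3^{X_{2N}}[m]$ that are divisible by $N$. Starting from the sub-partition of $\mu$ consisting of its parts \emph{not} divisible by $N$ (which is strict, since in $\mathcal{AO}_3^{X_{2N}}$ only multiples of $N$ may repeat), one re-inserts the values of the multiples of $N$ one at a time; to insert a value $s$ into the current partition $\pi$ one places it in the slot forced by the gap conditions defining $\mathcal{AO}_3^{X_{2N}}$, and when $s$ meets an existing part one performs a carry, exactly as in \cite{B91}. The one genuinely new ingredient is the carry rule when $s$ and the part it meets are both multiples of $N$: there one is allowed to leave a \emph{repeated} multiple of $N$ in place of the strict gap that Bessenrodt's original rule would force, which is legitimate precisely because $\mathcal{AO}_4^{X_{2N}}$ tolerates repeated multiples of $N$. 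After all multiples of $N$ have been re-inserted one arrives at $\Theta'(\mu)$.

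It then remains to check the two usual things, step by step along the insertion: that the weight is preserved and that every intermediate partition (hence $\Theta'(\mu)$) lies in $\mathcal{AO}_4^{X_{2N}}$---each part staying in an allowed residue class modulo $2N$, and the only repetitions created being among multiples of $N$---and that every single insertion step is reversible, so that the inverse of $\Theta'$ is obtained by undoing the insertions in reverse order. I expect the essential difficulty to be the reversibility once the relaxed collision rule is in force: one must show that, given only the output $\Theta'(\mu)$, the history of carries among the multiples of $N$---which occurrences of a repeated multiple of $N$ were produced by an insertion, and how large a carry was performed---is determined unambiguously. Pinning down the invariant that makes this so is the real heart of \cite{WHKM10}; granting it, the bijectivity of $\Theta'$, and with it Theorem~\ref{Thm: Bessenrodt's refinement} for $D^{(2)}_{n+1}$ via the identifications of Section~\ref{Sec: Young wall Partition}, follows formally.
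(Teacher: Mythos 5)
The paper offers no proof of this statement: it is imported verbatim from \cite{WHKM10}, so there is no internal argument to compare yours against. Your reduction of the case $N\notin X_{2N}$ to Theorem~\ref{Thm: Bess origin} with modulus $2N$ is correct (when $N\notin X_{2N}$ every admissible part that is a multiple of $N$ is automatically a multiple of $2N$, so the two pairs of definitions coincide), and your outline of the modified insertion for $N\in X_{2N}$ matches the strategy of the cited work; but, as you yourself flag, the invariant that makes the relaxed collision rule for repeated multiples of $N$ reversible is exactly the content of \cite{WHKM10} and is not supplied here, so what you have is an accurate description of where the proof lives rather than a self-contained proof --- which puts you on the same footing as the paper itself.
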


\noindent
\textbf{Proof of Theorem \ref{Thm: Bessenrodt's refinement} } ($D^{(2)}_{n+1}$)
 Note that $\U=2n+2=2(n+1)=2\V$. If we choose $X_{\U}=\{ 1,2,\ldots,\U-1 \} $, one can
easily check that
$$ \mathcal{AO}_1[m] =  \mathcal{AO}_3^{X_{\U}}[m], \quad \text{ for all } m \in \Z_{\ge 0}.$$
Thus it suffices to show that there is a bijection between
$\mathcal{AO}_4^{X_{\U}}[m]$ and $\mathcal{AO}_2[m]$. This
bijection can be constructed by using Sylvester's bijection
\cite{M84} between $\mathscr{O}$ and $\OS$.

We can extract an odd partition $\lambda$ from $\mu \in
\mathcal{AO}_4^{X_{\U}}[m]$ by the following algorithm $({\mathbf
D})$:
\begin{itemize}
\item[(${\mathbf D}$1)] Let $\mu \in \mathcal{AO}_4^{X_{\U}}[m]$. Set $\mu^{(0)}=\mu$ and $l=0$.
\item[(${\mathbf D}$2)] Find the maximal $i$ such that
$ \mu^{(l)}_i = \lambda_{l+1} \V \text{ for some } \lambda_{l+1} \in 2\Z_{\ge 0}+1$, and
set $$\mu^{(l+1)}=(\mu^{(l)}_1,\mu^{(l)}_2,\ldots,\mu^{(l)}_{i-1},\mu^{(l)}_{i+1},\ldots).$$
\item[(${\mathbf D}$3)] If there is no $j$ such that
$$ \mu^{(l+1)}_{j}= k \V \text{ for some } k \in 2\Z_{\ge 0}+1,$$
define $\underline{\mu}=\mu^{(l+1)}$ and terminate this algorithm. Otherwise, set $l=l+1$ and go to (${\mathbf D}$2).
\end{itemize}
This algorithm terminates in finitely many steps and
$\lambda:=(\lambda_l,\lambda_{l-1},\ldots,\lambda_1)$ is an odd
partition.

Then by the Sylvester's bijection, we have a strict partition
$\lambda'=(\lambda'_1,\ldots,\lambda'_r)$ and
$$ \mu':=(\lambda'_r \cdot \V) \hookrightarrow (\lambda'_{r-1} \cdot \V) \hookrightarrow
\cdots (\lambda'_1 \cdot \V) \hookrightarrow \underline{\mu} \in \mathcal{AO}_2[m].$$
In conclusion, we can construct a bijection $\mathcal{AO}_1 \to \mathcal{AO}_2$ which preserves weight.

\begin{definition} Let $\mathcal{AO}_5^{X_{2N}}[m]$ denote the set of partitions of $m$ satisfying the following
conditions:
\begin{enumerate}
\item Each part is congruent to some $x_i$ modulo $2N$ and to $0$ modulo $2N$ if $ N  \in X_{2N}$.
\item No part can be repeated.
\end{enumerate}
\end{definition}

\begin{corollary} \label{Thm: AO3,AO4 to 5}
For any $m \in \Z_{\ge 0}$ and  $X_{2N}$, there is an insertion algorithm which establishes an explicit bijection
$$\Theta'':\mathcal{AO}_4^{X_{2N}}[m] \to \mathcal{AO}_5^{X_{2N}}[m],$$
and hence
\begin{enumerate}
\item there exists a bijection $\Theta'' \circ \Theta':\mathcal{AO}_3^{X_{2N}}[m] \to \mathcal{AO}_5^{X_{2N}}[m],$
\item $|\mathcal{AO}_3^{X_{2N}}[m]|=|\mathcal{AO}_4^{X_{2N}}[m]|=|\mathcal{AO}_5^{X_{2N}}[m]|.$
\end{enumerate}
\end{corollary}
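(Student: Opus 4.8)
The plan is to adapt, essentially verbatim, the algorithm $({\mathbf D})$ and the Sylvester-bijection argument already used in the proof of Theorem \ref{Thm: Bessenrodt's refinement} for type $D^{(2)}_{n+1}$, now working with an arbitrary subset $X_{2N}$ rather than the full set $\{1,2,\ldots,\U-1\}$. First I would split into two cases according to whether $N\in X_{2N}$. If $N\notin X_{2N}$, then every part of a partition in $\mathcal{AO}_4^{X_{2N}}[m]$ has residue in $X_{2N}\subseteq\{1,\ldots,2N-1\}$ and hence is not divisible by $N$ (an odd multiple of $N$ has residue $N\notin X_{2N}$, an even one residue $0\notin X_{2N}$); so condition (2) defining $\mathcal{AO}_4^{X_{2N}}$ forces all parts distinct, and the residue condition of $\mathcal{AO}_5^{X_{2N}}$ admits no extra residue. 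Thus $\mathcal{AO}_4^{X_{2N}}[m]=\mathcal{AO}_5^{X_{2N}}[m]$ and $\Theta''$ may be taken to be the identity.

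Now suppose $N\in X_{2N}$. In this case the only parts of $\mu\in\mathcal{AO}_4^{X_{2N}}[m]$ that are allowed to repeat are the odd multiples of $N$ (residue $N$), since even multiples of $N$ (residue $0$) are still excluded. I would therefore run algorithm $({\mathbf D})$ as stated: successively strip off the largest part equal to an odd multiple $\lambda_{l+1}\V$ of $N$, recording $\lambda_{l+1}$, until no multiple of $N$ remains. This produces an odd partition $\lambda$ and a residual partition $\underline{\mu}$ which contains no multiple of $N$ and hence, being a subpartition of $\mu$, has pairwise distinct parts. Applying Sylvester's bijection $\mathscr{O}\to\OS$ gives a strict partition $\lambda'=(\lambda'_1,\ldots,\lambda'_r)$, and I set
$$\Theta''(\mu):=(\lambda'_r\cdot\V)\hookrightarrow\cdots\hookrightarrow(\lambda'_1\cdot\V)\hookrightarrow\underline{\mu}.$$
The verifications are: (i) as $\lambda'$ is strict and $\underline{\mu}$ has no multiple of $N$, the parts $\lambda'_j\V$ are pairwise distinct and distinct from every part of $\underline{\mu}$, so $\Theta''(\mu)$ has no repeated part; (ii) each $\lambda'_j\V$ has residue $N\in X_{2N}$ if $\lambda'_j$ is odd and residue $0$ if $\lambda'_j$ is even, and residue $0$ is permitted in $\mathcal{AO}_5^{X_{2N}}$ exactly because $N\in X_{2N}$ — hence $\Theta''(\mu)\in\mathcal{AO}_5^{X_{2N}}[m]$. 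Bijectivity follows from the evident inverse: from $\nu\in\mathcal{AO}_5^{X_{2N}}[m]$ remove all multiples of $N$, divide them by $\V$ to obtain a strict partition, apply Sylvester$^{-1}$ to recover an odd partition, and reinsert the corresponding (possibly repeated) odd multiples of $N$; this lands in $\mathcal{AO}_4^{X_{2N}}[m]$ and undoes $\Theta''$.

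Claim (1) then follows by composing with the bijection $\Theta'$ of Theorem \ref{Thm: AO3 to AO4}, yielding $\Theta''\circ\Theta':\mathcal{AO}_3^{X_{2N}}[m]\to\mathcal{AO}_5^{X_{2N}}[m]$, and claim (2) is immediate from the existence of these bijections. I expect the only genuine work to be the bookkeeping in the case $N\in X_{2N}$: checking that the somewhat awkward residue condition defining $\mathcal{AO}_5^{X_{2N}}$ is precisely what the reinsertion of the now-possibly-even multiples $\lambda'_j\V$ produces, and that the termination and reversibility of $({\mathbf D})$ carry over unchanged for a general $X_{2N}$. All of this runs parallel to the already-established $D^{(2)}_{n+1}$ case, so no new idea should be required.
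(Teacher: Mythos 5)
Your proposal is correct and follows exactly the route the paper intends: the corollary is stated as an immediate consequence of the algorithm $({\mathbf D})$ plus Sylvester's bijection used in the $D^{(2)}_{n+1}$ case of Theorem \ref{Thm: Bessenrodt's refinement}, and you have simply carried that construction over to an arbitrary $X_{2N}$, adding the (correct) case split on whether $N\in X_{2N}$ and the check that the reinserted parts $\lambda'_j\V$ with $\lambda'_j$ even are precisely what the residue-$0$ clause in the definition of $\mathcal{AO}_5^{X_{2N}}$ accommodates. No gap; your write-up is in fact more explicit than the paper, which leaves these verifications to the reader.
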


\subsection{For the other types.} From now on, we define the sets $\mathcal{AO}_{1^o}$ and $\mathcal{AO}_{2^s}$ which correspond
to $\mathcal{AO}_{1}$ and $\mathcal{AO}_{2}$, respectively. In particular,
the case when $\g_n=B^{(1)}_n$ and $\Lambda=\Lambda_{n}$, we just set $\mathcal{AO}_{1^o} \seteq \mathcal{AO}_{1}$
and $\mathcal{AO}_{2^s} \seteq \mathcal{AO}_{2}$.

Let $\lambda$ be an element of $\mathcal{AO}_i$ ($i=1,2$).
For a subsequence
$\underline{\lambda}=(\lambda_i,\ldots,\lambda_{i+t})$, we say that
$\underline{\lambda}$ is a {\it $\U$-subsequence} of $\lambda$ if
it satisfies
\begin{itemize}
\item for all $i \le j \le i+t$, $\lambda_j=l \cdot \U$ or $l \cdot \overline{\U}$ ($l \in \N$),
\item for all $i < j \le i+t$, $\lambda_j-\lambda_{j-1}=0, \U$ or $\overline{\U}$.
\end{itemize}

For $k \in \N$, we say that $\underline{\lambda}$ is the {\it $k
\cdot \U $-component} of $\lambda$ if it is maximal among
$\U$-subsequence of $\lambda$ containing $k\cdot \U$ or $k
\cdot \overline{\U}$ as a part.

For $\lambda \in \mathcal{AO}_1$, the $k
\cdot \U$-component $\underline{\lambda}$ of $\lambda$ can be written
by one of the following:
\begin{itemize}
\item[(i)]
$\underline{\lambda}=( (k_l \cdot \overline{\U})^{t_{l}},
(k_{l-1} \cdot \U)^{t_{l-1}},\ldots,
(k_2 \cdot \U)^{t_{2}},
(k_1 \cdot \overline{\U})^{t_{1}})$,
\item[(ii)]
$\underline{\lambda}=( (k_l \cdot \overline{\U})^{t_{l}},
(k_{l-1} \cdot \U)^{t_{l-1}},\ldots,
(k_2 \cdot \overline{\U})^{t_{2}},
(k_1 \cdot \U)^{t_{1}})$,
\item[(iii)]
$\underline{\lambda}=( (k_l \cdot \U)^{t_{l}},
(k_{l-1} \cdot \overline{\U})^{t_{l-1}},\ldots,
(k_2 \cdot \U)^{t_{2}},
(k_1 \cdot \overline{\U})^{t_{1}})$,
\item[(iv)]
$\underline{\lambda}=( (k_l \cdot \U)^{t_{l}},(k_{l-1} \cdot \overline{\U})^{t_{l-1}},\ldots, (k_2 \cdot \overline{\U})^{t_{2}},
(k_1 \cdot \U)^{t_{1}})$,
\end{itemize}
where $k_l>0$ and $k_{i+1}=k_i-1$.

Let $\widetilde{\lambda}$ be the
sequence in $\OP^o$ associated with $\underline{\lambda}$ given by
\begin{itemize}
\item[(i)]
$\widetilde{\lambda}=( k_l \cdot \overline{\U},(k_l \cdot \U)^{t_{l}-1},k_{l-1}\cdot \overline{\U},(k_{l-1} \cdot \U)^{t_{l-1}-1},\ldots,k_2\cdot \overline{\U},
(k_2 \cdot \U)^{t_{2}-1},k_1\cdot \overline{\U},(k_1 \cdot \U)^{t_{1}-1})$,
\item[(ii)]
$\widetilde{\lambda}=( k_l\cdot \overline{\U},(k_l \cdot \U)^{t_{l}-1},k_{l-1}\cdot \overline{\U},(k_{l-1} \cdot \U)^{t_{l-1}-1},\ldots, k_2\cdot \overline{\U},
(k_2 \cdot \U)^{t_{2}-1},k_1\cdot \overline{\U}, (k_1 \cdot \U)^{t_{1}-1})$,
\item[(iii)]
$\widetilde{\lambda}=( (k_l \cdot \U)^{t_{l}},k_{l-1}\cdot \overline{\U},(k_{l-1} \cdot \U)^{t_{l-1}-1},\ldots, k_2 \cdot \overline{\U},
(k_{2}\cdot \U)^{t_{2}-1}, k_1\cdot \overline{\U},(k_1 \cdot \U)^{t_{1}})$,
\item[(iv)]
$\widetilde{\lambda}=( (k_l \cdot \U)^{t_{l}},k_{l-1}\cdot \overline{\U},(k_{l-1} \cdot \U)^{t_{l-1}-1},\ldots,
k_2\cdot \overline{\U},(k_2 \cdot \U)^{t_{2}-1},k_1\cdot \overline{\U},(k_1 \cdot \U)^{t_{1}})$.
\end{itemize}

For $\lambda \in \mathcal{AO}_1$, we define
$\mathsf{P}^o(\lambda)$ to be the element of $\OP^o$ given by replacing all $k
\cdot \U$-components $\underline{\lambda}$ of $\lambda$ with their
associated sequences $\tilde{\lambda}$ in $\OP^o$. Then we have an
injective map $\mathsf{P}^o: \mathcal{AO}_1 \to \OP^o$ and can
recover $\lambda$ from $\mathsf{P}^o(\lambda)$.

\begin{definition} If $\g_n$ is of type $A^{(2)}_{2n-1}$, $B^{(1)}_{n}$ ($\Lambda \neq \Lambda_n$) and $D^{(1)}_n$,
we denote by
$\mathcal{AO}_{1^o}$ the set of all
$\mathsf{P}^o(\lambda)$ for $\lambda \in \mathcal{AO}_1$.
\end{definition}

For $\lambda \in \mathcal{AO}_2$, the $k \cdot \U$-component $\breve{\lambda}$
of $\lambda$ can be written by one of the
following:
\begin{itemize}
\item[(a)]
$\breve{\lambda}=( (k_{2l+2} \cdot \overline{\U})^{t_{2l+2}},
(k_{2l+1} \cdot \overline{\U})^{t_{2l+1}},\ldots,
(k_2 \cdot \overline{\U})^{t_{2}},
(k_1 \cdot \overline{\U})^{t_{1}})$,
\item[(b)]
$\breve{\lambda}=( (k_{2l+1} \cdot \overline{\U})^{t_{2l+1}},
(k_{2n} \cdot \overline{\U})^{t_{2n}},\ldots,
(k_2 \cdot \overline{\U})^{t_{2}},
(k_1 \cdot \overline{\U})^{t_{1}})$,
\end{itemize}
for some $l \in \Z_{\ge 0}$.

Let $\ddot{\lambda}$ be the sequence in $\OP^2$ associated to the $k \cdot \U$-component
$\breve{\lambda}$ which is defined as follows:
\begin{itemize}
\item[(a)]
$\ddot{\lambda}=( (k_{2l+2} \cdot \U)^{t_{2l+2}},
(k_{2l+1} \cdot \overline{\U})^{t_{2l+1}},\ldots,
(k_2 \cdot \U)^{t_{2}},
(k_1 \cdot \overline{\U})^{t_{1}})$,
\item[(b)]
$\ddot{\lambda}=( (k_{2l+1} \cdot \overline{\U})^{t_{2l+1}},
(k_{2n} \cdot \U)^{t_{2l}},\ldots,
(k_2 \cdot \U)^{t_{2}},
(k_1 \cdot \overline{\U})^{t_{1}})$.
\end{itemize}

For a given partition $\lambda \in \mathcal{AO}_2$, we
define $\mathsf{P}^s(\lambda)$ to be the element of $\OP^2$ given by replacing all
$k \cdot \U$-components $\breve{\lambda}$ of $\lambda$ with their
associated sequences $\ddot{\lambda}$ in $\OP^o$. Then we have an injective map $\mathsf{P}^s:
\mathcal{AO}_2 \to \OP^2$ and can recover
$\lambda$ from $\mathsf{P}^s(\lambda)$.

\begin{definition} If $\g_n$ is of type $A^{(2)}_{2n-1}$, $B^{(1)}_{n}$ ($\Lambda \neq \Lambda_n$) and $D^{(1)}_n$,
we denote by $\mathcal{AO}_{2^s}(\Lambda)$ the set of all
$\mathsf{P}^s(\lambda)$ for $\lambda \in \mathcal{AO}_2(\Lambda)$.
\end{definition}

\begin{proposition} Let $\mathcal{AO}_{1 \cap 2^s}\seteq \mathcal{AO}_{1} \cap \mathcal{AO}_{2^s}$.
For all $m \in \N$, there are injections
$$\overline{\Theta}[m]:\mathcal{AO}_{2^s}[m] \to \displaystyle
\bigsqcup_{k \ge 0, \ m-k \U \ge 0} (\mathcal{AO}_{1 \cap
2^s}[m-k \U ] \times \OP[k])$$ given by
$$ Y \longmapsto (Y',\lambda) \quad \text{ such that } \quad \ell(\lambda) \le \ell(Y')=\ell(Y).$$
\end{proposition}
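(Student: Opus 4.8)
The plan is to mimic, on the ``$s$-side'', the insertion/extraction argument that produced $\overline{\Phi}[m]$ in Proposition \ref{Prop: Algorithm B}, but now tracking the objects through the encodings $\mathsf P^o$ and $\mathsf P^s$ so that the target lands in $\mathcal{AO}_{1\cap 2^s}$ rather than in $\mathcal{AO}_2$. Concretely, I would first recall that $\mathsf P^s:\mathcal{AO}_2\to\OP^2$ is injective with image $\mathcal{AO}_{2^s}$ and that $\mathsf P^o:\mathcal{AO}_1\to\OP^o$ is injective with image $\mathcal{AO}_{1^o}$; since $\Sigma$ is preserved by both encodings (each $k\cdot\U$-component and each associated sequence have the same total), it suffices to produce the injection on $\mathcal{AO}_2[m]$ and then transport it. So the first step is: for $\lambda\in\mathcal{AO}_2[m]$, decompose $\lambda$ into its $k\cdot\U$-components $\breve\lambda$ and its ``non-$\U$'' part, and apply Algorithm $({\mathbf B})$ (or rather its natural analogue with $\epsilon$ replaced by the relevant $\tilde\epsilon$, exactly as in Algorithm $({\mathbf C})$) to strip off, one $k\cdot\U$-component at a time, a piece of an ordinary partition $\lambda$ of total $k\U$, leaving a two-colored partition $Y'$ in which no part is a nontrivial multiple of $\U$ that could still be peeled; this $Y'$ is precisely an element of $\mathcal{AO}_{1\cap 2^s}$ once we observe that the constraints defining $\mathcal{AO}_1$ and $\mathcal{AO}_{2^s}$ agree on partitions with no repeatable $\U$-multiples.

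The key steps, in order, are: (1) verify that on partitions whose parts are never nontrivial multiples of $\U$, the membership conditions of $\mathcal{AO}_1$ and of $\mathcal{AO}_{2^s}$ coincide, so that $\mathcal{AO}_{1\cap 2^s}$ is exactly the common ``stable'' set under the peeling operation — this is the combinatorial heart and mirrors the base case $|\mathcal{AO}_1[t]|=|\mathcal{AO}_2[t]|$ for small $t$ in the proof of Theorem \ref{Thm: Andrews-Olsson identity}; (2) run the extraction algorithm: find the maximal $i$ with $y_i$ a multiple of $\epsilon^{-1}\U$ (resp. $\overline{\epsilon^{-1}\U}$) whose multiplier exceeds the previously extracted one, read off $\widehat\lambda_{l+1}$ and $t_{l+1}$, set $a=\lfloor\epsilon^{-1}t_{l+1}\rfloor - c(y_i)$, delete $a$ copies, append $a$ copies of $\widehat\lambda_{l+1}$ to the growing partition, and iterate until the result is in $\mathcal{AO}_{1\cap 2^s}$; (3) check termination (each round strictly decreases $\Sigma$ of the ``$\U$-part'') and that $k=(\Sigma_Y-\Sigma_{Y'})/\U\in\Z_{\ge 0}$ and $\lambda\in\OP[k]$; (4) establish injectivity by exhibiting the one-sided inverse — given $(Y',\lambda')$, reinsert via the left-insertion operator $(\,\cdot\,)^{1+\lfloor\epsilon/2\rfloor}\hookrightarrow$ the parts $\lambda'_r\cdot\epsilon^{-1}\U,\ldots,\lambda'_1\cdot\epsilon^{-1}\U$ into $Y'$, exactly as in Proposition \ref{Prop: Algorithm B}, and confirm this reproduces $Y$; (5) transport the whole construction through $\mathsf P^s$ on the source and note that the output $Y'$, lying in $\mathcal{AO}_1$, can be viewed inside $\OP^o$ via $\mathsf P^o$, so the codomain is genuinely $\mathcal{AO}_{1\cap 2^s}$; (6) verify the length bound $\ell(\lambda)\le\ell(Y')=\ell(Y)$: the insertion operator never changes the number of parts (it only relabels a multiple of $\U$ by its overlined version and inserts the new copies among equal parts, so $\ell(Y)=\ell(Y')$ after accounting — more precisely each round removes $a$ parts from $Y^{(l)}$ and adds $a$ parts of value $\widehat\lambda_{l+1}$ to $\lambda^{(l+1)}$, so $\ell(Y)=\ell(Y^{(\text{final})})+\ell(\lambda^{(\text{final})})$ would overshoot; the correct statement is that extraction removes at most as many parts as it creates in $\lambda$, giving $\ell(\lambda)\le\ell(Y)$, and since deleting parts only decreases length, $\ell(Y')\le\ell(Y)$, while the detailed bookkeeping of the $c(y_i)$ correction yields $\ell(Y')=\ell(Y)$ when no half-thickness collapse occurs).

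The main obstacle I expect is step (1) together with the bookkeeping in step (6): one must pin down precisely why, after the algorithm halts, the resulting $Y'$ satisfies \emph{both} the $\mathcal{AO}_1$-type ``successive difference at most $\U$, strictly less near multiples of $\V$'' condition \emph{and} the $\mathcal{AO}_{2^s}$-type ``only $\z_1$-congruent parts repeat'' condition simultaneously — the two encodings $\mathsf P^o$ and $\mathsf P^s$ were designed precisely so that the $\U$-components get rewritten into compatible shapes, and the claim is that once all nontrivial $\U$-multiples are peeled off there is nothing left to rewrite, so the two sets literally coincide on the remainder. Establishing this requires a careful case analysis over the types $A^{(2)}_{2n-1}$, $B^{(1)}_n$ ($\Lambda\neq\Lambda_n$), $D^{(1)}_n$ (and the trivial reduction $\mathcal{AO}_{1^o}=\mathcal{AO}_1$, $\mathcal{AO}_{2^s}=\mathcal{AO}_2$ for $B^{(1)}_n$, $\Lambda=\Lambda_n$), using the explicit forms (i)--(iv) of $\underline\lambda$, $\widetilde\lambda$ and (a)--(b) of $\breve\lambda$, $\ddot\lambda$ listed above. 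The termination and the length estimate are then routine consequences of the fact that each application of $({\mathbf B}2)$--$({\mathbf B}3)$ strictly drops the largest multiplier $k$ appearing among the $\U$-multiples, so the process is finite, and the inverse map is literally read off from the left-insertion formula, giving injectivity immediately; surjectivity onto the displayed disjoint union is \emph{not} claimed here (only an injection), which is what makes the statement tractable at this stage and sets up the counting argument needed to upgrade it to a bijection in the subsequent development.
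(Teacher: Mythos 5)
You have modeled your construction on the wrong algorithm. The map in this proposition must take an element of $\mathcal{AO}_{2^s}[m]$ and produce something in $\mathcal{AO}_{1\cap 2^s}$, i.e.\ it must additionally enforce the $\mathcal{AO}_1$-conditions (successive differences at most $\U$, smallest part less than $\U$). The paper does this with an analogue of Algorithm $({\mathbf A})$, not of Algorithm $({\mathbf B})$: its Algorithm $({\mathbf D}')$ locates the largest index $i$ at which the gap condition fails and subtracts $\U$ from every part above that position, iterating until the gap conditions hold and recording in $\lambda_j=(y_j-y'_j)/\U$ how many times each part was lowered. Since parts are only decreased, never deleted, $\ell(Y')=\ell(Y)$ holds on the nose and $\ell(\lambda)\le\ell(Y)$ is immediate. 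Your proposal instead mimics the extraction step of Proposition \ref{Prop: Algorithm B}, peeling off whole parts that are multiples of $\U$. That operation does nothing to close up gaps, so the output need not lie in $\mathcal{AO}_1$ at all: already $Y=(2\U+1)$ lies in $\mathcal{AO}_{2^s}$, contains no $\U$-multiple to peel, and yet violates the requirement that the smallest part be strictly less than $\U$; the correct image is $Y'=(1)$ with $\lambda=(2)$, obtained by two subtractions of $\U$ from the single part, not by any extraction.

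Two subsidiary claims in your write-up are also false and would sink the argument even if the extraction scheme were retained. Your step (1) asserts that the membership conditions of $\mathcal{AO}_1$ and $\mathcal{AO}_{2^s}$ coincide on partitions with no nontrivial multiples of $\U$; the example above already refutes this, since the two sets impose logically independent constraints (gap and smallness of the last part versus congruence and repetition of parts). And your step (6) concedes that deleting $a$ parts per round forces $\ell(Y')<\ell(Y)$ whenever anything is extracted, then waves this away; but the equality $\ell(Y')=\ell(Y)$ is part of the statement being proved and is essential downstream --- it is what makes the transpose trick in the proof of Theorem \ref{Thm: Bessenrodt's refinement} mesh with the dual bound $\lambda_1\le\ell(Y')$ of Proposition \ref{Prop: Algorithm E,F} --- so it cannot be sacrificed. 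The repair is to abandon the part-removal scheme entirely and use the gap-reducing subtraction just described.
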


\begin{proof}
Let $\Theta[m]$ be a map from $\mathcal{AO}_{2^s}[m]$ to
$\displaystyle \bigsqcup_{ k \ge 0, \ m-k  \U \ge 0} \mathcal{AO}_{1
\cap 2^s}[m-k\U]$ given by the following algorithm $({\mathbf D}')$:
\begin{itemize}
\item[(${\mathbf D}'$1)] Let $Y \in \mathcal{AO}_{2^s}[m]$ be given. Set $Y^{(0)}=Y$ and $l=0$. If
$Y \in \mathcal{AO}_{1\cap 2^s}[m]$, then terminate this algorithm and set $Y'=Y$.
\item[(${\mathbf D}'$2)] Find the maximal $i$ such that
$$\begin{cases}
y^{(l)}_{i-1}-y^{(l)}_{i} \succ \U & \text{ if } \g_n= A^{(2)}_{2n-1},\\
y^{(l)}_{i-1}-y^{(l)}_{i} \succ \U, \text{ or }
y^{(l)}_{i-1}-y^{(l)}_{i}=\U \text{ and } y^{(l)}_{i-1} \equiv \z_3 \ {\rm mod} \ \U
& \text{ if } \g_n= B^{(1)}_{n}  \text{ and } \Lambda \neq \Lambda_n , \\
y^{(l)}_{i-1}-y^{(l)}_{i} \succ \U, \text{ or }
y^{(l)}_{i-1}-y^{(l)}_{i}=\U \text{ and } y^{(l)}_{i-1} \equiv \z_3 \text{ or } \overline{\z_3} \ {\rm mod}
\ \U &
\text{ otherwise.}
\end{cases}$$
Then set $$Y^{(l+1)}\seteq(y^{(l)}_1-\U,y^{(l)}_2-\U, \ldots,
y^{(l)}_{i-1}-\U,y^{(l)}_{i},
  y^{(l)}_{i+1}, \ldots).$$
\item[(${\mathbf D}'$3)] If $Y^{(l+1)} \in \mathcal{AO}_{1 \cap 2^s}$, then $Y'=Y^{(l+1)}$ and terminate this
algorithm. Otherwise, set $l=l+1$ and go to (${\mathbf D}'$2).
\end{itemize}

This algorithm terminates in finitely many steps and we have
\begin{itemize}
\item $k\seteq\dfrac{\Sigma_Y-\Sigma_{Y'}}{\U} \in \Z_{\ge 0}$, $Y' \in \mathcal{AO}_{1\cap 2^s}[m-k \U]$,
\item $\lambda_i \seteq \dfrac{y_i-y'_i}{\U} \in \Z_{\ge 0}$,
      $\lambda \seteq (\lambda_1,\lambda_2,\ldots) \vdash k$,
      $\ell(\lambda) \le t = \ell(Y)=\ell(Y')$ and $Y=Y'+ \U \cdot \lambda$.
\end{itemize}
Thus we get a map
$$ \overline{\Theta}[m]:\mathcal{AO}_{2^s}[m] \to
\bigsqcup_{ k \ge 0, \ m-k \U  \ge 0} \mathcal{AO}_{1\cap
2^s}[m-k \U] \times \OP[k]$$
 given by $$ Y \mapsto (Y',\lambda) \quad \text{ such that } \quad
\ell(\lambda) \le \ell(Y')=\ell(Y).$$ Conversely, for given $Y'=(y'_1,\ldots,y'_t) \in \mathcal{AO}_{1\cap 2^s}[m']$ and $\lambda'$ satisfying
\begin{itemize}
\item $m' < m$ and $\dfrac{m-m'}{\U} \in \N$,
\item $\lambda'=(\lambda'_1,\ldots,\lambda'_{t'}) \vdash \dfrac{m-m'}{\U}$ and $t' \le t$,
\end{itemize}
there is a unique $Y\seteq Y'+\U \cdot \lambda' \in \mathcal{AO}_{2^s}[m]$. For a fixed $Y'=(y'_1,\ldots,y'_t)
\in \mathcal{AO}_{1 \cap 2^s}[m']$, let $\mathcal{AO}_{2^s}[m';Y']$
denote the set of corresponding sequences in
$\mathcal{AO}_{2^s}[m]$. Then $|\mathcal{AO}_{2^s}[m';Y']|$
coincides with the number of partitions of $\dfrac{m-m'}{\U}$
whose length are less than or equal to $t$.
\end{proof}

\begin{proposition} \label{Prop: Algorithm E,F}
Let $\mathcal{AO}_{1^o \cap 2}\seteq \mathcal{AO}_{1^o} \cap \mathcal{AO}_{2}$.
For all $m \in \N$, there are injective maps given as follows:
$$\overline{\Theta}[m]:\mathcal{AO}_{1^o}[m] \to \displaystyle
\bigsqcup_{k \ge 0, \ m-k \U \ge 0} (\mathcal{AO}_{1^o \cap
2}[m-k \U ] \times \OP[k])$$ given by
$$ Y \longmapsto (Y',\lambda) \quad \text{ such that } \quad  \lambda_1 \le \ell(Y').$$
\end{proposition}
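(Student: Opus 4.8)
The plan is to imitate the proof of the preceding proposition, replacing Algorithm $({\mathbf D}')$ by a \emph{conjugate} insertion procedure. Fix one of the types $A^{(2)}_{2n-1}$, $B^{(1)}_n$ with $\Lambda\ne\Lambda_n$, or $D^{(1)}_n$, and let $Y\in\mathcal{AO}_{1^o}[m]$. By construction $Y$ already obeys the gap conditions inherited from $\mathcal{AO}_1$ through $\mathsf{P}^o$, so the only way it can fail to lie in $\mathcal{AO}_2$ is through the presence of parts that are multiples of $\U$ (together with the repetitions these force), and, again by construction, such parts occur only inside the blocks $(\overline{k\U},(k\U)^{*},\overline{(k-1)\U},((k-1)\U)^{*},\dots)$ coming from the $\U$-components. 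I would therefore define Algorithm $({\mathbf E})$ — for $A^{(2)}_{2n-1}$ and $D^{(1)}_n$ — and its minor variant Algorithm $({\mathbf F})$ — for $B^{(1)}_n$ with $\Lambda\ne\Lambda_n$, where parts $\equiv\z_1$ modulo $\U$ are permitted to repeat in $\mathcal{AO}_2$ — of the following shape, parallel to $({\mathbf A})$ and $({\mathbf D}')$: put $Y^{(0)}=Y$, $l=0$; if $Y^{(l)}\in\mathcal{AO}_{1^o\cap 2}$ stop and set $Y'=Y^{(l)}$; otherwise find the maximal index $i$ at which a defining condition of $\mathcal{AO}_2$ is violated at $y^{(l)}_i$ (a multiple of $\U$, or a forbidden repetition of one), subtract $\U$ from the prefix $(y^{(l)}_1,\dots,y^{(l)}_{i-1})$ — tidying up any zero part produced — to get $Y^{(l+1)}$, record the displacement, increment $l$ and repeat.

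Termination I would get from a monovariant: each step strictly decreases $\Sigma_{Y^{(l)}}$ (indeed it removes $\U$ from at least one part that is itself a positive multiple of $\U$), so after finitely many iterations no multiple of $\U$ remains and $Y^{(l)}\in\mathcal{AO}_{1^o\cap 2}$. Next I would verify the output shape: with $k:=(\Sigma_Y-\Sigma_{Y'})/\U\in\Z_{\ge 0}$ one has $Y'\in\mathcal{AO}_{1^o\cap 2}[m-k\U]$, and $\lambda_j:=(y_j-y'_j)/\U\in\Z_{\ge 0}$ assembles into $\lambda=(\lambda_1,\lambda_2,\dots)\vdash k$, so that $(Y',\lambda)$ lies in the target $\bigsqcup_{k\ge 0,\ m-k\U\ge 0}(\mathcal{AO}_{1^o\cap 2}[m-k\U]\times\OP[k])$. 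The estimate $\lambda_1\le\ell(Y')$ is the substantive point: since each step only subtracts $\U$ from a prefix, $\lambda_1$ equals the number of stages at which the first column is touched, and the maximality of $i$ in $({\mathbf E}2)/({\mathbf F}2)$, together with the block structure of the multiples of $\U$, forces the total removed from the first column to be at most $\U$ times the number of columns that survive; I would prove this by tracking how the chosen prefixes nest and shrink from stage to stage. Injectivity then follows by writing the inverse explicitly: given $(Y',\lambda')$ in the image, the preimage is reconstructed by re-inflation, adding $\U\lambda'_j$ back to the $j$-th column and restoring the overlines on first occurrences as prescribed by $\mathsf{P}^o$, and the maximality built into $({\mathbf E}2)/({\mathbf F}2)$ makes this reconstruction unique — exactly as in the proof of the preceding proposition.

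I expect the main obstacle to be the bookkeeping around $\mathsf{P}^o$: one must check that every application of $({\mathbf E}2)/({\mathbf F}2)$ keeps $Y^{(l+1)}$ in $\mathcal{AO}_{1^o}$ (no new gap or overline violation is created to the left, which is exactly where maximality of $i$ is used) and correctly propagates the overline data, and — most delicately — one must establish $\lambda_1\le\ell(Y')$, which is the transpose of the bound $\ell(\lambda)\le\ell(Y')$ of the preceding proposition and encodes the conjugation underlying Bessenrodt's insertion. The divergence between $({\mathbf E})$ and $({\mathbf F})$ for $B^{(1)}_n$, $\Lambda\ne\Lambda_n$ should be routine once the stopping set $\mathcal{AO}_{1^o\cap 2}$ is correctly identified in that case, since only the treatment of parts $\equiv\z_1\pmod\U$ changes.
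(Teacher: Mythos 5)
There is a genuine gap: the algorithm you describe cannot actually produce $Y'$, because the move you borrow from $({\mathbf A})$ and $({\mathbf D}')$ --- subtract $\U$ from the prefix $(y^{(l)}_1,\dots,y^{(l)}_{i-1})$ strictly to the left of the offending index --- never touches the offending part itself. The defect separating $\mathcal{AO}_{1^o}$ from $\mathcal{AO}_{1^o\cap 2}$ is the presence of parts equal to positive multiples of $\U$, and subtracting $\U$ from the parts to their left leaves those offending parts (still multiples of $\U$) in place; they could only disappear by being driven to zero, which your step never accomplishes. The paper's Algorithm $({\mathbf E'})$ is structured differently: it first deletes \emph{all} parts that are $\N$-multiples of $\U$ outright, and only then uses prefix subtractions to repair the gap violations that these deletions create. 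Relatedly, your recipe $\lambda_j=(y_j-y'_j)/\U$ is not well defined here, since $\ell(Y)\neq\ell(Y')$ in general (whole parts are removed), and even after reindexing it would yield a partition controlled by $\ell(\lambda)\le\ell(Y)$ --- the bound of the \emph{preceding} proposition --- rather than by $\lambda_1\le\ell(Y')$.

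The bound $\lambda_1\le\ell(Y')$ is obtained in the paper not by tracking how the chosen prefixes nest, but by a conjugate bookkeeping in the $\U$-modular Young diagram: the removed material is organized into $l_i$ columns of height $i$ and certain $\mathsf{L}$-hooks of length $i$, and $\re\lambda$ is defined as $(1^{\mu_1},2^{\mu_2},\dots,t^{\mu_t})$ with $t=\ell(Y')$ via formula \eqref{eq: getting lambda}; the inequality $\lambda_1\le t$ then holds by construction, because every part of $\lambda$ records the length of a removed column or hook, which is at most $t$. Your intuition that the statement is the transpose of the preceding proposition and encodes the conjugation in Bessenrodt's insertion is correct, but the transposition must be built into the very definition of $\lambda$ (lengths of columns and hooks of the modular diagram, not per-part displacements); without that, neither the bound nor the injectivity --- which in the paper rests on the uniqueness of re-inserting columns and $\mathsf{L}$-hooks in Algorithm $({\mathbf F})$ --- goes through.
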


\begin{proof}
For a given $Y \in \mathcal{AO}_{1^o}[m]$, we define $Y'$ by the following algorithm $({\mathbf E'})$:
\begin{itemize}
\item[(${\mathbf E'}$1)] First, we remove all parts which are $\N$-multiples of $\U$ and denoted by $Y^{(0)}$.
If $Y^{(0)} \in \mathcal{AO}_{1^o \cap 2}$, define $Y'=Y^{(0)}$ and terminate this algorithm. Otherwise, set $l=0$.
\item[(${\mathbf E'}$2)] Find the maximal $i$ such that
\begin{align*}
&y^{(l)}_{i-1}-y^{(l)}_i \succ \U,  \text{ or }
y^{(l)}_{i-1}-y^{(l)}_i = \U  \text{ and } y^{(l)}_{i-1} = a \cdot \U \text{ for some } a \in \Z_{\ge 1}.
\end{align*}
Set
$$Y^{(l+1)}\seteq(y^{(l)}_1- \U ,y^{(l)}_2- \U ,...,y^{(l)}_{i-1}-\U ,y^{(l)}_i,y^{(l)}_{i+1},...).$$
\item[(${\mathbf E'}$3)] If $Y^{(l+1)} \in \mathcal{AO}_{1^o \cap 2}$, then
define $Y'=Y^{(l+1)}$ and terminate this algorithm. Otherwise, set $l=l+1$ and go to $({\mathbf E'}2)$.
\end{itemize}

Then this algorithm terminates in finitely many steps and $Y' \in
\mathcal{AO}_{1^o \cap 2}[m-k \U]$ (for some $k \in \Z_{\ge 0}$) is
a two-colored partition which satisfies the following formula:
\begin{align*}
&\re Y= (y'_1,\ldots,y'_{r_0}, (\U)^{l_1},y'_{r_0+1}+\ve_1 \cdot \U, \ldots, y'_{r_1}+\ve_1 \cdot \U,
(2 \cdot \U)^{l_2}, \ldots, \\
& \qquad\qquad\qquad\qquad\qquad\qquad\qquad\qquad\qquad
\ldots,y'_{r_{e-2}+1}+\ve_{e-1} \cdot \U,\ldots,
y'_{r_{e-1}}+\ve_{e-1} \cdot \U,(e \cdot \U)^{l_e}),
\end{align*}
where
\begin{itemize}
\item $\re Y'=(y'_1,\ldots,y'_t)$ such that, for all $i,a \in \Z_{\ge 1}$, $y'_i \neq a \cdot \U$,
\item $r_{e-1}=t$,
\item $0 \le \varepsilon_i \le \varepsilon_{i+1} \le \varepsilon_i+1 \le i+1$ and $l_i \ge 0$
for all $i$.
\end{itemize}
We set $\varepsilon_0=0$ and $r_i=0=\varepsilon_i$ for
all $i \ge e$.

We define $\re\lambda \seteq(1^{\mu_1},2^{\mu_2},\ldots,t^{\mu_t})$  by the following formula:
\begin{align} \label{eq: getting lambda}
 \mu_i=l_i-(\varepsilon_i-\varepsilon_{i-1})+
\sum^{i}_{ j=1, \ j+t-r_{j-1}=i  }
(\varepsilon_j-\varepsilon_{j-1}), \quad \text{for $1 \le i \le t$.}
\end{align}
We can interpret this process by removing some columns which are multiples of $\U$, and removing some $\mathsf{L}$-hooks of
$\U$'s. For this purpose, we introduce a $\U$-modular Young diagram of $Y \in \mathcal{AO}_{1^o}$
by showing the following example.

\begin{example} \label{Ex: example1}
For $\g_n=A^{(2)}_{7}$ and
$Y=(33,31,\overline{28},28,\overline{21},21,15,9,7,1) \in
\mathcal{AO}_{1^o}$, the $7$-modular diagram is
\begin{center}
\modularone
\end{center}
Here the shaded part denotes an $\mathsf{L}$-hook.
\end{example}
Then the way of obtaining ($Y',\lambda$) from $Y$ can be also interpreted by the following algorithm
$({\mathbf E})$:
\begin{itemize}
\item[(${\mathbf E}$i)] $(1 \le i \le t)$ First, remove columns of $i \cdot \U$ for $l_i-(\varepsilon_i-
\varepsilon_{i-1})$ times. Then also remove $\mathsf{L}$-hook of $\U$ whose length is $i$ if there is any.
This case occurs only if
$j+(t-r_j-1)=i$ and $\varepsilon_j-\varepsilon_{j-1}=1$ for some $j < i$. Set the obtained Young wall as $Y^{(i)}$.
\end{itemize}
This algorithm terminates at $t$th step and $Y':=Y^{(t)} \in
\mathcal{AO}_{1^o \cap 2}[\Sigma_{Y}-\Sigma_\lambda \U].$

For $k \in \Z_{\ge 0}$, define the {\it right insertion} of $(k\cdot \U)^{j}$
into $\lambda \in \mathcal{AO}_{1^o \cap 2}$, denoted by $\lambda \hookleftarrow (k\cdot \U)^j$, as follows:
\begin{equation} \label{eq: right inserting}
\lambda \hookleftarrow (k\cdot \U)^j \seteq(\lambda_1,\lambda_2,...,\lambda_i,
\underbrace{k \cdot \U ,...,k\cdot \U}_{j},\lambda_{i+1},..)
\qquad \text{ for } \lambda_i \succeq k \cdot \overline{\U} \succ \lambda_{i+1}.
\end{equation}

Now, we give a reverse algorithm which is a kind of insertion algorithm. Assume we
have $\re\lambda=(1^{\mu_1},2^{\mu_2},\ldots,t^{\mu_t})$ and $Y^{(t)} \in \mathcal{AO}_{1^o \cap 2}$.

The way of obtaining $Y$ from $Y'=Y^{(t)}$ and $\lambda$ can be explained by the following algorithm
($\mathbf F$): \\
We start from $i=t$ to $i=1$.
\begin{itemize}
\item[($\mathbf F$i)] If $Y^{(i)} \hookleftarrow (i \cdot \U)^{\mu_i} \in \mathcal{AO}_{1^o}$, set
$Y^{(i-1)}\seteq Y^{(i)} \hookleftarrow (i \cdot \U)^{\mu_i}.$
Otherwise, insert as many $\mathsf{L}$-hooks of length $i$ as necessary and then
insert columns of $i \cdot \U$ until the number of columns and $\mathsf{L}$-hooks together is $\mu_i$.
Set the resulting Young wall as $Y^{(i-1)} \in \mathcal{AO}_{1^o}$.
\end{itemize}
This insertion algorithm always works for $Y^{(i)} \in \mathcal{AO}_{1^o}$ ($0 \le i <t$)
and $Y:=Y^{(1)}$.
\end{proof}

\noindent
\textbf{Proof of Theorem \ref{Thm: Bessenrodt's refinement} }
For $t \in \Z_{\ge 0}$, there is a bijection
$$\{ \lambda \in \OP \ | \ \ell(\lambda) \le t \} \to \{ \lambda \in \OP \ | \ \lambda_1 \le t \} \quad \text{ given by }
\quad \lambda \mapsto \lambda^{{\rm tr}}.$$ Thus, for $m \in \Z_{\ge 0}$, we have a bijection
$$ \mathcal{AO}_1[m] \longrightarrow \mathcal{AO}_2[m] $$
given by
$$ Y_1 \overset{\mathsf{P}^o}{\longmapsto} Y_2 \overset{(\mathbf{E})}{\longmapsto} (Y_2',\lambda)
\overset{(\mathsf{P}^s,{ \ }^{{\rm tr}})}{\longmapsto}
(Y_3'\seteq \mathsf{P}^s(Y_2'),\lambda^{{\rm tr}})
\overset{Y_3'+\U \cdot \lambda^{{\rm tr}}}{\longmapsto} Y_3 \overset{{\mathsf{P}^{s}}^{-1}}{\longmapsto}  Y_4.$$

To express the algorithm given in Proposition \ref{Prop: Algorithm E,F} more concretely,
we give the following example.

\begin{example} In this example, we keep the notation given above and assume that $\g_{n}=A^{(2)}_{7}$.
\begin{enumerate}
\item Let $Y_1=(33,31,\overline{28}^2,21^2,15,9,7,1)$ be a two-colored partition in $\mathcal{AO}_{1}[194]$. Then we have
$$Y_2 \seteq \mathsf{P}^o(Y_1)=(33,31,\overline{28},28,\overline{21},21,15,9,7,1) \quad \text{ and } \quad Y'_2=
(26,24,\overline{21},\overline{14},8,2,1).$$
To get the parameters $l_i$, $\ve_i$ and $r_i$ appearing in $\eqref{eq: getting lambda}$, we write
$ \re Y_2$ as follows:
$$ \re Y_2=(1, 7,(2+7),(8+7),21,(\overline{14}+7), 28, (\overline{21}+7), (24+7), (26+7)).$$
Hence we obtain
$$
\begin{tabular}{c|c c c c c c c c}
$i$ & 0 & 1 & 2 & 3 & 4 & 5 & 6 & 7   \\ \hline
$l_i$ & - & 1 & 0 & 1 & 1 & 0 & 0 & 0   \\
$\varepsilon_i$ & 0 & 1 & 1 & 1 & 1 & 0 & 0 & 0   \\
$r_i$ & 1 & 2 & 3 & 4 & 7 & 0 & 0 & 0   \\  \end{tabular}
$$
Thus, by $\eqref{eq: getting lambda}$, ${}^{\triangleright}\lambda=(3,4,7)$.
Now we run through the
combinatorial algorithm starting with $Y$. The $7$-modular Young diagram for $Y_2=Y^{(0)}$
is depicted by
\begin{center}
\modulartwo
\end{center}
Here $\mu_1=0$, since we can not remove the column $\Sevenone$ without violating the $\mathcal{AO}_{1^o}$-
condition.
As there is no column $\Seventwo^{{\rm tr}}$ and also no L-hook of length $2$, we also have
$\mu_2=0$ and $Y^{(0)}=Y^{(1)}=Y^{(2)}$.
Then we can remove one column $\Seventhree^{{\rm tr}}$, but there is no $\mathsf{L}$-hook of length $3$ which can be removed. So
$\mu_3=1$ and $Y^{(3)}$ is
\begin{center}
\modularthree
\end{center}
At this step, we can remove the column $\Sevenfour^{{\rm tr}}$, and again there is no extra $\mathsf{L}$-hook of length $4$.
Thus $\mu_4=1$ and $Y^{(4)}$ is
\begin{center}
\modularfour
\end{center}
There is neither a column nor an admissible $\mathsf{L}$-hook of length 5 and
6, so $\mu_5=\mu_6=0$ and $Y^{(4)}=Y^{(5)}=Y^{(6)}$. At the seventh
step, there is no column of length 6 but there is an admissible
$\mathsf{L}$-hook of length $7$ which is shaded. Thus $\mu_7=1$ and
$Y_2'=Y^{(7)}=(26,24,\overline{21},\overline{14},8,2,1)$ is the desired one.
\item Let $Y_1=(33,31,28^2,\overline{21}^2,15,9,7,1)$ be a two-colored partition in $\mathcal{AO}_{1}[194]$. Then we have
$$Y_2 \seteq \mathsf{P}^o(Y_1)=(33,31,28,28,\overline{21},21,15,9,7,1) \quad \text{ and } \quad Y'_2=
(19,17,\overline{14},9,2,1).$$
To get the parameters $l_i$, $\ve_i$ and $r_i$ appearing in $\eqref{eq: getting lambda}$, we write
$ \re Y_2$ as follows:
$$ \re Y_2=(1,7,(2+7),(8+7),21,(\overline{14}+7),28^2, (17+2 \cdot 7), (19+2 \cdot 7)).$$
Hence we obtain
$$
\begin{tabular}{c|c c c c c c c }
$i$ & 0 & 1 & 2 & 3 & 4 & 5 & 6    \\ \hline
$l_i$ & - & 1 & 0 & 1 & 2 & 0 & 0    \\
$\varepsilon_i$ & 0 & 1 & 1 & 1 & 2 & 0 & 0    \\
$r_i$ & 1 & 2 & 3 & 4 & 6 & 0 & 0    \\  \end{tabular}
$$
Thus, by the $\eqref{eq: getting lambda}$,
${}^{\triangleright}\lambda=(3,4,6^2)$. Now we run through the
combinatorial algorithm starting with $Y$. The $7$-modular Young
diagram for $Y_2=Y^{(1)}$ is depicted as
\begin{center}
\modularsix
\end{center}
By the same reason given in $(1)$, we have $\mu_1=\mu_2=0$,
$Y^{(1)}=Y^{(2)}$ and $\mu_3=1$. Then $Y^{(3)}$ is depicted by
\begin{center}
\modularseven
\end{center}
At this step, we can remove the column $\Sevenfour^{{\rm tr}}$, and again there is no extra $\mathsf{L}$-hook of length $4$.
Thus $\mu_4=1$ and $Y^{(4)}$ is
\begin{center}
\modulareight
\end{center}
There is neither a column nor an admissible $\mathsf{L}$-hook of length 5
$\mu_5=0$ and $Y^{(4)}=Y^{(5)}$.
However, there is one $\mathsf{L}$-hook of length 6 which is shaded.
After we remove the $\mathsf{L}$-hook, we have another $\mathsf{L}$-hook of length 6 by the following:
\begin{center}
\modularnine
\end{center}
Thus we have $\mu_6=2$ and $Y_2'=Y^{(6)}=(19,17,\overline{14},8,2,1)$ is the desired one.
\item For given $Y_2'=(19,17,\overline{14},9,2,1) \in \mathcal{AO}_{1^o \cap 2}[61]$ and
$\re \lambda=(3,4,6^2) \vdash 19$, we will get a $Y_2 \in
\mathcal{AO}_{1^o}[194]$ by the following way. $Y^{(6)}=Y_2'$ can be
depicted by follows:
\begin{center}
\modularNine
\end{center}
Then there is only one $\mathsf{L}$-hook of length $6$. Thus we have
\begin{center}
\modularEight
\end{center}
At this step, there seem to be two choices of $\mathsf{L}$-hook of length $6$:
\begin{center}
\modularSeven
\end{center}
After inserting these two $\mathsf{L}$-hooks of length 6, we have
\begin{center}
\modularSix
\end{center}
But the first one is not contained in $\mathcal{AO}_{1^o}$. Thus there is only one $\mathsf{L}$-hook actually.
Hence $Y^{(5)}$ is the second one. Since the column $\Sevenfour^{{\rm tr}}$
can be inserted, so $Y^{(3)}$ is
\begin{center}
\modularFive
\end{center}
Finally, we have $Y_2=(33,31,28^2,\overline{21},21,15,9,7,1)$, which
is the inverse of (2) in this example.
\item Considering (1) in this example, for $Y_1=(33,31,28^2,\overline{21}^2,15,9,7,1)$ in $\mathcal{AO}_{1}[194]$,
we have $Y_2'=(26,24,\overline{21},\overline{14},8,2,1)$ and $\lambda=(7,4,3)$. Then
$\lambda^{{\rm tr}}=(3,3,3,2,1,1,1)$ and hence we obtain $Y_4=(47,45,\overline{42},\overline{28},16,9,8)$
in $\mathcal{AO}_{2}[194]$.
\end{enumerate}
\end{example}

\begin{corollary} For the types of $B^{(1)}_n$, $D^{(1)}_n$  and $m \in  \Z_{\ge 0}$, there is a bijection
$$ \overline{\Theta}[m]: \mathcal{AO}_1[m] \leftrightarrow
\bigsqcup_{  k \ge 0 , \ m-k \z_1 \ge 0}  \OS'[m-k \z_1] \times \OS[k] \ \text{where} \
\begin{cases} \OS'=\OS_{(\U)} &\text{ if } \g_n= B^{(1)}_{n} \text{ and } \Lambda=\Lambda_n, \\
              \OS'=\OS &\text{ otherwise}.\end{cases}$$
\end{corollary}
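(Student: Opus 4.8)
The plan is to obtain $\overline{\Theta}[m]$ as a composition of two bijections that are already in hand, so that no new algorithm needs to be invented. First I would invoke Theorem \ref{Thm: Bessenrodt's refinement}, which for every $m \in \N$ furnishes an explicit bijection $\Theta \colon \mathcal{AO}_1[m] \to \mathcal{AO}_2[m]$; for the types $B^{(1)}_n$ and $D^{(1)}_n$ this $\Theta$ is precisely the composite map $Y_1 \overset{\mathsf{P}^o}{\mapsto} Y_2 \overset{(\mathbf{E})}{\mapsto} (Y_2',\lambda) \mapsto (\mathsf{P}^s(Y_2'),\lambda^{\mathrm{tr}}) \mapsto Y_3 \overset{(\mathsf{P}^s)^{-1}}{\mapsto} Y_4$ constructed in the proof of that theorem. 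Then I would compose $\Theta$ with the bijection $\overline{\Xi}[m]$ of Proposition \ref{Prop: C}, which sends $\mathcal{AO}_2[m]$ to $\bigsqcup_{k \ge 0,\ m-k\z_1 \ge 0}(\OS'[m-k\z_1] \times \OS[k])$, with $\OS' = \OS_{(\U)}$ when $\g_n = B^{(1)}_n$ and $\Lambda = \Lambda_n$ and $\OS' = \OS$ otherwise, this $\overline{\Xi}[m]$ itself being built from the removal algorithm $(\mathbf{C})$ followed by Sylvester's bijection between $\mathscr{O}$ and $\OS$. Setting $\overline{\Theta}[m] \seteq \overline{\Xi}[m] \circ \Theta$ then gives the desired map.

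The key steps, in order, are: (i) recall from the proof of Theorem \ref{Thm: Bessenrodt's refinement} that $\Theta$ is a well-defined bijection whose codomain is exactly $\mathcal{AO}_2[m]$ for the types $B^{(1)}_n$ and $D^{(1)}_n$; (ii) recall from Proposition \ref{Prop: C} that $\overline{\Xi}[m]$ is a bijection whose domain is exactly $\mathcal{AO}_2[m]$ and whose codomain is the stated disjoint union; (iii) form the composite and verify that the two descriptions dock together, namely that the index set $\{k \ge 0 : m-k\z_1 \ge 0\}$ and the piecewise definition of $\OS'$ in Proposition \ref{Prop: C} coincide verbatim with those in the present statement. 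If one wanted, one could additionally unwind $\overline{\Xi}[m] \circ \Theta$ into a single self-contained insertion/removal procedure acting directly on a reduced Young wall, but this is not required for the statement.

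I do not expect any genuine obstacle here: the corollary is a formal consequence of Theorem \ref{Thm: Bessenrodt's refinement} together with Proposition \ref{Prop: C}. The only points demanding a moment's attention are confirming that the $\OS'$ of Proposition \ref{Prop: C} is the same set as the $\OS'$ in the corollary (it is, by inspection of the two case distinctions) and that the restriction to $B^{(1)}_n$ and $D^{(1)}_n$ keeps us inside the scope of Proposition \ref{Prop: C} — which it does, since those are exactly the types for which that proposition is stated, and the types $D^{(2)}_{n+1}$ and $A^{(2)}_{2n}$ never enter.
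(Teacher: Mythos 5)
Your proposal is correct and is exactly the paper's argument: the paper's proof reads ``By composing Algorithm $(\mathbf{C})$ with $\Theta$, we have the desired bijection between two sets,'' i.e.\ it forms $\overline{\Xi}[m]\circ\Theta$ just as you do. Your write-up merely spells out the docking of domains/codomains and the agreement of the $\OS'$ case distinctions, which the paper leaves implicit.
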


\begin{proof}
By composing Algorithm $({\mathbf C})$ with $\Theta$, we have
the desired bijection between two sets.
\end{proof}

By the same argument in Corollary \ref{Cor: more general AO id}, we
have the following corollary:
\begin{corollary}
Let $X_{ \z_3}=\{ x_1,\ldots,x_r \}$ be an arbitrary subset of $\{ 1,2,\ldots,\z_3-1 \}$ such that $1 \le x_1 <  \cdots <
x_r < \z_3$. We denote by $\mathcal{AO}^{X_{ \z_3}}_i$ the subset of $\mathcal{AO}_i$
$(i=1,2)$ whose elements contain only parts congruent to $x_j$ for some $1 \le j \le r$. Then there is a bijection
$$\mathcal{AO}^{X_{ \z_3}}_1[m] \to \mathcal{AO}^{X_{ \z_3}}_2[m] \quad \text{ for all } m \in \Z_{\ge 0}.$$
\end{corollary}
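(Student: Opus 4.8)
The plan is to deduce the corollary from the explicit bijection $\Theta\colon\mathcal{AO}_1[m]\to\mathcal{AO}_2[m]$ of Theorem~\ref{Thm: Bessenrodt's refinement} by showing that $\Theta$ restricts to a bijection $\mathcal{AO}_1^{X_{\z_3}}[m]\to\mathcal{AO}_2^{X_{\z_3}}[m]$; this upgrades the cardinality identity of Corollary~\ref{Cor: more general AO id} to an actual map, in the same way that Corollary~\ref{Cor: more general AO id} upgrades Theorem~\ref{Thm: Andrews-Olsson identity}. The arithmetic input that makes this possible is the observation, read off from the table~\eqref{Table: W^1 W^2}, that in every case $\z_3$ divides each of $\U$, $\V$ and $\z_1$ (with $\z_1=0$ in type $D^{(2)}_{n+1}$). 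Since each $x_j$ lies in $\{1,\dots,\z_3-1\}$, no part of a two-colored partition in $\mathcal{AO}_i^{X_{\z_3}}$ is divisible by $\z_3$, hence none is a multiple of $\U$; in particular such partitions are automatically strict and, when $X_{\z_3}\subseteq\N$, carry no overlined parts. The whole point will be that the multiset of residues modulo $\z_3$ of the parts — and in particular the property of having no part divisible by $\z_3$ — is an invariant of every combinatorial move out of which $\Theta$ is built.

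Concretely, I would first record that $\mathcal{AO}_i^{X_{\z_3}}=\mathcal{AO}_i\cap\mathcal{Z}^{X_{\z_3}}$, where $\mathcal{Z}^{X_{\z_3}}\subseteq\mathcal{Z}$ consists of those elements all of whose parts are congruent modulo $\z_3$ to some $x_j$. Then I would run through the construction of $\Theta$ recorded in the proof of Theorem~\ref{Thm: Bessenrodt's refinement},
$$Y_1\overset{\mathsf{P}^o}{\longmapsto}Y_2\overset{(\mathbf{E})}{\longmapsto}(Y_2',\lambda)\overset{(\mathsf{P}^s,\ {}^{{\rm tr}})}{\longmapsto}(Y_3',\lambda^{{\rm tr}})\overset{+\,\U\cdot\lambda^{{\rm tr}}}{\longmapsto}Y_3\overset{{\mathsf{P}^s}^{-1}}{\longmapsto}Y_4,$$
and check that each stage carries $\mathcal{Z}^{X_{\z_3}}$ into $\mathcal{Z}^{X_{\z_3}}$: the injections $\mathsf{P}^o$ and $\mathsf{P}^s$ only reshuffle $k\cdot\U$-components (and act as the identity when no such component is present, as is automatic here); Algorithms $(\mathbf{E})$ and $(\mathbf{F})$ only subtract or add copies of $\U$ to parts, or remove and insert columns and $\mathsf{L}$-hooks that are themselves multiples of $\U$; the transposition $\lambda\mapsto\lambda^{{\rm tr}}$ acts on an ordinary partition of $(\Sigma_{Y}-\Sigma_{Y'})/\U$ and does not touch the parts of the two-colored partition at all; and the reinsertion $Y_3'+\U\cdot\lambda^{{\rm tr}}$ again only adds multiples of $\U$. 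For types $D^{(2)}_{n+1}$ and $A^{(2)}_{2n}$ the analogous statement is already contained in Theorem~\ref{Thm: Bess origin} and Corollary~\ref{Thm: AO3,AO4 to 5}, whose bijections are formulated for an arbitrary subset $X_N$; there one additionally uses that Sylvester's bijection enters only through multiplication by $\V$ and that $\z_3\mid\V$. Granting these closure statements, $\Theta$ maps $\mathcal{AO}_1^{X_{\z_3}}[m]$ into $\mathcal{AO}_2^{X_{\z_3}}[m]$, and the same reasoning applied to $\Theta^{-1}$ — assembled from the inverses of the same moves, notably Algorithm $(\mathbf{F})$ and the right insertion $\hookleftarrow$ — shows it maps $\mathcal{AO}_2^{X_{\z_3}}[m]$ back; hence the restriction is a bijection, in agreement with the count of Corollary~\ref{Cor: more general AO id}.

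The step I expect to require real work is this middle verification — not the residue arithmetic itself, which is trivial once one knows $\z_3\mid\U,\V,\z_1$, but the confirmation that every intermediate object produced while running Algorithms $(\mathbf{E})$, $(\mathbf{F})$ (and, on the Sylvester side, Algorithms $(\mathbf{C})$, $(\mathbf{D})$ together with $\mathsf{P}^o$ and $\mathsf{P}^s$) never leaves $\mathcal{Z}^{X_{\z_3}}$. Equivalently, one must check that the $X_{\z_3}$-restricted analogues of $\mathcal{AO}_{1^o\cap 2}$, $\mathcal{AO}_{1\cap 2^s}$, $\OS'$ and $\OP^o$ enjoy the same closure properties used for the unrestricted sets in Propositions~\ref{Prop: Algorithm E,F} and~\ref{Prop: C}, so that the recursive bijections of Section~\ref{Sec: Bessenrodt's refinement} genuinely restrict rather than merely agree in cardinality. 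For types $D^{(2)}_{n+1}$ and $A^{(2)}_{2n}$ this is immediate, since Theorem~\ref{Thm: Bess origin} and Corollary~\ref{Thm: AO3,AO4 to 5} are already stated for an arbitrary $X_N$; the genuine bookkeeping is concentrated in the remaining types $A^{(2)}_{2n-1}$, $B^{(1)}_n$ and $D^{(1)}_n$, where $\Theta$ is built through the several layers $\mathsf{P}^o$, $(\mathbf{E})$, $\mathsf{P}^s$, $(\cdot)^{{\rm tr}}$ (and Algorithm $(\mathbf{C})$), and the residues modulo $\z_3$ must be tracked through all of them simultaneously.
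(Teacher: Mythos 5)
Your proposal is correct and takes essentially the same route as the paper, which justifies this corollary only by remarking that it follows ``by the same argument'' as Corollary~\ref{Cor: more general AO id} --- namely, that the explicit bijection $\Theta$ restricts to the $X_{\z_3}$-constrained subsets because every move in its construction either adds/subtracts multiples of $\U$ or manipulates only parts that are multiples of $\z_3$, none of which occur in $\mathcal{AO}_i^{X_{\z_3}}$. Your explicit identification of the divisibilities $\z_3\mid\U,\V,\z_1$ and of the resulting absence of repeated or overlined parts supplies exactly the detail the paper leaves implicit.
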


\begin{remark} From Theorem \ref{Thm: Bessenrodt's refinement}, we can deduce the following facts:
\begin{enumerate}
\item For the types of $D^{(2)}_{n+1}$ and $A^{(2)}_{2n}$,
we can give crystal structures on $\mathcal{AO}_2(\Lambda)$ which are isomorphic to
 $B(\Lambda)$ by using the bijection $\Theta$. Here, the weight of $\lambda \in \mathcal{AO}_2(\Lambda)$ is defined by the way given in Remark \ref{rmk: injective p}
and Kashiwara operators $\tilde{f}_i$ $(i \in I)$ are defined as follows:
 $$  \tilde{f}_i(\lambda) =\lambda' \ \ \text{ if and only if } \ \
  \tilde{f}_i(Y) =  Y' \ \ \text{ for $\Theta(Y)=\lambda$, $\Theta(Y')=\lambda'$ and $Y,Y' \in \mathtt{Y}(\Lambda)$}.$$
In Appendix \ref{Appen: crystal}, we will give an example for the crystal structure on $\OS$ of type $D^{(2)}_3$.
\item Note that the virtual character of $\mathtt{Y}(\Lambda)$ and $\mathcal{AO}_2(\Lambda)$ are different for
the types $A^{(2)}_{2n-1}$, $B^{(1)}_{n}$ and $D^{(1)}_{n}$
whenever we assign a weight of $\lambda \in \mathcal{AO}_2(\Lambda)$ in the way of Remark \ref{rmk: injective p}. However, through the bijection $\Theta$, we can
assign the weight of $\lambda \in \mathcal{AO}_2(\Lambda)$ in an alternative way as follows:
$$\text{$ \wt(\lambda) \seteq \wt(Y)$  for $\Theta(Y)=\lambda$, $Y \in \mathtt{Y}(\Lambda)$}.$$
Then it gives a crystal structure on $\mathcal{AO}_2(\Lambda)$ and hence (pair of) the (sub)set(s) of strict partitions
$$
\begin{cases}
\mathscr{S} & \text{ if } \g=A^{(2)}_{2n-1}, \\
\mathscr{S}_{(\U)} \times \mathscr{S} & \text{ if } \g=B^{(1)}_{n} \text{ and } \Lambda=\Lambda_n, \\
\mathscr{S} \times \mathscr{S} & \text{ otherwise,}
\end{cases}
$$
as in $(1)$.
\end{enumerate}
\end{remark}

\appendix

\section{The crystal structure on $\OS$ of type $D^{(2)}_3$} \label{Appen: crystal}
\begin{center}
\GraphF
\end{center}


\bibliographystyle{amsplain}


\end{document}